\def\N{\mathbb{N}}
\def\R{\mathbb{R}}
\def\C{\mathbb{C}}
\def\cL{\mathcal{L}}
\def\PP{\mathbb{P}}
\def\EE{\mathbb{E}}
\def\cH{\mathcal{H}}
\def\loc{{\mathrm{loc}}}
\def\dive{\mbox{\rm div}\,}
\def\ran{\mathrm{ran}\,}
\def\diam{\mathrm{diam}\,}
\def\sm{\setminus}
\def\bel{\begin{equation}}
\def\eel{\end{equation}}
\def\beq{\begin{eqnarray*}}
\def\eeq{\end{eqnarray*}}
\newcommand{\sk}[2]{\langle #1,#2\rangle}
\newcommand{\n}[1]{\|#1\|}
\newcommand{\nn}[2]{\|#1\|_{#2}}
\def\sL{\mathcal{L}}
\def\eps{\varepsilon}
\def\ph{\varphi}
\def\Om{\Omega}
\def\om{\omega}
\def\wt{\widetilde}
\def\la{\lambda}
\def\al{\alpha}
\def\ga{\gamma}
\def\si{\sigma}
\def\Si{\Sigma}
\def\del{\delta}
\def\ov{\overline}
\def\emb{\hookrightarrow}
\newtheorem{prop}{Proposition}[section]
\newtheorem{remark}[prop]{Remark}
\newtheorem{corollary}[prop]{Corollary}
\newtheorem{theorem}[prop]{Theorem}
\numberwithin{equation}{section}
\title{$H^\infty$-calculus for Stokes operators on rough and on unbounded domains}
\author{Peer Christian Kunstmann and Patrick Tolksdorf}
\address{\it Karlsruhe Institute of Technology (KIT),
Institute for Analysis\\
Englerstr. 2, D -- 76128 Karlsruhe, Germany\\
e-mail: peer.kunstmann@kit.edu, patrick.tolksdorf@kit.edu}
\date{}
\begin{document}

\begin{abstract}

In this article, we give an overview on known as well as new results on the boundedness of the $H^{\infty}$-calculus of the Stokes operator in rough as well as in unbounded (smoother) domains. We present a special case of an abstract comparison principle due to Kunstmann and Weis (\cite{KuW:Hinfty-Stokes}) that serves as the basis for all considerations. Subsequently, we show how this result can be applied to arrive at a bounded $H^{\infty}$-calculus for the Stokes operator. We sketch the proof for no slip boundary conditions in bounded Lipschitz domains which was given in~\cite{KuW:Hinfty-Stokes}. For unbounded domains this approach yields a shorter proof compared to previous arguments. Moreover, we further establish the boundedness of the $H^{\infty}$-calculus for the Stokes operator with Neumann type boundary conditions in bounded convex domains which is entirely new. 

\textbf{Mathematics Subject Classification (2020).} 35\,Q\,30, 76\,D\,07, 47\,A\,60, 47\,D\,06.

\textbf{Keywords.} Stokes operator, $H^\infty$-functional calculus, Lipschitz domains, fractional domain spaces, maximal $L^q$-regularity.
\end{abstract}

\maketitle

\section{Introduction}\label{sec:intro}

Boundedness of the $H^\infty$-calculus for a sectorial operator $A$ with domain $D(A)$ in a Banach space $X$ 
has found numerous applications in the theory of elliptic and parabolic equations. Outside Hilbert spaces it can be viewed as a frequently available replacement for the spectral theorem that one has for non-negative self-adjoint operators. 
If the $H^\infty$-calculus for $A$ is bounded then, e.g., $A$ has bounded imaginary powers, a property which entails equality of fractional domain spaces $D(A^\al)$, $\al\in(0,1)$, with complex interpolation spaces $[X,D(A)]_\al$ or -- via the Dore-Venni theorem in a $UMD$-space $X$ -- allows to prove maximal $L^q$-regularity for $1<q<\infty$.
Other applications include maximal regularity for stochastic evolution equations or uniform stability estimates for numerical time integration schemes.

For Stokes operators with no-slip boundary conditions the first result to mention is due to Giga (\cite{Giga}) where boundedness of the imaginary powers is shown in bounded $C^\infty$-domains $\Om$ by pseudodifferential methods. This result had then been used to identify fractional domain spaces of the Stokes operator on such domains. The same method allows to obtain boundedness of the $H^\infty$-calculus for Stokes operators in $L^p_\si(\Om)$, $1<p<\infty$, on bounded $C^\infty$-domains $\Om\subseteq\R^d$. Boundedness of imaginary powers for exterior domains had been shown by Giga and Sohr in \cite{GiSo}
by localization. Noll and Saal (\cite{NoSa}) proved boundedness of the $H^\infty$-calculus for Stokes operators with no-slip boundary conditions on bounded $C^3$-domains by localization and perturbation.

Abels developed pseudodifferential methods for non-smooth, e.g., $BUC$-coefficients in $x$ and used them to obtain boundedness of the $H^\infty$-calculus for Stokes operators on a variety of unbounded domains such as, e.g., asymptotically flat layers (\cite{abels}). Pr\"uss (\cite{pruess}) showed boundedness of the $H^\infty$-calculus for generalized Stokes operators with coefficients and various boundary conditions on bounded $C^{2,1}$-domains, again by localization and perturbation, where model problems in the half space are studied via Fourier multiplier results. On the whole space, the second author of this article extended these results to generalized Stokes operators with $L^{\infty}$-coefficients in \cite{Tolksdorf_non-local} relying on the methods summarized in this article. For Stokes operators with first order boundary conditions on unbounded domains, but with a fixed wall, boundedness of the $H^\infty$-calculus had been achieved in \cite{Ku:Hinfty-1st} by means of kernel estimates for Hodge boundary conditions and perturbation.

In this paper we concentrate on the method that allows to obtain boundedness of the $H^\infty$-calculus for Stokes operators by comparison of their (fractional) domain spaces in solenoidal $L^p$-spaces with (fractional) domain spaces of corresponding Laplace operators in $L^p(\Om)^d$ via the Helmholtz projection in $L^p(\Om)^d$. This method has its origins in \cite{KKW},
in particular Theorem 8.2 there. The proof of this result contains a gap which has been fixed in \cite{KuW:erratum} by the cost of an additional assumption. Roughly speaking, comparability of certain fractional domain spaces in $L^p$ for $p=2$ and some $p_0\neq2$ allowed by interpolation to manipulate certain expressions involving Littlewood-Paley operators for the Stokes and Laplace operator at hand. Via such expressions, boundedness of the $H^\infty$-calculus for the Stokes operator could be shown in $L^p$ for $p$ between $2$ and $p_0$. 

These results were improved in \cite{KuW:Hinfty-Stokes} where the assumptions could be relaxed. The basic idea there had been to argue directly with certain (exponential) decay estimates for suitable Littlewood-Paley expressions. These decay estimates in turn can be obtained by complex interpolation. In the end, comparability of fractional domain spaces is needed only in $L^p$ for $p=2$. Since less information about Stokes and Laplace operators in $L^p$ is needed for $p\neq2$, the result could be applied to Stokes operators with no-slip boundary conditions in bounded Lipschitz domains, see \cite{KuW:Hinfty-Stokes}.

Here, we give in Section~\ref{sec:abstract} a concise version of the abstract result in \cite{KuW:Hinfty-Stokes} (see Theorem~\ref{thm:abstract}, Corollary~\ref{cor:abstract}, and Remark~\ref{rem:Cor-sa} below). In Section~\ref{sec:StokesNBC}, we apply the abstract result to establish a bounded $H^\infty$-calculus for Stokes operators with certain Neumann type boundary conditions on bounded and convex domains in $\R^d$, $d\ge2$. 
For the application we need $R$-sectorialty of these operators in a certain part of the $L^p$-scale, and we prove them here by enhancing the methods used in \cite{Tolksdorf_convex} for the proof of resolvent bounds. The other key ingredients we need had been shown in \cite{MiMoWr}. The results on $R$-sectoriality and the $H^\infty$-calculus for the Neumann type Stokes operators are entirely new.

In the rest of the paper, we review in a survey-like style other situations where boundedness of the $H^\infty$-calculus has been shown by the comparison method. More precisely, we study the Stokes operator with no-slip boundary conditions in bounded Lipschitz domains in dimensions $d\ge2$, see Section~\ref{sec:bdd-Lip}, and we review results on Stokes operators with no-slip boundary conditions in $\wt{L}^p_\si(\Om)$ for unbounded uniform $C^{1,1}$-domains (originally established in \cite{K:hinfty-stokes}) and, assuming a Helmholtz decomposition, in $L^p_\si(\Om)$ in unbounded uniform $C^3$-domains with $1<p<\infty$ (originally proven in \cite{GK}), see Section~\ref{sec:unbdd}.
The proofs we give are simpler than the original ones, and -- in the $\wt{L}^p_\si$-case -- we reobtain the original assertion of \cite[Thm. 1.1]{K:hinfty-stokes}, for which the proof given there contained a gap, discussed in \cite[Rem. 1.2]{GK}.

\section{Preliminaries}

In this section we present essential definitions and some fundamental results that we need in the following.

For a Banach space $X$ we denote by $\sL(X)$ the space of bounded linear operators on $X$ and by $X'$ the \emph{dual space} of $X$, i.e., the space of continuous linear functionals on $X$, and by $X^*$  the \emph{anti-dual space} of $X$, i.e., the space of all continuous anti-linear functionals on $X$.

\subsection{Sectorial operators}
For a linear operator $A$ in a complex Banach space $X$ we denote its spectrum by $\si(A)$, its resolvent set by $\rho(A):=\C\sm\si(A)$, and resolvent operators by $R(\la,A)=(\la-A)^{-1}$ for $\la\in\rho(A)$.

We call a linear operator $A$ in $X$ with dense domain $D(A)$ and dense range \emph{sectorial} of angle $\om\in[0,\pi)$ if 
$$
 \si(A)\subseteq\Si_\om:=\{\la\in\C\sm\{0\}:|\operatorname{arg}\la|\le\om\}\cup\{0\},
$$ 
and, for each $\theta\in(\om,\pi)$, the set 
\begin{align}\label{eq:res-set} 
 \{\la R(\la,A):\la\in \C\sm\Si_\theta\}\subseteq \sL(X)
\end{align}
is bounded. Observe that with this definition of sectoriality, $A$ is, in particular, injective.

For an angle $\theta\in(0,\pi)$ we denote by $\Si_\theta^\circ$ the interior of $\Si_\theta$ and by $H^\infty(\Si_\theta^\circ)$ the space of all functions $g:\Si_\theta^\circ\to\C$ that are bounded and holomorphic on $\Si_\theta^\circ$.
If $A$ is a sectorial operator of angle $\om\in[0,\pi)$ in $X$ and $\theta\in(\om,\pi)$ we can define a functional calculus $f\mapsto f(A)$ for functions 
$$
 f\in H^\infty_0(\Si_\theta^\circ):= \big\{g\in H^\infty(\Si_\theta^\circ):\exists\eps,c>0: |g(z)|\le c\min\{|z|^\eps,|z|^{-\eps}\}, z\in\Si_\theta^\circ \big\}
$$ 
by contour integrals. Choosing $\ga\in(\om,\theta)$ the operator $f(A)$ is given by 
$$
 f(A):=\frac1{2\pi i}\int_{\partial\Si_\ga} f(\la) R(\la,A)\,d\la,
$$
where $\partial\Si_\ga$ is parametrized such that $\Si_\ga^\circ$ lies to the left. Observe that the integral here is absolutely convergent as a Bochner integral (or as an improper Riemann integral). We mention that the formula $f(A):=(\ph(A)^{-1})^n(f\ph^n)(A)$, where $\ph(\la)=\la(1+\la)^{-2}$ and $n \in \N$, yields an extension to a functional calculus of closed but unbounded operators, which comprises in particular fractional powers $A^\al$ of $A$ for all $\al\in\C$.

If $A$ is sectorial of angle $0\le\om<\frac\pi2$, then $-A$ is the generator of a bounded analytic semigroup. More precise,
$z\mapsto e^{-zA}$ is bounded on every sector $\Si_\del$ with $\del\in(0,\frac{\pi}{2} - \om)$. 

\subsection{Bounded $H^\infty$-caculus}
Let $\theta \in (\omega , \pi)$. The operator $A$ is said to \emph{have a bounded $H^\infty(\Si_\theta^\circ)$-calculus} if there exists $C>0$ such that, for all $ f\in H^\infty_0(\Si_\theta^\circ)$, we have
\begin{align}\label{eq:hinfty-est}
 \nn{f(A)}{\sL(X)}\le C\sup\{|f(\la)|:\la\in\Si_\theta^\circ\}.
\end{align}
In this case, the functional calculus has a unique extension to all $f\in H^\infty(\Si_\theta^\circ)$ and the estimate \eqref{eq:hinfty-est} persists to this larger class of functions. We refer to, e.g., \cite[Section 9]{KuW:levico} for the construction and further properties of the $H^\infty$-calculus. 

\subsection{Bounded imaginary powers}\label{sub:BIP} We note as an immediate consequence of a bounded $H^\infty(\Si_\theta^\circ)$-calculus for $A$ that $A$ has \emph{bounded imaginary powers} $A^{it}$, $t\in\R$, with an estimate $\nn{A^{it}}{\sL(X)}\le C e^{\theta|t|}$, $t\in\R$. In particular,
domains of fractional powers of $A^\al$, $\al\in(0,1)$, can be obtained as complex interpolation spaces between $X$ and $D(A)$:
\begin{align}\label{eq:intpol-dom}
 \big[X , D(A) \big]_\al=D(A^\al),\quad \al\in(0,1),
\end{align}
see \cite[Thm.~1.15.3]{Triebel} (and \cite[Thm.~15.28]{KuW:levico} for a homogeneous version). This is one main application of a bounded $H^\infty$-calculus, but -- outside Hilbert spaces -- boundedness of the $H^\infty$-calculus is not necessary to have bounded imaginary powers.

\subsection{$R$-sectorial operators and maximal $L^q$-regularity}\label{sub:R-sec} A sectorial operator $A$ in a Banach space $X$ is called \emph{$R$-sectorial} of angle $\om\in[0,\pi)$ if $\si(A)\subseteq\Si_\om$ and, for any $\theta\in(\om,\pi)$, the set in \eqref{eq:res-set} is $R$-bounded in $\sL(X)$. Here, a set $\mathcal{T}\subseteq\cL(X)$ 
is called \emph{$R$-bounded} if there exists a $C>0$ such that, for any $n\in\N$ and $x_1,\ldots,x_n\in X$, $T_1,\ldots,T_n\in\mathcal{T}$, we have the estimate
$$
 \EE\Big\| \sum_{j=1}^n\eps_j T_j x_j \Big\|_X = C \EE\Big\| \sum_{j=1}^n \eps_j x_j \Big\|_{X},
$$  
where $\eps_1,\ldots,\eps_n$ are independent and symmetric random variables with values in $\{-1,1\}$, e.g., Rademachers.
We refer to, e.g., \cite{KuW:levico} for more on this notion.

In a Banach space $X$ with the $UMD$-property, so in particular in a closed subspace of an $L^p$-space, $1<p<\infty$, we have the following two facts:

First, boundedness of the $H^\infty(\Si_\theta^\circ)$-calculus of $A$ implies that $A$ is $R$-sectorial of angle $\le\theta$ and the infimum of the $H^\infty$-angles $\theta$ equals the infimum of the $R$-sectoriality angles (see \cite[Thm. 5.3]{KaWe} and use \cite[Prop. 3.2]{KaWe}). Hence, boundedness of the $H^\infty$-calculus can be seen as \emph{qualitative} improvement of $R$-sectoriality whose \emph{quantitative} aspect (the angle) is ruled by the $R$-sectoriality angle.

Second, Lutz Weis' famous result (\cite{Weis}) characterizes, among sectorial operators of angle $<\frac\pi2$, those that have maximal $L^q$-regularity for $1<q<\infty$ as those that are $R$-sectorial of angle $<\frac\pi2$, see \cite{Weis}. Here, $A$ is said to \emph{have maximal $L^q$-regularity}, where $q\in(1,\infty)$ if, for any $f\in L^q(0,\infty;X)$ the problem 
$$
 u'(t)+Au(t)=f(t),\quad t>0,\qquad u(0)=0,
$$
has a unique mild solution $u$ which satisfies $u', Au\in L^q(0,\infty;X)$ and the estimate
$$
 \nn{u'}{L^q(0,\infty;X)}+\nn{Au}{L^q(0,\infty;X)}\le C\nn{f}{L^q(0,\infty;X)},
$$
for some $C>0$. Maximal $L^q$-regularity has found a multitude of applications in the study of nonlinear parabolic problems via fixed point arguments or the implicit function theorem.

\section{The abstract result}\label{sec:abstract}

The following result has been shown in \cite{KuW:Hinfty-Stokes}. We give here an explicit formulation of a special case for easy reference. The result can be used to transfer a bounded $H^\infty$-calculus from an operator $B$ on a scale of spaces $L^p(\Om)$, $p\in I$, to an operator $A$ on a corresponding scale of complemented subspaces $X_p$ of $L^p(\Om)$ under certain conditions on their relation. 

Here, $(\Om,\mu)$ is a $\si$-finite measure space and $I\subseteq(1,\infty)$ is an interval containing $p=2$. Strictly speaking, we have, of course, a different operator $B_p$ in $L^p(\Om)$ for each $p$ and we assume that they have consistent resolvents, i.e., $R(\la,B_{p_1})$ and $R(\la, B_{p_2})$ coincide on $L^{p_1}(\Om)\cap L^{p_2}(\Om)$ for all $p_1,p_2\in I$ and all $\la$ outside some sector containing the right half line. In this case, we shall also simply call the family $(B_p)_{p\in I}$ \emph{consistent}.  

An important special case is when $A_2$ and/or $B_2$ are self adjoint in $X_2$, $L^2(\Om)$, respectively. For duality arguments we hence work, for linear operators $T:Y\to Z$, with the \emph{adjoint} operators $T^*:Z^*\to Y^*$, which act between the anti-dual spaces $Z^*$ of $Z$ and $Y^*$ of $Y$. For a Hilbert space $H$ with scalar product $\sk{\cdot}{\cdot}$, we recall the (linear) Riesz isomorphism $H\to H^*$, $h\mapsto\sk{h}{\cdot}$. We recall for $p\in(1,\infty)$ that the anti-dual space of $L^p(\Om)$ is $(L^p(\Om))^*=L^{p'}(\Om)$ via the pairing $\sk{f}{g}_{L^p\times L^{p'}}=\int_\Om f\ov{g}\,d\mu$.

\medskip

\begin{theorem}\label{thm:abstract}
Let $p_0\in(1,\infty)\sm\{2\}$ and put $I:=[\min\{2,p_0\},\max\{2,p_0\}]$. Let $(B_p)_{p\in I}$ be a family of consistent sectorial operators in $L^p(\Om)$, $p\in I$. Assume that, for each $p\in I$, the operator $B_p$ has a bounded $H^\infty$-calculus in $L^p(\Om)$.

Let $(X_p)_{p\in I}$ be a family of closed linear subspaces $X_p\subseteq L^p(\Om)$ and let $(R_p)_{p\in I}$ and $(S_p)_{p\in I}$ be families of consistent bounded linear operators such that, for each $p\in I$, $R_p:L^p(\Om)\to X_p$, $S_p:X_p\to L^p(\Om)$ and $R_pS_p=I_{X_p}$. Let $(A_p)_{p\in I}$ be a consistent family of linear operators such that each $A_p$ is $R$-sectorial in $X_p$. 

Assume that, for some $\al_1>0>\al_2$ and some $\beta_1>0>\beta_2$ we have
\begin{align}
\label{eq:RB-incl} R_2(D(B_2^\al))&\subseteq D(A_2^\al)\quad\mbox{and}\quad
 \n{A_2^{\al}R_2f}\lesssim\n{B_2^\al f},\ \ f\in D(B_2^\al), \al\in\{\al_1,\al_2\},\\
\label{eq:SA-incl} S_2(D(A_2^\beta))&\subseteq D(B_2^\beta)\quad\mbox{and}\quad
 \n{B_2^{\beta}S_2f}\lesssim\n{A_2^\beta f},\ \ f\in D(A_2^\beta), \beta\in\{\beta_1,\beta_2\}.
\end{align}
Then, for each $p\in I\sm\{p_0\}$, $A_p$ has a bounded $H^\infty$-calculus in $X_p$.
\end{theorem}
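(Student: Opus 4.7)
The plan is a dyadic decomposition in the spectral variable combined with complex interpolation between the endpoints $p=2$ (where the fractional-power comparisons \eqref{eq:RB-incl}--\eqref{eq:SA-incl} apply) and $p=p_0$ (where one only has the $H^\infty$-calculus of $B_{p_0}$ and the $R$-sectoriality of $A_{p_0}$). Fix $\theta$ larger than all relevant sectoriality angles and a non-trivial $\phi_0\in H^\infty_0(\Si_\theta^\circ)$ supported in $\{1/2\le|\lambda|\le 2\}$ with $\sum_{k\in\Z}\phi_k\equiv 1$ on $\Si_\theta^\circ\setminus\{0\}$, where $\phi_k(\lambda):=\phi_0(2^{-k}\lambda)$. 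For $f\in H^\infty_0(\Si_\theta^\circ)$ and $x\in X_p$, using $R_pS_p=I_{X_p}$ I would decompose
\begin{equation*}
 f(A_p)\,x \;=\; R_p f(B_p) S_p\,x \;-\; \sum_{k\in\Z} E_k^f\, x,\qquad
 E_k^f \;:=\; R_p(f\phi_k)(B_p)S_p - (f\phi_k)(A_p).
\end{equation*}
The leading term is controlled by $\|R_p\|\,\|S_p\|\,\|f\|_\infty$ via the bounded $H^\infty$-calculus of $B_p$, so everything reduces to the summability $\sum_k\|E_k^f\|_{\sL(X_p)}\lesssim\|f\|_\infty$. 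On $L^{p_0}$ each $E_k^f$ is only uniformly bounded by $C\|f\|_\infty$: the $B$-part by the $H^\infty$-calculus of $B_{p_0}$, the $A$-part by $R$-sectoriality of $A_{p_0}$ together with the scale-invariant bound $\int_{\partial\Si_\gamma}|(f\phi_k)(\lambda)|\,|\lambda|^{-1}\,|d\lambda|\lesssim\|f\|_\infty$.

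The heart of the argument is an exponential decay on $L^2$, namely $\|E_k^f\|_{\sL(X_2)}\lesssim 2^{-\eps|k|}\|f\|_\infty$ with $\eps:=\min\{\alpha_1,-\alpha_2,\beta_1,-\beta_2\}$. For $k\ge 0$, factor $\phi_k(\lambda)=\lambda^{\alpha_1}\psi_k(\lambda)$ with $\psi_k\in H^\infty_0$ and $\|\psi_k\|_\infty\simeq 2^{-k\alpha_1}$; the hypothesis \eqref{eq:RB-incl} at $\alpha_1$, rephrased as boundedness on $L^2$ of $A_2^{\alpha_1}R_2 B_2^{-\alpha_1}$, lets one transport the $\lambda^{\alpha_1}$-factor across $R_2$ in the contour integral for $R_2(f\phi_k)(B_2)S_2$, while the $\psi_k$-factor supplies the $2^{-k\alpha_1}$ gain; hypothesis \eqref{eq:SA-incl} at $\beta_1$ plays the analogous role on the $S_2$-side. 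For $k<0$ one uses $\alpha_2,\beta_2<0$: the negative powers combine with $2^k\to 0$ to yield the mirror estimate. Since no intertwining $A_2R_2=R_2B_2$ is assumed, all manipulations must be carried out inside the contour integrals, inserting the bounded $L^2$-operators $A_2^{\alpha_j}R_2B_2^{-\alpha_j}$ and $B_2^{\beta_j}S_2A_2^{-\beta_j}$ at carefully chosen places.

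With $\|E_k^f\|_{\sL(X_{p_0})}\lesssim\|f\|_\infty$ and $\|E_k^f\|_{\sL(X_2)}\lesssim 2^{-\eps|k|}\|f\|_\infty$, Stein's complex interpolation theorem---applied with $X_p$ identified as the range of the bounded projection $S_pR_p$ in $L^p$, so that interpolation of the complemented subspaces $(X_p)_{p\in I}$ matches the $L^p$-interpolation---yields, for $p\in I\setminus\{p_0\}$, an intermediate decay $\|E_k^f\|_{\sL(X_p)}\lesssim 2^{-\eps'|k|}\|f\|_\infty$ with some $\eps'>0$ (for $p=2$ one has $\eps'=\eps$ directly, with no interpolation needed). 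This is summable in $k$, and combined with the $R_p f(B_p) S_p$ term it yields $\|f(A_p)\|_{\sL(X_p)}\lesssim\|f\|_\infty$, i.e.\ the bounded $H^\infty(\Si_\theta^\circ)$-calculus of $A_p$. The main obstacle I anticipate is the $L^2$-decay step: the fractional-power manipulation inside the contour integrals has to be set up so that \eqref{eq:RB-incl}--\eqref{eq:SA-incl} apply on the nose, which is delicate precisely because one cannot commute $\phi_k$ through $R_2$. A secondary technical issue is constructing the analytic family (e.g.\ deforming $f\phi_k$ to $f\phi_k\cdot\lambda^{iz}$ on a strip) so that the uniform-in-$k$ endpoint bounds persist along the interpolation.
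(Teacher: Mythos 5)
Your high-level skeleton (dyadic blocks, exponential decay at $p=2$ extracted from the fractional comparisons, uniform bounds at $p_0$, interpolation along the consistent scale, summation) resembles the strategy behind the paper's proof, which verifies the decay conditions (22)--(23) of \cite[Thm.~9]{KuW:Hinfty-Stokes} at $p=2$ via \cite[Prop.~10]{KuW:Hinfty-Stokes} and then interpolates. But your central lemma is false, and this is a genuine gap rather than a technicality. You claim $\n{E_k^f}_{\sL(X_2)}\lesssim 2^{-\eps|k|}\n{f}_\infty$ for $E_k^f=R_2(f\phi_k)(B_2)S_2-(f\phi_k)(A_2)$. Take $\Om=\R^d$, $X_p=L^p(\R^d)$, $R_p=S_p=\mathrm{Id}$, $B_2=-\Delta$, $A_2=-2\Delta$: all hypotheses of the theorem hold (indeed $D(A_2^\al)=D(B_2^\al)$ with $\n{A_2^\al f}=2^\al\n{B_2^\al f}$ for every $\al$), yet with $f=\phi_k$ one gets by Plancherel
\begin{equation*}
 \n{E_k^{\phi_k}}_{\sL(L^2)}=\sup_{s>0}\big\lvert \phi_0(s)^2-\phi_0(2s)^2 \big\rvert,
\end{equation*}
a strictly positive constant independent of $k$ (it vanishes only if $\phi_0^2$ is multiplicatively $2$-periodic on $(0,\infty)$, which the decay of $H^\infty_0$-functions at $0$ and $\infty$ rules out for $\phi_0\neq0$). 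Structurally, the hypotheses \eqref{eq:RB-incl}--\eqref{eq:SA-incl} are invariant under the simultaneous dilation $(A_2,B_2)\mapsto(tA_2,tB_2)$, whereas decay in $|k|$ singles out the unit spectral scale, so no such dilation-invariant assumptions can imply it. Your own sketch already signals the problem: factoring $\phi_k(\la)=\la^{\al_1}\psi_k(\la)$ and inserting the bounded operator $A_2^{\al_1}R_2B_2^{-\al_1}$ leaves an unbounded factor $A_2^{-\al_1}$ dangling; it can only be absorbed by a second, $A$-localized block. The decay the hypotheses actually yield is \emph{off-diagonal} decay of mixed Littlewood--Paley blocks, $\n{\phi_k(A_2)R_2\tilde\phi_l(B_2)}\lesssim 2^{-\eps|k-l|}$, via $\big[\phi_k(A_2)A_2^{-\al}\big]\big[A_2^{\al}R_2B_2^{-\al}\big]\big[B_2^{\al}\tilde\phi_l(B_2)\big]$ with $\al=\al_1$ for $k\ge l$ and $\al=\al_2$ for $k\le l$ (this is precisely why both signs are required), and analogously on the $S_2$-side with $\beta_1,\beta_2$; on the diagonal $k=l$ there is no smallness at all, which is exactly where your series $\sum_k E_k^f$ fails to be summable uniformly in $\n{f}_\infty$.

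Because the diagonal carries no smallness, no norm-convergent perturbation series of $f(A_p)$ around $R_pf(B_p)S_p$ can work; the mechanism behind \cite[Thm.~9]{KuW:Hinfty-Stokes} is instead a transference of randomized (discrete square function) estimates: one shows $\EE\big\|\sum_k\eps_k\phi_k(A_p)x\big\|\lesssim\n{x}$ and its lower counterpart by writing $x=R_pS_px$, expanding over a second dyadic index $l$ on the $B_p$-side, and summing over $m=k-l$ using the $R$-boundedness of the families $\{2^{\eps|k-l|}\phi_k(A)R\tilde\phi_l(B)\}$ and $\{2^{\eps|k-l|}\phi_k(B)S\tilde\phi_l(A)\}$; combined with $R$-sectoriality this characterizes the bounded $H^\infty$-calculus. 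This also explains an ingredient you drop: what must be interpolated between $p=2$ and $p=p_0$ are \emph{$R$-boundedness} conditions, which requires $B$-convexity of $L^p(\Om)$ and $X_p$ (the paper invokes \cite[Sec.~1.2.4]{Triebel} and \cite[Prop.~3.7]{KKW}), not merely Stein interpolation of operator norms. Two smaller defects: a nonzero $\phi_0\in H^\infty_0(\Si_\theta^\circ)$ cannot be supported in an annulus (there are no holomorphic partitions of unity; one works with functions having exponential off-dyadic decay and normalizes $\sum_k\phi_k=1$), and $S_pR_p$ is a projection onto $S_p(X_p)$, not onto $X_p$, so your identification of the interpolation scale is only correct when $S_p$ is the inclusion map.
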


\medskip

We mention that, for negative $\al$, $D(A_2^\al)$ equals the range of $A_2^{-\al}$. In particular, if $0\in\rho(A_2)$, then
$D(A_2^\al)=X_2$. However, the norm estimate in, e.g., \eqref{eq:RB-incl} then entails inclusions for the completions of the respective spaces. If $0\in\rho(A_2)$ and $\al>0$, then $D(A_2^\al)$ is a Banach space for the norm $\nn{A_2^\al x}{X_2}$. In certain situations this allows for applications of the closed graph theorem and its consequences.

Theorem~\ref{thm:abstract} has been proved in \cite{KuW:Hinfty-Stokes}, in principle, although not in the formulation above.
We give a proof below for convenience. We also refer to \cite[Subsection 3.2]{GT} where the approach from \cite{KuW:Hinfty-Stokes} has been sketched in some detail. Concerning the assumptions \eqref{eq:RB-incl} and \eqref{eq:SA-incl} in Theorem~\ref{thm:abstract} we remark the following.

\medskip

\begin{remark}\rm
The conditions for the negative exponents $\al_2$ and $\beta_2$ in \eqref{eq:RB-incl} and \eqref{eq:SA-incl}, respectively, can be reformulated as 
\begin{align}
 R_2^*\big(D((A_2^*)^{|\al_2|})\big)&\subseteq D\big((B_2^*)^{|\al_2|}\big), \quad\mbox{and}\quad
 \n{(B_2^*)^{|\al_2|}R_2^*f}\lesssim\n{(A_2^*)^{|\al_2|} f},\ \ f\in D((A_2^*)^{|\al_2|}), \\
 S_2^*\big(D((B_2^*)^{|\beta_2|})\big)&\subseteq D\big((A_2^*)^{|\beta_2|}\big), \quad\mbox{and}\quad
 \n{(A_2^*)^{|\beta_2|}S_2^*f}\lesssim\n{(B_2^*)^{|\beta_2|} f},\ \ f\in D((B_2^*)^{|\beta_2|}),
\end{align}
respectively. We refer to \cite[Prop. 11]{KuW:Hinfty-Stokes} for a proof.
\end{remark}

\medskip

\begin{proof}[Proof of Theorem~\ref{thm:abstract}]
First we use \cite[Prop. 10]{KuW:Hinfty-Stokes} to see that the conditions (22) and (23) in \cite[Thm. 9]{KuW:Hinfty-Stokes} are satisfied for $B_2$ and $A_2$ in $L^2(\Om)$ and $X_2$, respectively. Then we use complex interpolation to see that (22) and (23)  in \cite[Thm. 9]{KuW:Hinfty-Stokes} are satisfied for $B_p$ and $A_p$ in $L^p(\Om)$ and $X_p$, respectively, for $p$ between $2$ and $p_0$.
For the interpolation of the $R$-boundedness conditions (22) and (23) we have to use \cite[Section 1.2.4]{Triebel} and that
$L^p(\Om)^d$ and $X_p$ are $B$-convex for $1<p<\infty$, we refer to \cite[Prop. 3.7]{KKW}. 
Now, \cite[Thm. 9]{KuW:Hinfty-Stokes} yields a bounded $H^\infty$-calculus for $A_p$ in $X_p$. 
\end{proof}

\medskip

\begin{remark}\rm\label{rem:complementedXp} 
In applications $X_p$ usually is a complemented subspace in $L^p(\Om)$ and $P_p\in\sL(L^p(\Om))$ is a corresponding projection onto $X_p=\ran P_p$. Then $R_p:L^p(\Om)\to X_p$ acts as $P_p$ and $S_p=\iota_p:X_p\to L^p(\Om)$ is the inclusion map.

However, for the adjoint operators we have $R_p^*:X_p^*\to (L^p(\Om))^*$ and $P_p^*:(L^p(\Om))^*\to (L^p(\Om))^*$, in particular, $P_p^*\in\sL(L^{p'}(\Om))$ is a projection. It is easily seen that the identification $(L^p(\Om))^*=L^{p'}(\Om)$ leads to the identification $X_p^*=\ran P_p^*\subseteq L^{p'}(\Om)$ and that $R_p^*:X_p^*=\ran P_p^*\to L^{p'}(\Om)$ is the inclusion.  

For our realization $X_p^*=\ran P_p^*$, the adjoint $S_p^*: L^{p'}(\Om)\to \ran P_p^*$ of the inclusion map $S_p=\iota_p:X_p\to L^p(\Om)$ acts as $P_p^*$. 
\end{remark}

\medskip

We further specialize to the case $\al_1=\beta_1=\al>0$ and $\al_2=\beta_2=-\beta<0$  
and study the case that $A_2$ and $B_2$ are boundedly invertible.

\medskip

\begin{remark}\rm
In the situation of Remark~\ref{rem:complementedXp}, if we have, for some $\al,\beta>0$, the conditions
\begin{align}
\label{eq:A2-B2} 
\left\{ \begin{aligned}
 P_2(D(B_2^\al))&\subseteq D(A_2^\al)\subseteq D(B_2^\al), \\ 
 \n{A_2^{\al}P_2f} &\lesssim\n{B_2^\al f},\ \ f\in D(B_2^\al), \\ 
 \n{B_2^\al f} &\lesssim \n{A_2^\al f},\ \ f\in D(A_2^\al),
 \end{aligned} \right.
\end{align}
and 
\begin{align}
\label{eq:A2*-B2*} 
\left\{\begin{aligned}
P_2^*(D((B_2^*)^\beta))&\subseteq D((A_2^*)^\beta)\subseteq D((B_2^*)^\beta), \\       
 \n{(A_2^*)^{\beta}P_2^*f}&\lesssim\n{(B_2^*)^\beta f},\ \ f\in D((B_2^*)^\beta), \\
 \n{(B_2^*)^{\beta}f}&\lesssim\n{(A_2^*)^\beta f},\ \ f\in D((A_2^*)^\beta),
 \end{aligned}\right.
\end{align}
then \eqref{eq:RB-incl} and \eqref{eq:SA-incl} hold for $\al_1=\beta_1=\al>0$ and $\al_2=\beta_2=-\beta<0$. 

If we have, in addition, that $0\in\rho(A_2)\cap \rho(B_2)$ then condition \eqref{eq:A2-B2} is equivalent to
\begin{align}
 P_2(D(B_2^\al))=D(A_2^\al)=D(B_2^\al)\cap X_2,
\end{align}
and condition \eqref{eq:A2*-B2*} is equivalent to 
\begin{align}
 P_2^*(D((B_2^*)^\beta))=D((A_2^*)^\beta)=D((B_2^*)^\beta)\cap\ran P_2^*.
\end{align}
This can be seen by an application of the closed graph theorem.
\end{remark}

\medskip

Finally, we specialize to the case of a self-adjoint projection $P_2=P_2^*$. We formulate this as a corollary of Theorem~\ref{thm:abstract} but repeat the full list of assumptions.

\medskip

\begin{corollary}\label{cor:abstract}
Let $p_0\in(1,\infty)\sm\{2\}$ and put $I:=[\min\{2,p_0\},\max\{2,p_0\}]$. Let $(B_p)_{p\in I}$ be a family of consistent sectorial operators in $L^p(\Om)$, $p\in I$. 
Assume that, for each $p\in I$, the operator $B_p$ has a bounded $H^\infty$-calculus in $L^p(\Om)$. 

Let $(X_p)_{p\in I}$ be a family of closed linear subspaces $X_p\subseteq L^p(\Om)$ such that $X_p=\ran P_p$ for a consistent family $(P_p)_{p\in I}$ of projections $P_p\in\sL(L^p(\Om))$ and let $P_2$ be self-adjoint in $L^2(\Om)$. 
Let $(A_p)_{p\in I}$ be a consistent family of linear operators such that each $A_p$ is $R$-sectorial in $X_p$. If $0\in\rho(A_2)\cap \rho(B_2)$ and, for some $\al,\beta>0$, we have
\begin{align}
\label{eq:A2-B2-P2} P_2(D(B_2^\al))=D(A_2^\al)=D(B_2^\al)\cap X_2,   \\
\label{eq:A2*-B2*-P2} P_2(D((B_2^*)^\beta))=D((A_2^*)^\beta)=D((B_2^*)^\beta)\cap X_2,
\end{align}
then $A_p$ has a bounded $H^\infty$-calculus in $X_p$ for each $p\in I\sm\{p_0\}$.
\end{corollary}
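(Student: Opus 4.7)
The plan is to reduce Corollary~\ref{cor:abstract} to Theorem~\ref{thm:abstract} via the canonical choice from Remark~\ref{rem:complementedXp}: set $R_p := P_p$, viewed as a surjection $L^p(\Om) \to X_p$, and $S_p := \iota_p$, the inclusion $X_p \hookrightarrow L^p(\Om)$. These families are consistent because $(P_p)_{p\in I}$ is, and $R_p S_p = P_p \iota_p = I_{X_p}$ since $P_p$ projects onto $X_p$. It then suffices to verify \eqref{eq:RB-incl} and \eqref{eq:SA-incl} with the paired exponents $\al_1 = \beta_1 = \al$ and $\al_2 = \beta_2 = -\beta$; the remark preceding the corollary records that the combined conditions \eqref{eq:A2-B2} and \eqref{eq:A2*-B2*} (with $R_2 = P_2$, $S_2 = \iota_2$) are exactly what \eqref{eq:RB-incl} and \eqref{eq:SA-incl} demand at these four exponents.

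My first step is to derive \eqref{eq:A2-B2} from the hypothesis \eqref{eq:A2-B2-P2}. The last part of the preceding remark records this equivalence under the standing assumption $0\in\rho(A_2)\cap\rho(B_2)$; the underlying argument is a standard closed graph application, since the norms $\|B_2^\al\cdot\|$ and $\|A_2^\al\cdot\|$ are genuine Banach norms on $D(B_2^\al)$ and $D(A_2^\al)$, respectively, and both the map $P_2 : D(B_2^\al) \to D(A_2^\al)$ and the inclusion in the opposite direction supplied by \eqref{eq:A2-B2-P2} have closed graphs. This yields the norm estimates in \eqref{eq:RB-incl} at $\al_1 = \al$ and in \eqref{eq:SA-incl} at $\beta_1 = \al$.

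For the negative exponents I would pass to the dual formulation of the remark following Theorem~\ref{thm:abstract}: the conditions at $\al_2 = \beta_2 = -\beta$ translate into inclusions and norm estimates for $R_2^*$, $S_2^*$ involving $A_2^*$ and $B_2^*$. Here the hypothesis $P_2 = P_2^*$ enters decisively. Under the Riesz identification $(L^2(\Om))^* = L^2(\Om)$ we have $\ran P_2^* = \ran P_2 = X_2$, so the identification $X_2^* = \ran P_2^*$ from Remark~\ref{rem:complementedXp} becomes $X_2^* = X_2$ and \eqref{eq:A2*-B2*-P2} literally becomes $P_2^*(D((B_2^*)^\beta)) = D((A_2^*)^\beta) = D((B_2^*)^\beta) \cap \ran P_2^*$; the same closed graph argument then yields the adjoint analog of \eqref{eq:A2-B2}, which is exactly \eqref{eq:A2*-B2*}, and hence \eqref{eq:RB-incl} and \eqref{eq:SA-incl} at $\al_2 = \beta_2 = -\beta$.

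With all four exponent conditions established, the hypotheses of Theorem~\ref{thm:abstract} are satisfied (the $R$-sectoriality of $(A_p)$ and the bounded $H^\infty$-calculus of $(B_p)$ being assumed in the corollary), and its conclusion gives a bounded $H^\infty$-calculus for $A_p$ on $X_p$ for every $p \in I \sm \{p_0\}$. I do not foresee a genuine mathematical obstacle in this corollary; the only delicate point is the bookkeeping with adjoints and the Riesz identification, which is precisely where the self-adjointness $P_2 = P_2^*$ is needed in order to match the adjoint conditions back to hypotheses formulated on $X_2$ itself.
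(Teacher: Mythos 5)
Your proposal is correct and follows essentially the same route as the paper: the corollary is indeed obtained by specializing Theorem~\ref{thm:abstract} with $R_p=P_p$ and $S_p=\iota_p$ as in Remark~\ref{rem:complementedXp}, upgrading the domain equalities \eqref{eq:A2-B2-P2} and \eqref{eq:A2*-B2*-P2} to the norm estimates \eqref{eq:A2-B2} and \eqref{eq:A2*-B2*} by the closed graph theorem (which is where $0\in\rho(A_2)\cap\rho(B_2)$ is needed, exactly as the preceding remark records), and using the dual reformulation of the negative-exponent conditions together with $P_2=P_2^*$ to identify $X_2^*$ with $X_2$ so that \eqref{eq:A2*-B2*-P2} supplies the adjoint hypotheses. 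You also correctly pinpointed the one delicate spot, namely that self-adjointness of $P_2$ under the Riesz identification is what lets the adjoint conditions be phrased on $X_2$ itself.
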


\medskip

\begin{remark}\label{rem:Cor-sa}\rm
If, in the situation of Corollary~\ref{cor:abstract}, $B_2$ is self-adjoint in $L^2(\Om)$ and $A_2$ is self-adjoint in $X_2$ then, of course, \eqref{eq:A2*-B2*-P2} is just \eqref{eq:A2-B2-P2} for $\beta$ in place of $\al$. Hence, \eqref{eq:A2-B2-P2} for some $\al>0$ is already sufficient for the conclusion.
 
More general, if both $B_2$ and $A_2$ are associated with closed sectorial forms in $L^2(\Om)$ and $X_2$, respectively, then we have $D((B_2^*)^\al)=D(B_2^\al)$ and $D((A_2^*)^\al)=D(A_2^\al)$ for all $\al\in(0,\frac12)$ by a result due to Kato (\cite{Kato}). In this situation, \eqref{eq:A2-B2-P2} for some $\al\in(0,\frac12)$ is sufficient for the conclusion.
\end{remark}

\section{Stokes operators with Neumann type boundary conditions}\label{sec:StokesNBC}

We give a first example for an application of Corollary~\ref{cor:abstract} in case of the Stokes operator with Neumann type boundary conditions in bounded convex domains $\Om \subseteq \R^d$. More precisely, we study the operator associated to the resolvent problem 
\begin{align}
\label{Eq: Neumann resolvent}
 \left\{ \begin{aligned}
 \lambda u - \Delta u + \nabla \phi &= f && \text{in } \Om, \\
 \dive(u) &= 0 && \text{in } \Om, \\
 \{ D u + \mu [D u]^{\top} \} n - \phi n &= 0 && \text{on } \partial \Om,
\end{aligned} \right.
\end{align}
where $\mu$ is a given parameter and $n$ denotes the outward unit normal to $\partial \Omega$. This operator is studied in the scale of solenoidal $L^p$-spaces given by
\begin{align*}
 \sL^p (\Om) := \{ f \in L^p (\Om)^d \colon \dive (f) = 0 \} \qquad (1 < p < \infty).
\end{align*}
To introduce the Stokes operator subject to the Neumann type boundary condition written above, we introduce the space of solenoidal test functions
\begin{align*}
 \cH^1_{\sigma} (\Om) := \{ u \in \sL^2_{\sigma} (\Om) : \nabla u \in L^2 (\Omega)^{d \times d} \}.
\end{align*}
Given $\mu \in (-1 , 1)$ we introduce coefficients $a_{j k}^{\alpha \beta} (\mu) := \delta_{j k} \delta_{\alpha \beta} + \mu \delta_{j \beta} \delta_{k \alpha}$, where $\delta_{\alpha \beta}$ denotes Kronecker's delta. Notice that the divergence form operator with coefficients $a_{j k}^{\alpha \beta} (\mu)$ is formally given by
\begin{align*}
 \partial_j a_{j k}^{\alpha \beta} (\mu) \partial_k u_{\beta} = \Delta u_{\alpha} + \mu \partial_{\alpha} \dive(u).
\end{align*}
Thus, on solenoidal functions this operator acts merely as the Laplacian. The Stokes operator subject to the above Neumann type boundary condition is the operator $A_{\mu}$ in $\sL^2_{\sigma} (\Omega)$ associated with the sesquilinear form
\begin{align*}
 \mathfrak{a}_{\mu} : \cH^1_{\sigma} (\Omega) \times \cH^1_{\sigma} (\Omega) \to \C, \quad (u , v) \mapsto \sum_{\alpha , \beta = 1}^d \sum_{j , k = 1}^d \int_{\Om} a_{j k}^{\alpha \beta} (\mu) \partial_k u_{\beta} \overline{\partial_j v_{\alpha}} \, d x.
\end{align*}
More precisely, 
\begin{align*}
 D(A_{\mu}) &:= \{ u \in \cH^1_{\sigma} (\Om) \colon \exists f \in \sL^2_{\sigma} (\Om) \text{ s.t.\@ } \mathfrak{a}_{\mu} (u , v) = \langle f , v \rangle_{L^2} \text{ for all } v \in \cH^1_{\sigma} (\Omega) \}, \\
 A_{\mu} u &:= f \qquad (u \in D(A_{\mu})).
\end{align*}
Employing de Rham's lemma as in~\cite[Thm~6.8]{MiMoWr} one can associate to every $u \in D(A_{\mu})$ a unique pressure function $\phi \in L^2 (\Omega)$ such that
\begin{align*}
 \big\langle A_{\mu} u , v \big\rangle = \mathfrak{a}_{\mu} (u , v) - \int_{\Omega} \phi \, \overline{\dive (v)} \, d x \qquad (v \in H^1 (\Omega)^{d}).
\end{align*}
In particular, this provides the meaning of the boundary condition stated in~\eqref{Eq: Neumann resolvent}. Given $\theta \in [0 , \pi)$ and $\lambda \in \Sigma_{\theta}$, the resolvent problem with right-hand side $f \in \sL^2_{\sigma} (\Omega)$ is formulated weakly as
\begin{align}
\label{Eq: Strong resolvent Neumann}
 \lambda \int_{\Omega} u \overline{v} \, d x + \mathfrak{a}_{\mu} (u , v) - \int_{\Omega} \phi \, \overline{\dive(v)} \, d x = \int_{\Omega} f \overline{v} \, d x \qquad (v \in H^1 (\Omega)^d).
\end{align}
In the following, we denote by $\mathbb{Q}$ the orthogonal projection in $L^2 (\Omega)^d$ onto $\sL^2_{\sigma} (\Omega)$ as the \textit{Leray projection}. It is well-known that
\begin{align*}
 L^2 (\Omega)^d = \sL^2_{\sigma} (\Omega) \oplus \nabla H^1_0 (\Omega).
\end{align*}
Moreover, $\mathbb{Q}$ has a continuous extension $\mathcal{Q} : (H^1 (\Omega)^d)^* \to (\cH^1_{\sigma} (\Omega))^*$ which is given by $\mathcal{Q} v = v|_{\cH^1_{\sigma}}$ and observe that $\mathcal{Q}$ is the adjoint of the inclusion map $\cH^1_{\sigma} (\Omega) \subseteq H^1 (\Omega)^d$. \par
We will also study the Stokes resolvent problem with a right-hand side in $(\cH^1_{\sigma} (\Omega))^*$. Given $F \in L^2 (\Omega)^{d \times d}$ this right-hand side will be defined as $\mathcal{Q} \, \dive(F)$ and the corresponding resolvent problem reads
\begin{align*}
 \lambda \int_{\Omega} u \overline{v} \, d x + \mathfrak{a}_{\mu} (u , v) = - \int_{\Omega} F \cdot \overline{\nabla v} \, d x \qquad (v \in \cH^1_{\sigma} (\Omega)).
\end{align*}
Again, one can also associate a pressure function to this solution $\phi \in L^2 (\Omega)$, so that
\begin{align}
\label{Eq: Weak resolvent Neumann}
 \lambda \int_{\Omega} u \overline{v} \, d x + \mathfrak{a}_{\mu} (u , v) - \int_{\Omega} \phi \, \overline{\dive (v)} \, d x = - \int_{\Omega} F \cdot \overline{\nabla v} \, d x \qquad (v \in H^1 (\Omega)^d).
\end{align}
The boundary condition that is modelled here is given by
\begin{align*}
 \{ D u + \mu [D u]^{\top} \} n - \phi n = - F^{\top} n \quad \text{on } \partial \Om.
\end{align*}
The Lax-Milgram lemma guarantees the existence of weak solutions to both the resolvent problems~\eqref{Eq: Strong resolvent Neumann} and~\eqref{Eq: Weak resolvent Neumann}. We will often denote the solution $u$ to~\eqref{Eq: Strong resolvent Neumann} by $(\lambda + A_{\mu})^{-1} f$ and the associated pressure by $\phi = \Phi_{\lambda} f$. Moreover, the solution $u$ to~\eqref{Eq: Weak resolvent Neumann} will be denoted by $(\lambda + A_{\mu})^{-1} \mathcal{Q} \dive(F)$ and the associated pressure by $\phi = \Phi_{\lambda} \mathcal{Q} \dive(F)$. Testing the resolvent problem with $v = u$ or with $v = \nabla \Delta_D^{-1} \phi$, where $\Delta_D$ denotes the Dirichlet Laplacian on $\Omega$ yields the following basic $L^2$-result, cf.~\cite[Prop.~2.3]{Tolksdorf_convex} and~\cite[Prop.~3.1]{Tolksdorf_convex}:

\begin{prop}
\label{Prop: L2-case Neumann}
Let $\Omega \subseteq \R^d$, $d \geq 2$, be a bounded convex domain and $\theta \in [0 , \pi)$. For all $\mu \in (-1 , 1)$ we have $\Sigma_{\theta} \subseteq \rho(- A_{\mu})$. Moreover, there exists $C > 0$ depending only on $d$, $\theta$, and $\mu$ such that for all $f \in \sL^2_{\sigma} (\Omega)$ and all $\lambda \in \Sigma_{\theta}$ we have
\begin{align*}
 \lvert \lambda \rvert \| (\lambda + A_{\mu})^{-1} f \|_{\sL^2_{\sigma} (\Omega)} + \lvert \lambda \rvert^{\frac{1}{2}} \| \nabla (\lambda + A_{\mu})^{-1} f \|_{L^2 (\Omega)^{d \times d}} + \lvert \lambda \rvert^{\frac{1}{2}} \| \Phi_{\lambda} f \| \leq C \| f \|_{\sL^2_{\sigma} (\Omega)}.
\end{align*}
Moreover, there exists $C > 0$ depending only on $d$, $\theta$, and $\mu$ such that for all $F \in L^2 (\Omega)^{d \times d}$ and all $\lambda \in \Sigma_{\theta}$ we have
\begin{align*}
 \lvert \lambda \rvert^{\frac{1}{2}} \| (\lambda + A_{\mu})^{-1} \mathcal{Q} \dive(F) \|_{\sL^2_{\sigma} (\Omega)} + \| \nabla (\lambda + A_{\mu})^{-1} \mathcal{Q} \dive(F) \|_{L^2 (\Omega)^{d \times d}} + \| \Phi_{\lambda} \mathcal{Q} \dive(F) \| \leq C \| F \|_{L^2 (\Omega)}.
\end{align*}
\end{prop}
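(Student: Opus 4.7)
The plan is to invoke Lax-Milgram on the form $\mathfrak{a}_\mu$ to produce weak solutions, then extract the quantitative bounds by two carefully chosen test functions; the convexity of $\Om$ enters only at the pressure step, through Kadlec's $H^2$-regularity of the Dirichlet Laplacian. First, observe that for $\mu \in (-1,1)$ the form $\mathfrak{a}_\mu$ is symmetric (since $\overline{\mathfrak{a}_\mu(u,u)} = \mathfrak{a}_\mu(u,u)$ by swapping the dummy indices $\alpha \leftrightarrow \beta$) and satisfies $\mathfrak{a}_\mu(u,u) \ge (1-|\mu|)\|\nabla u\|_{L^2}^2$ via the pointwise bound $|\mu\, \partial_\beta u_\alpha\, \overline{\partial_\alpha u_\beta}| \le |\mu|\,|\nabla u|^2$. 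A standard argument rotation renders $(u,v)\mapsto \lambda\langle u,v\rangle + \mathfrak{a}_\mu(u,v)$ coercive on $\cH^1_\sigma(\Om)$ for every $\lambda \in \Sigma_\theta$, with constants depending only on $d,\theta,\mu$. Lax-Milgram then produces a unique $u \in \cH^1_\sigma(\Om)$ in each case, and de Rham's lemma as used in~\cite[Thm.~6.8]{MiMoWr} recovers the pressure $\phi \in L^2(\Om)$ so that~\eqref{Eq: Strong resolvent Neumann} and~\eqref{Eq: Weak resolvent Neumann} hold for all $v \in H^1(\Om)^d$.

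Second, the velocity bounds are extracted by testing with $v = u$: since $\dive u = 0$ the pressure term drops, leaving $\lambda\|u\|^2 + \mathfrak{a}_\mu(u,u) = \langle f,u\rangle$ in the strong case and $\lambda\|u\|^2 + \mathfrak{a}_\mu(u,u) = -\int_\Om F\cdot \overline{\nabla u}\,dx$ in the weak case. Splitting into real and (rotated) imaginary parts together with the coercivity of $\mathfrak{a}_\mu$ yields the two halves of the estimates: $|\lambda|\|u\| + |\lambda|^{1/2}\|\nabla u\| \lesssim \|f\|$ in the strong case, and $|\lambda|^{1/2}\|u\| + \|\nabla u\| \lesssim \|F\|_{L^2}$ in the weak case.

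Third, the pressure bound is obtained by inserting the auxiliary test function $v := \nabla \Delta_D^{-1}\phi$. The convexity of $\Om$ enters through Kadlec's theorem, which guarantees that $\Delta_D^{-1}$ maps $L^2(\Om)$ boundedly into $H^2(\Om) \cap H^1_0(\Om)$; hence $v \in H^1(\Om)^d$ with $\|v\|_{H^1} \lesssim \|\phi\|$, and by construction $\dive v = \phi$. Substituting this $v$ into the full weak formulation and isolating the divergence term yields the identity
\begin{align*}
 \|\phi\|^2 = \lambda \langle u,v\rangle + \mathfrak{a}_\mu(u,v) - \langle f,v\rangle,
\end{align*}
with the analogous right-hand side $\lambda\langle u,v\rangle + \mathfrak{a}_\mu(u,v) + \int_\Om F\cdot \overline{\nabla v}\,dx$ in the weak case. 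Cauchy-Schwarz on each summand, combined with $\|v\|_{H^1} \lesssim \|\phi\|$ and the velocity bounds from Step~2, then produces the claimed bounds on $\Phi_\lambda f$ and $\Phi_\lambda \mathcal{Q}\dive(F)$, as carried out in detail in~\cite[Prop.~2.3, Prop.~3.1]{Tolksdorf_convex}. The main obstacle is precisely the admissibility of $v = \nabla\Delta_D^{-1}\phi$ as an $H^1(\Om)^d$-function: on a general Lipschitz domain $\Delta_D^{-1}$ only maps $L^2$ into $H^1_0$ (not $H^2$), so $\nabla\Delta_D^{-1}\phi$ is merely $L^2$ and the pressure test collapses. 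Convexity of $\Om$ removes this single obstruction via Kadlec's theorem, and is used nowhere else in the proof.
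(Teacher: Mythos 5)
Your overall skeleton --- Lax--Milgram for existence, testing with $v=u$ for the velocity bounds, testing with $v=\nabla\Delta_D^{-1}\phi$ for the pressure, with convexity entering only through Kadlec's $H^2$-estimate for $\Delta_D$ --- is exactly the route the paper takes (it names precisely these two test functions and refers to \cite[Props.~2.3 and~3.1]{Tolksdorf_convex}), and your Steps~1 and~2 are sound. But Step~3 as you execute it has a genuine gap: estimating the identity $\|\phi\|^2=\lambda\langle u,v\rangle+\mathfrak{a}_\mu(u,v)-\langle f,v\rangle$ by ``Cauchy--Schwarz on each summand, combined with $\|v\|_{H^1}\lesssim\|\phi\|$ and the velocity bounds'' does not produce the stated $\lambda$-weights. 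In the strong case $|\lambda|\,|\langle u,v\rangle|\le|\lambda|\|u\|\|v\|\lesssim\|f\|\,\|\phi\|$ and $|\langle f,v\rangle|\lesssim\|f\|\,\|\phi\|$, which only yields $\|\phi\|\lesssim\|f\|$ rather than the required decay $\|\phi\|\lesssim|\lambda|^{-1/2}\|f\|$; in the weak case it is worse, since $|\lambda|\|u\|\|v\|\lesssim|\lambda|^{1/2}\|F\|\,\|\phi\|$ grows with $|\lambda|$ and destroys the uniform bound $\|\Phi_\lambda\mathcal{Q}\dive(F)\|\lesssim\|F\|_{L^2}$ altogether.

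The missing idea is the orthogonal decomposition $L^2(\Om)^d=\sL^2_\sigma(\Om)\oplus\nabla H^1_0(\Om)$, which the paper records for precisely this purpose: since $w:=\Delta_D^{-1}\phi\in H^2(\Om)\cap H^1_0(\Om)$, the test function $v=\nabla w$ lies in $\nabla H^1_0(\Om)$, and hence $\langle u,v\rangle=\langle f,v\rangle=0$ because $u$ and $f$ are solenoidal. The identity then collapses to $\|\phi\|^2=\mathfrak{a}_\mu(u,v)$ in the strong case, respectively $\|\phi\|^2=\mathfrak{a}_\mu(u,v)+\int_\Om F\cdot\overline{\nabla v}\,dx$ in the weak case, and Kadlec's inequality $\|\nabla v\|=\|\nabla^2 w\|\le\|\Delta w\|=\|\phi\|$ gives $\|\phi\|\le(1+|\mu|)\|\nabla u\|$, respectively $\|\phi\|\le(1+|\mu|)\|\nabla u\|+\|F\|_{L^2}$; the velocity bounds from your Step~2 then yield exactly the claimed weights. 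Note also that your intermediate bound $\|v\|_{H^1}\lesssim\|\phi\|$ hides a Poincar\'e constant depending on $\Om$, which would contradict the assertion that $C$ depends only on $d$, $\theta$, and $\mu$; with the orthogonality in hand only $\|\nabla v\|\le\|\phi\|$ is needed, and on a convex domain this holds with constant~$1$, so no domain-dependent quantity enters.
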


Since in Hilbert spaces uniform boundedness and $R$-boundedness are equivalent, we can read the above proposition as an $R$-boundedness result for the operator families
\begin{align*}
 f \mapsto \lambda (\lambda + A_{\mu})^{-1} f, \quad f \mapsto \lvert \lambda \rvert^{\frac{1}{2}} \nabla (\lambda + A_{\mu})^{-1} f, \quad f \mapsto \lvert \lambda \rvert^{1 / 2} \Phi_{\lambda} f \qquad (\lambda \in \Sigma_{\theta})
\end{align*}
as well as
\begin{align*}
 F \mapsto \lvert \lambda \rvert^{\frac{1}{2}} (\lambda + A_{\mu})^{-1} \mathcal{Q} \dive(F), \quad F \mapsto \nabla (\lambda + A_{\mu})^{-1} \mathcal{Q} \dive(F), \quad F \mapsto \Phi_{\lambda} \mathcal{Q} \dive(F) \qquad (\lambda \in \Sigma_{\theta}).
\end{align*}
As a first step towards an application of Corollary~\ref{cor:abstract} we establish the following $R$-boundedness result in the $L^p$-scale.

\begin{theorem}
\label{Thm: R-secoriality Neumann}
Let $\Omega \subseteq \R^d$, $d \geq 2$, be a bounded and convex domain and $r_0 > 0$ be such that $B(0 , r_0) \subseteq \tfrac{1}{2} [\Omega - \{x_0\}]$ for some $x_0 \in \Omega$. Let further $\theta \in [0 , \pi)$, $\mu \in (-1 , \sqrt{2} - 1)$, and let
\begin{align*}
 \Big\lvert \frac{1}{p} - \frac{1}{2} \Big\rvert < \frac{1}{d}\cdotp
\end{align*}
Then the families of operators
\begin{align}
\label{Eq: R-bounded resolvents}
 \big\{ \lambda (\lambda + A_{\mu})^{-1} : \lambda \in \Sigma_{\theta} \big\} &\subseteq \cL (\sL^p_{\sigma} (\Omega)),  \\
 \label{Eq: R-bounded gradients} \big\{ \lvert \lambda \rvert^{\frac{1}{2}} \nabla (\lambda + A_{\mu})^{-1} : \lambda \in \Sigma_{\theta} \big\} &\subseteq \cL (\sL^p_{\sigma} (\Omega) , L^p (\Omega)^{d \times d}), \\
 \label{Eq: R-bounded full} \big\{ \nabla (\lambda + A_{\mu})^{-1} \mathcal{Q} \dive : \lambda \in \Sigma_{\theta} \big\} &\subseteq \cL (L^p (\Omega)^{d \times d}) \quad \text{and} \\
  \label{Eq: R-bounded divergence} \big\{ \lvert \lambda \rvert^{\frac{1}{2}} (\lambda + A_{\mu})^{-1} \mathcal{Q} \dive : \lambda \in \Sigma_{\theta} \big\} &\subseteq \cL (L^p (\Omega)^{d \times d} , \sL_{\sigma}^p (\Omega))
\end{align}
are $R$-bounded. If, in addition, $p \geq 2$ we have that
\begin{align}
\label{Eq: R-bounded pressure}
  \big\{ \lvert \lambda \rvert^{\frac{1}{2}} \Phi_{\lambda} : \lambda \in \Sigma_{\theta} \big\} &\subseteq \cL (\sL^p_{\sigma} (\Omega) , L^p (\Omega))  \quad \text{and} \\
\label{Eq: R-bounded pressure, weak}
  \big\{ \Phi_{\lambda} \mathcal{Q} \dive : \lambda \in \Sigma_{\theta} \big\} &\subseteq \cL (\sL^p_{\sigma} (\Omega)^{d \times d} , L^p (\Omega))
\end{align}
are $R$-bounded. In particular, there exists a constant $C > 0$ depending only on $d$, $p$, $\theta$, $\mu$, $\diam (\Omega)$, and $r_0$ such that the $R$-bounds of all these sets of operators are bounded by $C$.
\end{theorem}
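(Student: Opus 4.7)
The strategy is to lift the $L^2$ $R$-boundedness of Proposition~\ref{Prop: L2-case Neumann} to $L^p$ in the range $\lvert 1/p-1/2\rvert<1/d$ by means of a Shen-type reverse H\"older argument adapted to the convex setting, enhancing the proof of the resolvent estimates in~\cite{Tolksdorf_convex} so that it produces $R$-bounds instead of merely uniform bounds. The $L^2$ case of Proposition~\ref{Prop: L2-case Neumann} is already an $R$-bound since $R$-boundedness coincides with uniform boundedness on Hilbert space, and this serves as the seed of the extrapolation.

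The first step is a local reverse H\"older inequality for the resolvent problem. Fix $\la\in\Si_\theta$, put $u:=(\la+A_\mu)^{-1}f$ and $\phi:=\Phi_\la f$. On a ball $B$ with $2B\subseteq\Om$ on which $f$ vanishes, $(u,\phi)$ solves the homogeneous Stokes-type system $\la u-\Delta u+\nabla\phi=0$, $\dive u=0$ on $2B$. Comparing $(u,\phi)$ with the local steady Stokes flow $(w,\psi)$ having the same boundary data on $\partial B$, and using the $H^2$-regularity of the steady Stokes problem on balls (interior) and on $\Om\cap 2B$ for $B$ centred on $\partial\Om$ (boundary case, via~\cite{MiMoWr}), one obtains
\begin{equation*}
\Bigl(\frac{1}{\lvert B\rvert}\int_{B}\lvert\nabla u\rvert^{q}\,dx\Bigr)^{1/q}\lesssim\Bigl(\frac{1}{\lvert 2B\rvert}\int_{2B}\lvert\nabla u\rvert^{2}\,dx\Bigr)^{1/2}+\lvert\la\rvert^{1/2}\Bigl(\frac{1}{\lvert 2B\rvert}\int_{2B}\lvert u\rvert^{2}\,dx\Bigr)^{1/2}
\end{equation*}
for some $q>2$ depending on $d$ and $\mu$. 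The convexity of $\Om$ is essential in the boundary estimate through the $H^2$-regularity of~\cite{MiMoWr}, which is where the restriction $\mu\in(-1,\sqrt{2}-1)$ enters. An entirely analogous local estimate holds for the weak right-hand side $\mathcal{Q}\dive(F)$.

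The second step is Shen's $L^p$-extrapolation lemma. Combined with the $L^2$-bounds of Proposition~\ref{Prop: L2-case Neumann} (which absorb the extra $\lvert\la\rvert^{1/2}u$-term), the reverse H\"older estimate yields uniform $L^p$-bounds for the gradient families in the stated range of $p$. To obtain $R$-bounds rather than uniform bounds I would run Shen's argument on $\mathrm{Rad}$-valued functions, i.e.\ with $L^p(\Om)$ replaced by $L^p(\Om;\mathrm{Rad})$, where $\mathrm{Rad}$ denotes the closed linear span of a Rademacher system in $L^2(0,1)$: the good-$\la$ inequality and Calder\'on--Zygmund decomposition at the heart of Shen's proof are insensitive to a $UMD$ target, so the same proof delivers the $R$-bounds~\eqref{Eq: R-bounded gradients} and~\eqref{Eq: R-bounded full}. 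The resolvent families~\eqref{Eq: R-bounded resolvents} and~\eqref{Eq: R-bounded divergence} follow from the gradient ones using the identity $\la(\la+A_\mu)^{-1}=I-A_\mu(\la+A_\mu)^{-1}$ and a Poincar\'e-type estimate, and the pressure families~\eqref{Eq: R-bounded pressure} and~\eqref{Eq: R-bounded pressure, weak} are obtained by testing the weak formulation with the gradient of a Dirichlet-Laplace potential, as in Proposition~\ref{Prop: L2-case Neumann}; the restriction $p\ge 2$ reflects that the pressure estimate does not self-improve below exponent $2$.

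The main obstacle I expect is the $\mathrm{Rad}$-valued execution of Shen's lemma uniformly in $\la\in\Si_\theta$, so that the final $R$-bound depends only on $d$, $p$, $\theta$, $\mu$, $\diam(\Om)$, and $r_0$. A secondary difficulty is handling the pressure in the boundary reverse H\"older inequality, since $\phi$ satisfies no clean boundary condition; the remedy is to eliminate $\phi$ locally by testing with a solenoidal cut-off built from a Bogovskii-type operator, as in~\cite{MiMoWr, Tolksdorf_convex}.
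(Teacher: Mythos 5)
Your overall strategy is essentially the paper's: seed a vector-valued version of Shen's $L^p$-extrapolation with the $L^2$ theory of Proposition~\ref{Prop: L2-case Neumann}, prove a local weak reverse H\"older estimate using convexity-based second-order regularity, and use duality for the range below $2$ and for the weak right-hand side $\mathcal{Q}\dive(F)$. Your $\mathrm{Rad}$-valued formulation of Shen's lemma is, by Khintchine's inequality, the same device as the paper's $\ell^2$-valued square-function formulation (the paper cites a ready-made vector-valued extrapolation theorem rather than re-running the good-$\lambda$ argument), and the paper likewise performs a second extrapolation run for the operator with data $\mathcal{Q}\dive(F)$ and then dualizes.

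There is, however, a genuine gap in how you produce the resolvent family \eqref{Eq: R-bounded resolvents}. You propose to deduce it from the gradient family via $\lambda(\lambda+A_\mu)^{-1}=I-A_\mu(\lambda+A_\mu)^{-1}$ together with a Poincar\'e-type estimate. This fails for two reasons. First, $A_\mu(\lambda+A_\mu)^{-1}f$ is not controlled by $\nabla(\lambda+A_\mu)^{-1}f$: an estimate of the form $\|A_\mu^{1/2}u\|_{L^p}\lesssim\|\nabla u\|_{L^p}$ for $p\neq 2$ is an $L^p$ Riesz-transform (Kato-type) bound that is not available here, and even granting it you would still need $R$-bounds for $A_\mu^{1/2}(\lambda+A_\mu)^{-1}$, which is circular. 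Second, the $\lambda$-scaling is off: a Poincar\'e inequality would give $|\lambda|\,\|u\|_{L^p}\lesssim |\lambda|\,\|\nabla u\|_{L^p}=|\lambda|^{1/2}\cdot\big(|\lambda|^{1/2}\|\nabla u\|_{L^p}\big)$, a bound that degenerates like $|\lambda|^{1/2}$ as $|\lambda|\to\infty$; moreover $A_\mu$ has a nontrivial kernel (it is not invertible, which is why the paper only treats $\eps+A_\mu$ in Theorem~\ref{thm:Hinfty-NeumannStokes}), so the naive Poincar\'e estimate is unavailable on $\sL^2_\sigma(\Omega)$ in the first place.

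The same structural problem already infects your reverse H\"older inequality: the term $|\lambda|^{1/2}\big(\tfrac{1}{|2B|}\int_{2B}|u|^2\big)^{1/2}$ on its right-hand side is neither part of the datum $f$ (which vanishes on $2B$) nor part of your operator $T$ if $T$ consists of gradients only, so it cannot be absorbed within Shen's scheme, whose hypothesis \eqref{Eq: Generalized weak reverse Hoelder inequality IR^d} only tolerates local averages of $\|Tf\|_Y$ and $\|f\|_X$. The paper's remedy, which you should adopt, is to make all three quantities $\lambda_k u_k$, $|\lambda_k|^{1/2}\nabla u_k$, $|\lambda_k|^{1/2}\phi_k$ simultaneous components of the vector-valued operator $T$ in \eqref{Eq: Approximate operators}, and to verify the weak reverse H\"older estimate for this full $T$ -- via the splitting $u=v+w$, where $v$ solves the \emph{same resolvent problem} (not the steady Stokes problem, which is what forces the stray $|\lambda|^{1/2}u$ term in your version) on the enlarged set, followed by Caccioppoli and the localized second-order estimates on the smooth convex approximations $\Omega_\ell$. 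Then resolvent, gradient and pressure families extrapolate at once for $2<p<2d/(d-2)$, and \eqref{Eq: R-bounded resolvents} in the range $2d/(d+2)<p<2$ follows by duality from self-adjointness, not from the gradient family.
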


This theorem is an extension of the main result of~\cite[Thm.~1.1]{Tolksdorf_convex} which states uniform bounds of the operator families above instead of $R$-bounds. 
We want to mention that, for bounded domains with $C^{2,1}$-boundary, $R$-boundedness of corresponding operator families and maximal $L^p$-regularity for Stokes operators with Neumann type boundary conditions have been shown by Shibata and Shimizu (\cite{ShibataShimizu}) by Fourier multiplier results, perturbation, and localization.
The proof of Theorem~\ref{Thm: R-secoriality Neumann} uses the characerization of $R$-boundedness by virtue of square function estimates and then adapts the lines of the proof of~\cite[Thm.~1.1]{Tolksdorf_convex} to these square functions. Thus, we advise the reader to keep a copy of~\cite{Tolksdorf_convex} handy. We begin with the case $p > 2$.

\medbreak

\subsection*{Step~1: Reformulation into an $\ell^2$-valued boundedness estimate in the case $p > 2$}

If $V$ is a subspace of $L^p (\Omega ; \C^m)$ for some $1 < p < \infty$, $m \in \N$ and $\Omega \subseteq \R^d$ Lebesgue measurable, then there exists $C > 0$ such that for all $k_0 \in \N$ and $(f_k)_{k = 1}^{k_0} \subseteq V$ we have
\begin{align*}
 \frac{1}{C} \Big\| \sum_{k = 1}^{k_0} \eps_k (\cdot) f_k \Big\|_{L^2 (0 , 1 ; V)} \leq \Big\| \Big[ \sum_{k = 1}^{k_0} \lvert f_k \rvert^2 \Big]^{1 / 2} \Big\|_{L^p (\Omega)} \leq C \Big\| \sum_{k = 1}^{k_0} \eps_k (\cdot) f_k \Big\|_{L^2 (0 , 1 ; V)},
\end{align*}
where $\eps_1 , \dots , \eps_{k_0}$ are independent and symmetric random variables with values in $\{-1 , 1\}$. This means, that $R$-boundedness in subspaces of $L^p$ is equivalent to so-called \textit{square function estimates}. \par
 We use this fact for $p > 2$ and conclude by density that the three families of operators in~\eqref{Eq: R-bounded resolvents}-\eqref{Eq: R-bounded full} are $R$-bounded if and only if there exists $C > 0$ such that for all $k_0 \in \N$, all $\lambda_1 , \dots , \lambda_{k_0} \in \Sigma_{\theta}$ and all $f_1 , \dots , f_{k_0} \in C^{\infty}_{\sigma} (\overline{\Omega}_{\ell})$ the following estimate is valid
\begin{align*}
 \Big\| \Big[ \sum_{k = 1}^{k_0} \Big( \lvert \lambda_k (\lambda_k + A_{\mu})^{-1} f_k \rvert^2 + \lvert \lvert \lambda_k \rvert^{\frac{1}{2}} \nabla (\lambda_k + A_{\mu})^{-1} f_k \rvert^2 &+ \lvert \lvert \lambda_k \rvert^{\frac{1}{2}} \Phi_{\lambda_k} f_k \rvert^2 \Big) \Big]^{1 / 2} \Big\|_{L^p(\Omega)} \\
&\qquad\qquad \leq C \Big\| \Big[ \sum_{k = 1}^{k_0} \lvert f_k \rvert^2 \Big]^{1 / 2} \Big\|_{L^p(\Omega)}.
\end{align*}
If we define $u_k := (\lambda_k + A_{\mu})^{-1} f_k$ and $\phi_k := \Phi_{\lambda_k} f_k$, then this is equivalent to establishing that all operators of the form
\begin{align}
\label{Eq: Vector valued operator}
 T f := T (f_k)_{k = 1}^{k_0} := \left( \begin{pmatrix} \lambda_1 u_{1} \\ \vdots \\ \lambda_{k_0} u_{k_0} \end{pmatrix} , \begin{pmatrix} \lvert \lambda_1 \rvert^{1/2} \nabla u_{1} \\ \vdots \\ \lvert \lambda_{k_0} \rvert^{1/2} \nabla u_{k_0} \end{pmatrix} , \begin{pmatrix} \lvert \lambda_1 \rvert^{1/2} \phi_{1} \\ \vdots \\ \lvert \lambda_{k_0} \rvert^{1/2} \phi_{k_0} \end{pmatrix} \right)
\end{align}
extend to a uniform bounded family of operators from $\sL^p_{\sigma} (\Omega ; X)$ to $L^p (\Omega ; Y)$, where $X = \ell^2 (\C^d)$ and $Y = \ell^2(\C^d) \times \ell^2 (\C^{d \times d}) \times \ell^2 (\C)$, and where $(\lambda_k)_{k = 1}^{k_0} \subseteq \Sigma_{\theta}$ is arbitrary. Here, we regard a finite dimensional vector naturally as an element in $\ell^2$ by regarding this vector as a finite sequence and by filling up the remaining components by $0$. It is hence our task to bound the family of all such operators uniformly in these vector-valued $L^p$-spaces.


\subsection*{Step~2: Verification of the vector-valued boundedness for certain values of $p > 2$}

We employ a variant of the vector-valued version of Shen's $L^p$-extrapolation theorem which can be found, e.g., in~\cite[Thm.~4.2]{Tolksdorf_non-local}. This will be achieved by adapting~\cite[Section~6]{Tolksdorf_convex} to the vector-valued situation. In the following, $Q = Q(x_0 , r) \subseteq \R^d$ denotes a dyadic cube with center $x_0 \in \R^d$ and $\diam(Q) = r$ and $Q^*$ denotes its unique dyadic parent. Moreover, $M_{2 Q^*}$ will denote the localized maximal function on $2 Q^*$. We now state the extrapolation theorem.

\begin{theorem}
\label{Thm: Modified Shen whole space}
Let $X$ and $Y$ be Banach spaces and $\Xi \subseteq \R^d$ be measurable. Let further $2 < p < p_0$, $f \in L^2 (\Xi ; X) \cap L^p (\Xi ; X)$, and let $T$ be an operator such that $T f$ is defined and contained in $L^2 (\Xi ; Y)$. \par
Suppose that there exist constants $\iota > 1$ and $C > 0$ such that for all $\alpha > 0$ and all dyadic cubes $Q = Q(x_0 , r)$ with $r > 0$ and $x_0 \in \R^d$ the estimate
\begin{align}
\label{Eq: Generalized weak reverse Hoelder inequality IR^d}
 \begin{aligned}
 &\lvert \{ x \in Q : M_{2 Q^*} (E_0 \| T f \|_Y^2) (x) > \alpha \} \rvert \leq \frac{C}{\alpha} \int_{(2 \iota Q^*) \cap \Xi} \| f \|_X^2  \, d x \\
 &\qquad\qquad\qquad\qquad+ \frac{C \lvert Q \rvert}{\alpha^{p_0 / 2}} \bigg\{ \sup_{Q^{\prime} \supset 2 Q^*} \bigg( \frac{1}{\lvert Q^{\prime} \rvert} \int_{Q^{\prime} \cap \Xi} \big( \| T f \|_Y^2 + \| f \|_X^2 \big) \, d x \bigg)^{\frac{1}{2}} \bigg\}^{p_0}
 \end{aligned}
\end{align}
holds. Here the supremum runs over all cubes $Q^{\prime}$ containing $2 Q^*$ and $E_0$ denotes the extension to $\R^d$ by zero. \par
Then there exists a constant $K > 0$ depending on $d$, $p$, $p_0$, $\iota$, and $C$ such that
\begin{align*}
 \| T f \|_{L^p (\Xi ; Y)} \leq K \| f \|_{L^p (\Xi ; X)}.
\end{align*}
\end{theorem}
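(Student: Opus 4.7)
The plan is to prove Theorem~\ref{Thm: Modified Shen whole space} by adapting Shen's $L^p$-extrapolation technique to the present vector-valued setting, along the lines of the scalar version used in~\cite{Tolksdorf_non-local}. The key observation is that both the hypothesis \eqref{Eq: Generalized weak reverse Hoelder inequality IR^d} and the conclusion involve only the scalar nonnegative quantities $\|Tf\|_Y$ and $\|f\|_X$; the Banach-space structure of $X$ and $Y$ enters solely through the definition of these norms, so after passing to $g := E_0 \|Tf\|_Y^2$ and $h := E_0 \|f\|_X^2$ on $\R^d$, the argument reduces to a scalar estimate that follows Shen's route essentially verbatim.

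First, by density and a standard truncation argument justified by the standing assumption $Tf \in L^2(\Xi;Y)$, I would reduce to establishing the bound $\|g\|_{L^{p/2}(\R^d)} \lesssim \|h\|_{L^{p/2}(\R^d)}$. By the Lebesgue differentiation theorem it suffices to control $\|Mg\|_{L^{p/2}(\R^d)}$, where $M$ denotes the Hardy--Littlewood maximal function. For each threshold $\alpha_0 > 0$ above some baseline, I would perform a Calder\'on--Zygmund decomposition of the open set $\{Mg > \alpha_0\}$ into a disjoint family of maximal dyadic cubes $\{Q_j\}$ satisfying $\alpha_0 < |Q_j|^{-1}\int_{Q_j} g \leq 2^d \alpha_0$ and, by maximality of the dyadic parent, $|Q_j^*|^{-1}\int_{Q_j^*} g \leq \alpha_0$.

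The heart of the argument is to apply \eqref{Eq: Generalized weak reverse Hoelder inequality IR^d} on each $Q = Q_j$ at a level $\alpha = A\alpha_0$ for a large parameter $A > 1$. The first term on the right contributes $\frac{C}{A\alpha_0}\int_{2\iota Q_j^*\cap\Xi}\|f\|_X^2 \, dx$, a weak-type $(2,2)$-style term in $h$. For the second term, the supremum over $Q' \supset 2Q_j^*$ of the average of $g + h$ must be controlled using the CZ maximality: every such $Q'$ must contain a point $y$ outside $\{Mg > \alpha_0\}$ (after possibly enlarging the $Q_j^*$ by a fixed factor depending on $\iota$), so the average of $g$ over $Q'$ is at most $\alpha_0$, while the average of $h$ over $Q'$ is bounded by $Mh(y) \leq \inf_{y \in \hat Q_j} Mh(y)$ on a controlled enlargement $\hat Q_j$. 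Summing these estimates over $j$ and using that $\{Mg > A\alpha_0\}$ is contained in the union of the $Q_j$ yields a good-$\lambda$ inequality of schematic form
\[
\bigl|\{Mg > A\alpha_0\}\bigr| \leq C A^{-p_0/2} \bigl|\{Mg > \alpha_0\}\bigr| + C_A \bigl|\{\widetilde M h > c\alpha_0\}\bigr|,
\]
where $\widetilde M$ is a suitable (iterated or uncentered) maximal-type functional controlled by $M$ on $L^{p/2}$.

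Multiplying by $\alpha_0^{p/2-1}$ and integrating in $\alpha_0$, the first term on the right contributes $C A^{p/2-p_0/2}\|Mg\|_{L^{p/2}}^{p/2}$, which, since $p < p_0$, can be absorbed into the left-hand side by choosing $A$ sufficiently large. The second term contributes $C\|h\|_{L^{p/2}}^{p/2}$ by the Hardy--Littlewood maximal inequality (valid because $p/2 > 1$). This yields the desired estimate $\|Mg\|_{L^{p/2}} \lesssim \|h\|_{L^{p/2}}$, and hence the theorem. The main technical obstacle will be the localization bookkeeping in Step~3: one must verify that the supremum over \emph{all} cubes $Q' \supset 2Q_j^*$ appearing in \eqref{Eq: Generalized weak reverse Hoelder inequality IR^d} — involving an average of $\|Tf\|_Y^2$ and not only of $\|f\|_X^2$ — is genuinely controlled by $\alpha_0$ plus a maximal function of $h$ through the CZ stopping rule. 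Closing this loop requires a careful choice of enlargement constants consistent with the fixed $\iota$ and with the localized maximal operator $M_{2Q^*}$ on the left of the hypothesis, and is precisely what makes this a ``modified'' Shen theorem in the vector-valued setting.
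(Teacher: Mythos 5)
Your sketch is correct and follows the same route as the paper's source for this result: the paper does not prove Theorem~\ref{Thm: Modified Shen whole space} itself but quotes it from \cite[Thm.~4.2]{Tolksdorf_non-local}, whose proof is exactly the Shen/Caffarelli--Peral real-variable argument you outline --- a Calder\'on--Zygmund/good-$\lambda$ decomposition applied to the scalar functions $E_0\|Tf\|_Y^2$ and $E_0\|f\|_X^2$, with the hypothesis \eqref{Eq: Generalized weak reverse Hoelder inequality IR^d} invoked at level $A\alpha_0$ on the stopping cubes, the sup term tamed because a bounded dilate of each stopping cube meets $\{Mg\le\alpha_0\}$, and the absorption of $A^{(p-p_0)/2}\|Mg\|_{L^{p/2}}^{p/2}$ justified by truncating the level-set integral (finite since $g\in L^1$ and $p>2$). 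Your opening observation --- that the Banach-space structure of $X$ and $Y$ enters only through the scalar quantities $\|f\|_X$ and $\|Tf\|_Y$, so the scalar proof carries over verbatim --- is precisely the reason the vector-valued statement holds.
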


To get access to the results in~\cite[Section~4]{Tolksdorf_convex}, we need to regularize $\Omega$. For this purpose, fix $r_0 > 0$ such that $B(0 , r_0) \subseteq \tfrac{1}{2} [\Omega - \{x_0\}]$ for some $x_0 \in \Omega$ and let $(\Omega_{\ell})_{\ell \in \N}$ be the sequence of smooth, bounded and convex domains described in~\cite[Rem.~4.3]{Tolksdorf_convex}. Note, all of these domains have comparable diameter and there exists a constant $C > 0$, depending only on $r_0$, $\diam(\Omega)$ and $d$ such that for every cube $R$ with center $x_0 \in \partial \Omega_{\ell}$ and diameter $0 < r \leq 2 r_0$ we have
\begin{align}
\label{Eq: uniform d-set}
 \lvert R \cap \Omega_{\ell} \rvert \geq C r^d.
\end{align}
In the application of Theorem~\ref{Thm: Modified Shen whole space} we will take  $\Xi := \Omega_{\ell}$, $\ell \in \N$ and we define $p_0 := \frac{2d}{d - 2}$ if $d \geq 3$ and $2 < p_0 < \infty$ if $d = 2$. The operator $T$ will be a counterpart of the operator in~\eqref{Eq: Vector valued operator} on $\Omega_{\ell}$: \par
Given resolvent parameters $\lambda_1 , \dots , \lambda_{k_0} \in \Sigma_{\theta}$ and right-hand sides $f_1 , \dots , f_{k_0} \in C^{\infty}_{\sigma} (\overline{\Omega}_{\ell})$, let $u_{k , \ell} := (\lambda_k + A_{\mu , \ell})^{-1} f_k$, where $A_{\mu , \ell}$ denotes the Stokes operator with Neumann-type boundary conditions on $\Omega_{\ell}$. In addition, let $\phi_{k , \ell}$ denote the associated pressure. We now define
\begin{align}
\label{Eq: Approximate operators}
 T f := T (f_k)_{k = 1}^{k_0} := \left( \begin{pmatrix} \lambda_1 u_{1 , \ell} \\ \vdots \\ \lambda_{k_0} u_{k_0 , \ell} \end{pmatrix} , \begin{pmatrix} \lvert \lambda_1 \rvert^{1/2} \nabla u_{1 , \ell} \\ \vdots \\ \lvert \lambda_{k_0} \rvert^{1/2} \nabla u_{k_0 , \ell} \end{pmatrix} , \begin{pmatrix} \lvert \lambda_1 \rvert^{1/2} \phi_{1 , \ell} \\ \vdots \\ \lvert \lambda_{k_0} \rvert^{1/2} \phi_{k_0 , \ell} \end{pmatrix} \right)\cdotp
\end{align}
As above, we may fix $X = \ell^2 (\C^d)$ and $Y = \ell^2(\C^d) \times \ell^2 (\C^{d \times d}) \times \ell^2 (\C)$ in Theorem~\ref{Thm: Modified Shen whole space}. Now, that all notation is fixed, we can start to verify the conditions of Theorem~\ref{Thm: Modified Shen whole space}. For this purpose, we fix a dyadic cube $Q = Q(x_0 , r)$ with $2 Q^* \cap \Omega_{\ell} \neq \emptyset$. Note, if the intersection is empty, then $M_{2Q^*} (E_0 \| T f \|_Y^2)$ in~\eqref{Eq: Generalized weak reverse Hoelder inequality IR^d} would just be the zero function and the desired inequality would be trivial.

\subsection*{Case 1: We have $2 r > \sqrt{d}\, \diam(\Omega)$}

The conditions imposed on $Q^*$ and $r$ imply that for all $\ell \in \N$ we have $\Omega_{\ell} \subseteq 4 Q^*$. We use the weak-type $(1 , 1)$-estimate of the localized maximal operator and the $L^2$-boundedness of $T$ from Proposition~\ref{Prop: L2-case Neumann} to deduce
\begin{align*}
\big\lvert \big\{ x \in Q : M_{2 Q^*} (E_0 \| T f \|_Y^2) (x) > \alpha \big\} \big\rvert &\leq \frac{C_1}{\alpha} \| T f \|_{L^2 (\Omega_{\ell} ; Y)}^2 \\
&= \frac{C_1}{\alpha} \sum_{k = 1}^{k_0} \int_{\Omega_{\ell}} \Big(\lvert \lambda_k u_{k , \ell} \rvert^2 + \lvert \lvert \lambda_k \rvert^{\frac{1}{2}} \nabla u_{k , \ell} \rvert^2 + \lvert \lvert \lambda_k \rvert^{\frac{1}{2}} \phi_{k , \ell} \rvert^2 \Big) \, d x \\
&\leq \frac{C_1 C_2}{\alpha} \sum_{k = 1}^{k_0} \int_{\Omega_{\ell}} \lvert f_k \rvert^2 \, d x \\
&= \frac{C_1 C_2}{\alpha} \int_{(4Q^*) \cap \Omega_{\ell}} \| (f_k)_{k = 1}^{k_0} \|_X^2 \, d x.
\end{align*}
This proves~\eqref{Eq: Generalized weak reverse Hoelder inequality IR^d} in this particular case.

\subsection*{Case~2: We have $0 < 2 r \leq \sqrt{d}\, \diam(\Omega)$ and $2 Q^* \cap \partial \Omega_{\ell} \neq \emptyset$}

Let $y \in (2 Q^*) \cap \partial \Omega_{\ell}$ and let $\mathcal{R} := Q (y , 4 r) \subseteq \R^d$ be the cube with center $y$ and $\diam(\mathcal{R}) = 4 r$. In particular, we have $2 Q^* \subseteq \mathcal{R}$. We split each of the functions $u_{k , \ell}$ and $\phi_{k , \ell}$ in $8 \mathcal{R} \cap \Omega_{\ell}$ into two parts using functions $v_{k , \ell}$ and $w_{k , \ell}$ as well as $\vartheta_{k , \ell}$ and $\psi_{k , \ell}$. To define these functions, let $\widetilde{A}_{\mu , \ell}$ denote the Stokes operator subject to Neumann-type boundary conditions on $(8 \mathcal{R}) \cap \Omega_{\ell}$. Notice that the restriction of $f_k$ to $(8 \mathcal{R}) \cap \Omega_{\ell}$ is still in $C^{\infty}_{\sigma} (\overline{(8 \mathcal{R}) \cap \Omega_{\ell}})$ so that the following definition is meaningful
\begin{align*}
 v_{k , \ell} := (\lambda_k + \widetilde{A_{\mu , \ell}})^{-1} R_{(8\mathcal{R}) \cap \Omega_{\ell}} f_k \qquad \text{and} \qquad w_{k , \ell} := u_{k , \ell} - v_{k , \ell}.
\end{align*}
Here $R_{(8 \mathcal{R}) \cap \Omega_{\ell}}$ denotes the restriction operator to $(8 \mathcal{R}) \cap \Omega_{\ell}$. Analogously, define the pressures $\vartheta_{k , \ell}$ associated to $v_{k , \ell}$ and $R_{(8 \mathcal{R}) \cap \Omega_{\ell}} f_k$ and $\psi_{k , \ell} := \phi_{k , \ell} - \vartheta_{k , \ell}$. Thus, in the sense of distributions we have
\begin{align*}
\left\{ \begin{aligned}
 \lambda_k v_{k , \ell} - \Delta v_{k , \ell} + \nabla \vartheta_{k , \ell} &= R_{(8 \mathcal{R}) \cap \Omega_{\ell}} f_k && \text{in } (8 \mathcal{R}) \cap \Omega_{\ell} \\
 \dive(v_{k , \ell}) &= 0 && \text{in } (8 \mathcal{R}) \cap \Omega_{\ell} \\
 \{ D v_{k , \ell} + \mu [D v_{k , \ell}]^{\top} \} n^{\ell} - \vartheta_{k , \ell} n^{\ell} &= 0 && \text{on } \partial [(8 \mathcal{R}) \cap \Omega_{\ell}]
\end{aligned} \right.
\end{align*}
and
\begin{align*}
\left\{ \begin{aligned}
 \lambda_k w_{k , \ell} - \Delta w_{k , \ell} + \nabla \psi_{k , \ell} &= 0 && \text{in } (8 \mathcal{R}) \cap \Omega_{\ell} \\
 \dive(w_{k , \ell}) &= 0 && \text{in } (8 \mathcal{R}) \cap \Omega_{\ell} \\
 \{ D w_{k , \ell} + \mu [D w_{k , \ell}]^{\top} \} n^{\ell} - \psi_{k , \ell} n^{\ell} &= 0 && \text{on } (8 \mathcal{R}) \cap \partial \Omega_{\ell}.
\end{aligned} \right.
\end{align*}
Here, $n^{\ell}$ denotes the outward unit normal vector corresponding to the set $(8 \mathcal{R}) \cap \Omega_{\ell}$. Notice that in $(8 \mathcal{R}) \cap \Omega_{\ell}$ the identities $u_{k , \ell} = v_{k , \ell} + w_{k , \ell}$ and $\phi_{k , \ell} = \vartheta_{k , \ell} + \psi_{k , \ell}$ hold and that $w_{k , \ell}$ and $\vartheta_{k , \ell}$ are in general non-zero as there is no boundary condition on the remaining boundary part $\partial [(8 \mathcal{R}) \cap \Omega_{\ell}] \setminus [(8 \mathcal{R}) \cap \partial \Omega_{\ell}]$ imposed. Let $E_0 v_{k , \ell}, E_0 \nabla v_{k , \ell}, E_0 \vartheta_{k , \ell}, E_0 w, E_0 \nabla w_{k , \ell}$, and $E_0 \psi_{k , \ell}$ denote the extensions by zero to $\R^d$. For given $\alpha > 0$ we estimate
\begin{align*}
 \bigg\lvert &\bigg\{ x \in Q : M_{2 Q^*} (E_0 \| T f \|_Y^2) (x) > \alpha \bigg\} \bigg\rvert \\
 &\leq \bigg\lvert \bigg\{ x \in Q : M_{2 Q^*} \bigg( \sum_{k = 1}^{k_0} \Big(\lvert \lambda_k E_0 v_{k , \ell} \rvert^2 + \lvert \lvert \lambda_k \rvert^{1 / 2} E_0 \nabla v_{k , \ell} \rvert^2 + \lvert \lvert \lambda_k \rvert^{1 / 2} E_0 \vartheta_{k , \ell} \rvert^2\Big)\bigg) (x) > \frac{\alpha}{4} \bigg\} \bigg\rvert \\
 &\qquad + \bigg\lvert \bigg\{ x \in Q : M_{2 Q^*} \bigg( \sum_{k = 1}^{k_0} \Big(\lvert \lambda_k E_0 w_{k , \ell} \rvert^2 + \lvert \lvert \lambda_k \rvert^{1 / 2} E_0\nabla w_{k , \ell} \rvert^2 + \lvert \lvert \lambda_k \rvert^{1 / 2} E_0 \psi_{k , \ell} \rvert^2\Big) \bigg) (x) > \frac{\alpha}{4} \bigg\} \bigg\rvert \\
 &=: \mathrm{I} + \mathrm{II}.
\end{align*}
The first term is controlled by the weak-type $(1 , 1)$-estimate of the localized maximal operator followed by the $L^2$-bounds in Proposition~\ref{Prop: L2-case Neumann} yielding
\begin{align*}
 \mathrm{I} &\leq \frac{C}{\alpha} \int_{(2 Q^*) \cap \Omega_{\ell}} \sum_{k = 1}^{k_0} \Big( \lvert \lambda_k v_{k , \ell} \rvert^2 + \lvert \lvert \lambda_k \rvert^{\frac{1}{2}} \nabla v_{k , \ell} \rvert^2 + \lvert \lvert \lambda_k \rvert^{\frac{1}{2}} \vartheta_{k , \ell} \rvert^2 \Big) \ d x \leq \frac{C}{\alpha} \int_{(32 Q^*) \cap \Omega_{\ell}} \| (f_k)_{k = 1}^{k_0} \|_X^2 \, d x,
\end{align*}
where $C > 0$ depends only on $d$, $\theta$, and $\mu$. \par
We now turn to the term $\mathrm{II}$. Usually, controlling this term requires some type of smoothing estimate in the $L^p$-scale achieved via reverse H\"older-type estimates. Recall that $p_0$ was chosen to be $p_0 := \frac{2d}{d - 2}$ if $d \geq 3$ and $p_0 > 2$ arbitrary if $d = 2$. We start by using the embedding $L^{p_0 / 2} (2 Q^*) \hookrightarrow L^{p_0 / 2 , \infty} (2 Q^*)$, the $L^{p_0 / 2}$-boundedness of the localized maximal operator and the fact $2 Q^* \subseteq \mathcal{R}$. Notice that the constants in these estimates depend only on $d$ and $q$ so that
\begin{align}
\label{Eq: Beginning of II}
 \mathrm{II} \leq \frac{C}{\alpha^{p_0 / 2}} \int_{\mathcal{R} \cap \Omega_{\ell}} \bigg( \sum_{k = 1}^{k_0} \Big( \lvert \lambda_k w_{k , \ell} \rvert^2 + \lvert \lvert \lambda_k \rvert^{\frac{1}{2}} \nabla w_{k , \ell} \rvert^2 + \lvert \lvert \lambda_k \rvert^{\frac{1}{2}} \psi_{k , \ell} \rvert^2 \Big) \bigg)^{\frac{p_0}{2}} \, d x.
\end{align}
Next, we use want to use a Sobolev inequality on the convex set $\Xi = \mathcal{R} \cap \Omega_{\ell}$. As a preparation, let us estimate a derivative of the function to which we want to apply this Sobolev inequality. The chain rule followed by the inequality of Cauchy--Schwarz yield
\begin{align*}
 &\bigg\lvert \partial_j \bigg( \sum_{k = 1}^{k_0} \Big( \lvert \lambda_k w_{k , \ell} \rvert^2 + \lvert \lvert \lambda_k \rvert^{\frac{1}{2}} \nabla w_{k , \ell} \rvert^2 + \lvert \lvert \lambda_k \rvert^{\frac{1}{2}} \psi_{k , \ell} \rvert^2 \Big) \bigg)^{\frac{1}{2}} \bigg\rvert \\
&\qquad \leq \bigg( \sum_{k = 1}^{k_0} \Big( \lvert \lambda_k w_{k , \ell} \rvert^2 + \lvert \lvert \lambda_k \rvert^{\frac{1}{2}} \nabla w_{k , \ell} \rvert^2 + \lvert \lvert \lambda_k \rvert^{\frac{1}{2}} \psi_{k , \ell} \rvert^2 \Big) \bigg)^{- \frac{1}{2}} \\
&\qquad\qquad \cdot \bigg( \sum_{k = 1}^{k_0} \Big( \lvert \lambda_k \rvert^2 \lvert w_{k , \ell} \rvert \lvert \partial_j w_{k , \ell} \rvert + \lvert \lambda_k \rvert \lvert \nabla w_{k , \ell} \rvert \lvert \partial_j \nabla w_{k , \ell} \rvert + \lvert \lambda_k \rvert \lvert \psi_{k , \ell} \rvert \lvert \partial_j \psi_{k , \ell} \rvert \Big) \bigg) \\
&\qquad \leq \bigg( \sum_{k = 1}^{k_0} \Big( \lvert \lambda_k \rvert^2 \lvert \partial_j w_{k , \ell} \rvert^2 + \lvert \lambda_k \rvert \lvert \partial_j \nabla w_{k , \ell} \rvert^2 + \lvert \lambda_k \rvert \lvert \partial_j \psi_{k , \ell} \rvert^2 \Big) \bigg)^{\frac{1}{2}}.
\end{align*}
We stress that this inequality involves only the multiplicative constant $1$. Now, the constants in Sobolev's inequality can explicitly be computed. This was done, e.g., in~\cite[Prop.~6.2]{Tolksdorf_convex}. Combined with~~\eqref{Eq: uniform d-set} this yields
\begin{align}
\label{Eq: Reverse Holder intermediate}
\begin{aligned}
 &\bigg( \int_{\mathcal{R} \cap \Omega_{\ell}} \bigg( \sum_{k = 1}^{k_0} \Big( \lvert \lambda_k w_{k , \ell} \rvert^2 + \lvert \lvert \lambda_k \rvert^{\frac{1}{2}} \nabla w_{k , \ell} \rvert^2 + \lvert \lvert \lambda_k \rvert^{\frac{1}{2}} \psi_{k , \ell} \rvert^2 \Big) \bigg)^{\frac{p_0}{2}} \, d x \bigg)^{\frac{1}{p_0}} \\
 &\qquad \leq C \bigg( r^{\frac{d}{p_0} - \frac{1}{2}} \bigg( \int_{\mathcal{R} \cap \Omega_{\ell}} \sum_{k = 1}^{k_0} \Big( \lvert \lambda_k w_{k , \ell} \rvert^2 + \lvert \lvert \lambda_k \rvert^{\frac{1}{2}} \nabla w_{k , \ell} \rvert^2 + \lvert \lvert \lambda_k \rvert^{\frac{1}{2}} \psi_{k , \ell} \rvert^2 \Big) \, dx \bigg)^{\frac{1}{2}} \\
&\qquad\qquad + r^{1 - (\frac{d}{2} - \frac{d}{p_0})} \bigg( \int_{\mathcal{R} \cap \Omega_{\ell}} \sum_{k = 1}^{k_0} \Big( \lvert \lambda_k \rvert^2 \lvert \nabla w_{k , \ell} \rvert^2 + \lvert \lambda_k \rvert \lvert \nabla^2 w_{k , \ell} \rvert^2 + \lvert \lambda_k \rvert \lvert \nabla \psi_{k , \ell} \rvert^2 \Big) \, d x \bigg)^{\frac{1}{2}} \bigg).
\end{aligned}
\end{align}
Here, the constant $C > 0$ only depends on $d$, $p_0$, $r_0$ and $\diam (\Omega)$ and is, in particular, independent of $r$, $k_0$ and $\ell$. The first term on the right-hand side is already our desired term. We thus focus on the second term. It is important to note that the sum and the integral commute by linearity. This allows us to apply the localized second-order estimate in~\cite[Prop.~4.12]{Tolksdorf_convex} as well as the Caccioppoli inequality~\cite[Lem.~6.1]{Tolksdorf_convex} term by term leading to
\begin{align*}
 &r^{1 - (\frac{d}{2} - \frac{d}{p_0})} \bigg( \int_{\mathcal{R} \cap \Omega_{\ell}} \sum_{k = 1}^{k_0} \Big( \lvert \lambda_k \rvert^2 \lvert \nabla w_{k , \ell} \rvert^2 + \lvert \lambda_k \rvert \lvert \nabla^2 w_{k , \ell} \rvert^2 + \lvert \lambda_k \rvert \lvert \nabla \psi_{k , \ell} \rvert^2 \Big) \, d x \bigg)^{\frac{1}{2}} \\
&\qquad \leq C r^{1 - (\frac{d}{2} - \frac{d}{p_0})} \bigg( \sum_{k = 1}^{k_0} \int_{(2\mathcal{R}) \cap \Omega_{\ell}} \Big( \lvert \lambda_k \rvert^3 \lvert w_{k , \ell} \rvert^2 + \frac{1}{r^2} \Big( \lvert \lvert \lambda_k \rvert^{\frac{1}{2}} \nabla w_{k , \ell} \rvert^2 + \lvert \lvert \lambda_k \rvert^{\frac{1}{2}} \psi_{k , \ell} \rvert^2 \Big) \Big) \, d x \bigg)^{\frac{1}{2}} \\
&\qquad \leq C r^{- (\frac{d}{2} - \frac{d}{p_0})} \bigg( \sum_{k = 1}^{k_0} \int_{(4\mathcal{R}) \cap \Omega_{\ell}} \Big( \lvert \lambda_k w_{k , \ell} \rvert^2 + \lvert \lvert \lambda_k \rvert^{\frac{1}{2}} \nabla w_{k , \ell} \rvert^2 + \lvert \lvert \lambda_k \rvert^{\frac{1}{2}} \psi_{k , \ell} \rvert^2 \Big) \, d x \bigg)^{\frac{1}{2}}.
\end{align*}
We combine this estimate with~\eqref{Eq: Reverse Holder intermediate} and insert the result in~\eqref{Eq: Beginning of II} to deduce the estimate
\begin{align*}
 \mathrm{II} \leq \frac{C}{\alpha^{p_0/2}} r^{- d (\frac{p_0}{2} - 1)} \bigg( \sum_{k = 1}^{k_0} \int_{(4\mathcal{R}) \cap \Omega_{\ell}} \Big( \lvert \lambda_k w_{k , \ell} \rvert^2 + \lvert \lvert \lambda_k \rvert^{\frac{1}{2}} \nabla w_{k , \ell} \rvert^2 + \lvert \lvert \lambda_k \rvert^{\frac{1}{2}} \psi_{k , \ell} \rvert^2 \Big) \, d x \bigg)^{\frac{p_0}{2}}.
\end{align*}
Finally, we add and substract $v_{k , \ell}$ and $\vartheta_{k , \ell}$ and use the $L^2$-bounds in Proposition~\ref{Prop: L2-case Neumann} to arrive at
\begin{align*}
 \mathrm{II} &\leq \frac{C \lvert Q \rvert}{\alpha^{p_0/2}} \bigg( \frac{1}{\lvert 4 \mathcal{R} \rvert} \sum_{k = 1}^{k_0} \int_{(4\mathcal{R}) \cap \Omega_{\ell}} \Big( \lvert \lambda_k u_{k , \ell} \rvert^2 + \lvert \lvert \lambda_k \rvert^{\frac{1}{2}} \nabla u_{k , \ell} \rvert^2 + \lvert \lvert \lambda_k \rvert^{\frac{1}{2}} \phi_{k , \ell} \rvert^2 + \lvert f_k \rvert^2 \Big) \, d x \bigg)^{\frac{p_0}{2}} \\
 &= \frac{C \lvert Q \rvert}{\alpha^{p_0/2}} \bigg( \frac{1}{\lvert 4 \mathcal{R} \rvert} \int_{(4\mathcal{R}) \cap \Omega_{\ell}} \Big( \| T f \|_{Y}^2 + \| (f_k)_{k = 1}^{k_0} \|_X^2 \Big) \, d x \bigg)^{\frac{p_0}{2}}.
\end{align*}
Altogether, this yields~\eqref{Eq: Generalized weak reverse Hoelder inequality IR^d}.

\subsection*{Case~3: We have $0 < 2 r \leq \sqrt{d}\, \diam(\Omega)$ and $2 Q^* \cap \partial \Omega_{\ell} = \emptyset$}

This case is treated similarly to the previous case. The only difference is that there is no need to introduce the cube $\mathcal{R}$, thus, by setting $\mathcal{R} := 2 Q^*$ in Case~2, the proof is literally the same.

\subsection*{Step~3: Conclusion of the proof in the situation $p > 2$}
Notice that the family of all operators $T$ defined as in~\eqref{Eq: Approximate operators} forms a uniformly bounded set of operators in $\cL(\sL^2_{\sigma} (\Omega_{\ell} ; X) , L^2 (\Omega_{\ell} ; Y))$ by Proposition~\ref{Prop: L2-case Neumann}. Thus, by virtue of Steps~2 and~3 and density, we conclude by Theorem~\ref{Thm: Modified Shen whole space} that for all $2 < p < 2 d / (d - 2)$ the family of all these operators $T$ is uniformly bounded from $\sL^p_{\sigma} (\Omega_{\ell} ; X)$ into $L^p (\Omega_{\ell} ; Y)$. In particular, this holds true for each of the mappings
\begin{align*}
 T_1 : f \mapsto \begin{pmatrix} \lambda_1 u_{1 , \ell} \\ \vdots \\ \lambda_{k_0} u_{k_0 , \ell} \end{pmatrix},  \quad T_2 : f \mapsto \begin{pmatrix} \lvert \lambda_1 \rvert^{1 / 2} \nabla u_{1 , \ell} \\ \vdots \\ \lvert \lambda_{k_0} \rvert^{1 / 2} \nabla u_{k_0 , \ell} \end{pmatrix}, \quad \text{and} \quad T_3 : f \mapsto \begin{pmatrix} \lvert \lambda_1 \rvert^{1 / 2} \phi_1 \\ \vdots \\ \lvert \lambda_{k_0} \rvert^{1 / 2} \phi_{k_0} \end{pmatrix}\cdotp
\end{align*}
Now, by the approximation argument carried out in the proof of~\cite[Thm.~4.4]{Tolksdorf_convex}, the uniform boundedness of these mappings carries over to the domain $\Omega$ as $\ell \to \infty$. By Step~1, this concludes the $R$-boundedness of the families in~\eqref{Eq: R-bounded resolvents},~\eqref{Eq: R-bounded gradients} and \eqref{Eq: R-bounded pressure} as operators on $\Omega$ in the case $2 < p < 2d / (d - 2)$. 

\subsection*{Step 4: The case $2d / (d + 2) < p < 2$}
By duality and Step~3, we readily conclude the $R$-boundedness of the families in~\eqref{Eq: R-bounded resolvents} and~\eqref{Eq: R-bounded divergence}. It remains to study the $R$-boundedness of the families in~\eqref{Eq: R-bounded gradients},~\eqref{Eq: R-bounded full} and~\eqref{Eq: R-bounded pressure, weak}. Concerning~\eqref{Eq: R-bounded divergence} only the case $2 < p < 2d / (d - 2)$ is missing. This will be done by repeating the argument from Steps~1-3 but for another family of operators. \par
For this purpose, let $q := 2d / (d - 2)$ if $d \geq 3$ and let $q > 2$ if $d = 2$. Let again $(\Omega_{\ell})_{\ell \in \N}$ be the sequence of bounded, convex, and smooth domains introduced in~\cite[Rem.~4.3]{Tolksdorf_convex}. Let $F_1 , \dots , F_{k_0} \in C_c^{\infty} (\Omega_{\ell} ; \C^{d \times d})$ and let $u_{1 , \ell} , \dots , u_{k_0 , \ell}$ be given by $u_{k , \ell} := (\lambda_k + A_{\mu , \ell})^{-1} \mathcal{Q} \dive (F_k)$ and let $\phi_{k , \ell}$ denote the associated pressure. Consider the operator
\begin{align}
\label{Eq: Approximate operators 2}
 S f := S (F_k)_{k = 1}^{k_0} := \left( \begin{pmatrix} \lvert \lambda_1 \rvert^{1/2} u_{1 , \ell} \\ \vdots \\ \lvert \lambda_{k_0} \rvert^{1/2} u_{k_0 , \ell} \end{pmatrix} , \begin{pmatrix}  \nabla u_{1 , \ell} \\ \vdots \\ \nabla u_{k_0 , \ell} \end{pmatrix} , \begin{pmatrix}  \phi_{1 , \ell} \\ \vdots \\ \phi_{k_0 , \ell} \end{pmatrix} \right)\cdotp
\end{align}
Notice that $S$ extends to a bounded operator from $L^2 (\Omega_{\ell} ; \ell^2(\C^{d \times d}))$ to $L^2 (\Omega_{\ell} ; \ell^2(\C^{d}) \times \ell^2(\C^{d \times d}) \times \ell^2(\C))$ by the second part of Proposition~\ref{Prop: L2-case Neumann} and that its operator norm is bounded by a constant depending merely on $d$, $\mu$, and $\theta$. Now, the assumptions of Theorem~\ref{Thm: Modified Shen whole space} are verified analogously to Cases~1-3 above. Thus, each of the families
\begin{align*}
 \big(\lvert \lambda \rvert^{1 / 2} (\lambda + A_{\mu , \ell})^{-1} \mathcal{Q} \dive \big)_{\lambda \in \Sigma_{\theta}}, \quad \big( \nabla (\lambda + A_{\mu , \ell})^{-1} \mathcal{Q} \dive \big)_{\lambda \in \Sigma_{\theta}} \quad \text{and} \quad \big(\Phi_{\lambda} \mathcal{Q} \dive\big)_{\lambda \in \Sigma_{\theta}}
\end{align*}
gives rise to $R$-bounded families of operators on $L^r (\Omega_{\ell})$ for each $2 < r < q$. The approximation argument carried out in the proof of~\cite[Thm.~4.4]{Tolksdorf_convex}, implies the $R$-boundedness of these mappings on the domain $\Omega$. By duality, the remaining $R$-boundedness properties stated in Theorem~\ref{Thm: R-secoriality Neumann} follow. This finishes the proof of Theorem~\ref{Thm: R-secoriality Neumann}. \qed

\bigskip

In order to apply Corollary~\ref{cor:abstract} and Remark~\ref{rem:Cor-sa} we need suitable operators $B_p$ in $L^p(\Om)^d$. Here we take simply the negative Neumann Laplacian $B$ on $\Om$ which is associated with the closed and symmetric sesquilinear form 
$$
 \mathfrak{b}(u,v):=\int_\Om \nabla u\cdot\ov{\nabla v}\,dx,\quad u,v\in H^1(\Om)^d,
$$
in $L^2(\Om)^d$. Then $B$ is self-adjoint and non-negative in $L^2(\Om)^d$, and there is a consistent family $(B_p)_{p\in(1,\infty)}$ in $L^p(\Om)^d$, where each operator $B_p$ has a bounded $H^\infty$-calculus in $L^p(\Om)^d$, $p\in(1,\infty)$. 
Since $B_2=B$ is self-adjoint and non-negative in $L^2(\Om)^d$ we have $D(B_2^{1/2})=H^1(\Om)^d$ and  
\begin{equation}\label{eq:NLap-frac}
 D(B_2^{s/2})= \big[ L^2(\Om)^d,H^1(\Om)^d \big]_{s}=H^{s}(\Om)^d,\quad s\in(0,1).
\end{equation} 
Now we need two results due to Mitrea, Monniaux, and Wright. First we quote the following on the mapping properties of the projection $\mathbb{Q}$, which is part of \cite[Thm. 7.5]{MiMoWr}.

\begin{prop}\label{prop:MMW-HHdec}
For every Lipschitz domain $\Om\subseteq\R^d$ we have 
$$
 H^s(\Om)^d=(\cL^2_\si(\Om)\cap H^s(\Om)^d)\oplus\nabla H^{s+1}_0(\Om), \quad s\in(0,1/2).
$$
\end{prop}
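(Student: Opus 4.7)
The plan is to reduce the decomposition to the Lax--Milgram construction of a pressure $p$ as a Dirichlet-Laplacian solution, and then gain the extra Sobolev regularity of $p$ from the sharp regularity theory of the Dirichlet Laplacian on a Lipschitz domain.

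Given $u\in H^s(\Om)^d$ with $s\in(0,1/2)$, the classical $L^2$ Helmholtz decomposition $L^2(\Om)^d=\cL^2_\si(\Om)\oplus\nabla H^1_0(\Om)$ produces, via Lax--Milgram, a unique $p\in H^1_0(\Om)$ solving $-\Delta p=-\dv(u)$ in $H^{-1}(\Om)$, with $v:=u-\nabla p\in\cL^2_\si(\Om)$. The proposed decomposition is $u=v+\nabla p$, so everything reduces to showing $p\in H^{s+1}_0(\Om)$: this forces $\nabla p\in H^s(\Om)^d$ and hence $v\in\cL^2_\si(\Om)\cap H^s(\Om)^d$. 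Directness of the sum is inherited from the $L^2$ decomposition, since both candidate summands sit inside $\cL^2_\si(\Om)$ and $\nabla H^1_0(\Om)$ respectively, whose intersection is trivial.

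The crucial input is the sharp regularity theorem of Jerison--Kenig (refined by Savar\'e and Mitrea--Taylor) for the Dirichlet Laplacian on a Lipschitz domain: the solution operator is bounded from $L^2(\Om)$ into $H^{3/2}(\Om)\cap H^1_0(\Om)$, while Lax--Milgram provides the endpoint $H^{-1}(\Om)\to H^1_0(\Om)$. Complex interpolation at parameter $s\in(0,1/2)$ yields a bounded map $H^{s-1}(\Om)\to H^{s+1}(\Om)\cap H^1_0(\Om)$. Since $u\in H^s(\Om)^d$ gives $\dv(u)\in H^{s-1}(\Om)$, we deduce $p\in H^{s+1}(\Om)\cap H^1_0(\Om)=H^{s+1}_0(\Om)$, where the last identification uses $s+1<3/2$ so that the $H^{s+1/2}(\partial\Om)$-trace of $p$ agrees with its (already vanishing) $H^{1/2}(\partial\Om)$-trace.

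The main obstacle is exactly the sharp endpoint $L^2(\Om)\to H^{3/2}(\Om)$ for the Dirichlet Laplacian on a general Lipschitz domain: this is a genuine boundary-layer-potential estimate rather than standard elliptic regularity, and it is the reason the exponent ceiling in the proposition is $s<1/2$. On smoother domains one could push $s$ further, but in the Lipschitz setting $1/2$ is sharp, which is precisely why this is the natural range in the theorem of Mitrea, Monniaux, and Wright.
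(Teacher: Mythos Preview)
The paper does not supply its own proof of this proposition; it is quoted verbatim from \cite[Thm.~7.5]{MiMoWr}. Your overall strategy---writing $u=v+\nabla p$ via the $L^2$ Helmholtz decomposition and then upgrading the regularity of the pressure $p$ through the Dirichlet problem---is correct and is essentially the approach taken in \cite{MiMoWr}.

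There is, however, an arithmetic gap in your interpolation step. Interpolating $(-\Delta_D)^{-1}$ between the endpoints $H^{-1}(\Om)\to H^1_0(\Om)$ and $L^2(\Om)\to H^{3/2}(\Om)\cap H^1_0(\Om)$ at parameter $\theta\in(0,1)$ produces
\[
(-\Delta_D)^{-1}:\big[H^{-1},L^2\big]_\theta=H^{\theta-1}(\Om)\ \longrightarrow\ \big[H^1_0,\,H^{3/2}\cap H^1_0\big]_\theta\subseteq H^{1+\theta/2}(\Om),
\]
because the source scale has length $1$ while the target scale has length $1/2$. Taking $\theta=s$ gives only $p\in H^{1+s/2}$, hence $\nabla p\in H^{s/2}(\Om)^d$, which is too weak to conclude $v\in H^s(\Om)^d$. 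Choosing instead $\theta=2s$ fixes the target space but shrinks the source to $H^{2s-1}(\Om)\subsetneq H^{s-1}(\Om)$, and $\dv(u)$ need not lie there. The mapping property you actually need, namely $(-\Delta_D)^{-1}:H^{s-1}(\Om)\to H^{s+1}(\Om)\cap H^1_0(\Om)$ for $|s|<\tfrac12$, \emph{is} the content of the Jerison--Kenig theorem you cite, but its proof goes through layer-potential estimates and Rellich-type identities rather than a two-point interpolation against the Lax--Milgram bound. Once you invoke that mapping result directly, the remainder of your argument---including the identification $H^{s+1}(\Om)\cap H^1_0(\Om)=H^{s+1}_0(\Om)$ for $s+1<\tfrac32$ and the directness of the sum inherited from the $L^2$ decomposition---goes through without change.
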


The second result relates the domains of fractional powers of the operators $A_\mu$ in $\cL_\si^2(\Om)$ to solenoidal Sobolev spaces and is part of \cite[Thm. 9.1]{MiMoWr}.

\begin{prop}\label{prop:MMW-frac}
If $\Om\subseteq\R^d$ is a Lipschitz domain and $\mu\in(-1,1]$ then
$$
 D(A_\mu^{s/2})=H^s(\Om)^d\cap \cL^2_\si(\Om),\quad s\in[0,1].
$$
\end{prop}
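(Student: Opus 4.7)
The plan is to settle the endpoints $s=0$ and $s=1$ directly, pass to intermediate $s\in(0,1)$ by complex interpolation of fractional powers of $A_\mu$, and then identify the resulting interpolation space with $H^s(\Om)^d\cap\cL^2_\si(\Om)$ using Proposition~\ref{prop:MMW-HHdec}.

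At $s=0$ the identity is trivial. At $s=1$ I would exploit that $A_\mu$ is associated with the closed, symmetric sesquilinear form $\mathfrak{a}_\mu$ on $\cL^2_\si(\Om)$ with form domain $\cH^1_\si(\Om)=H^1(\Om)^d\cap\cL^2_\si(\Om)$; symmetry of the form follows from $a_{jk}^{\alpha\beta}(\mu)=a_{kj}^{\beta\alpha}(\mu)$. Splitting $\nabla u=D_s u+D_a u$ into symmetric and antisymmetric parts, a direct computation gives $\mathfrak{a}_\mu(u,u)=(1+\mu)\|D_s u\|^2+(1-\mu)\|D_a u\|^2$, which is coercive on $\cH^1_\si(\Om)$ (modulo a shift) for $\mu\in(-1,1)$; for $\mu=1$ coercivity comes from Korn's inequality. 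Hence $A_\mu$ is self-adjoint and non-negative in $\cL^2_\si(\Om)$, and the standard form identification yields $D(A_\mu^{1/2})=\cH^1_\si(\Om)$, as desired. Moreover, applying~\eqref{eq:intpol-dom} to $A_\mu$ produces
\[
D(A_\mu^{s/2})=\big[\cL^2_\si(\Om),\cH^1_\si(\Om)\big]_s,\qquad s\in(0,1).
\]

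The remaining task is to identify this interpolation space with $H^s(\Om)^d\cap\cL^2_\si(\Om)$. The inclusion ``$\subseteq$'' follows immediately from monotonicity of the complex functor applied to the pair embedding $(\cL^2_\si(\Om),\cH^1_\si(\Om))\hookrightarrow(L^2(\Om)^d,H^1(\Om)^d)$ together with~\eqref{eq:NLap-frac}. For the reverse inclusion in the range $s\in(0,1/2)$, Proposition~\ref{prop:MMW-HHdec} shows that the Leray projection $\mathbb{Q}$ restricts to a bounded projection $H^s(\Om)^d\to H^s(\Om)^d\cap\cL^2_\si(\Om)$ consistent with its action on $L^2(\Om)^d$; interpolating this pair of projections yields a bounded retract onto $[\cL^2_\si(\Om),\cH^1_\si(\Om)]_s$ and thereby ``$\supseteq$''. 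For $s\in[1/2,1]$, I would instead invoke the extension $\mathcal{Q}$ of $\mathbb{Q}$ to the anti-dual $(\cH^1_\si(\Om))^*$ recalled just before Proposition~\ref{Prop: L2-case Neumann}, combined with a duality argument to transport the inclusion through $s=1/2$.

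The principal obstacle will be the reverse inclusion in the upper range $s\in[1/2,1]$: on a general Lipschitz domain the pressure potential $p\in H^1_0(\Om)$ solving $\Delta p=\dive u$ does not gain the regularity $p\in H^{s+1}(\Om)$ when $u\in H^s(\Om)^d$, so $\mathbb{Q}$ fails to preserve $H^s$-regularity in that range and a direct interpolation of projections is unavailable. Circumventing this phenomenon is precisely the delicate technical content of~\cite[Thm.~9.1]{MiMoWr}.
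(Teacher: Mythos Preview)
The paper does not prove this proposition; it is quoted from \cite[Thm.~9.1]{MiMoWr}, so there is no in-paper argument to compare against. Your outline is sound at the endpoints and for the inclusion $D(A_\mu^{s/2})\subseteq H^s(\Om)^d\cap\cL^2_\si(\Om)$, and you are candid about the obstruction in the range $s\in[1/2,1]$.

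The genuine gap, however, already sits in your treatment of $s\in(0,1/2)$. The claim that ``interpolating this pair of projections yields a bounded retract onto $[\cL^2_\si(\Om),\cH^1_\si(\Om)]_s$'' does not go through: a retraction onto $[\cL^2_\si,\cH^1_\si]_s$ requires $\mathbb{Q}$ to be bounded on \emph{both} endpoints of the ambient couple $(L^2(\Om)^d,H^1(\Om)^d)$, in particular $\mathbb{Q}:H^1(\Om)^d\to\cH^1_\si(\Om)$. Since $\mathbb{Q}=I-\nabla\Delta_D^{-1}\dive$, this amounts to $H^2$-regularity for the Dirichlet Laplacian, which fails on a general Lipschitz domain --- the very phenomenon you invoke for $s\ge 1/2$. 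Knowing only that $\mathbb{Q}:H^s(\Om)^d\to H^s(\Om)^d\cap\cL^2_\si(\Om)$ is bounded for a fixed $s<1/2$ lets you interpolate the couple $(L^2(\Om)^d,H^s(\Om)^d)$ into $(\cL^2_\si(\Om),H^s(\Om)^d\cap\cL^2_\si(\Om))$, but the resulting target space is $[\cL^2_\si(\Om),H^s(\Om)^d\cap\cL^2_\si(\Om)]_\theta$, not $[\cL^2_\si(\Om),\cH^1_\si(\Om)]_{s}$; identifying those two is exactly the statement you are trying to prove. Hence the reverse inclusion is not established anywhere in $(0,1)$ by your sketch, and the substantive content of \cite{MiMoWr} is not bypassed.
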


The operators $B$ and $A_\mu$ have a non-trivial kernel and thus a non-dense range in $L^2(\Om)^d$ and $\cL^2_\si(\Om)$, respectively. For simplicity, we thus state the following result on the $H^\infty$-calculus for translates $\eps+A_\mu$ $\eps>0$, of $A_\mu$.

\begin{theorem}\label{thm:Hinfty-NeumannStokes}
Let $d\ge2$, $\Omega \subseteq \R^d$ be a bounded and convex domain. Let $\mu \in (-1 , \sqrt{2} - 1)$ and
\begin{align*}
 \Big\lvert \frac{1}{p} - \frac{1}{2} \Big\rvert < \frac{1}{d}\cdotp
\end{align*}
For each $\eps>0$ the operator $\eps+A_\mu$ has a bounded $H^\infty$-calculus in $\sL^p_\si(\Om)$. 
\end{theorem}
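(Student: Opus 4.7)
The plan is to apply Corollary~\ref{cor:abstract} in the form of Remark~\ref{rem:Cor-sa}, comparing $A_p:=\eps+A_\mu$ on $X_p=\sL^p_\sigma(\Om)$ with the shifted Neumann Laplacian $B_p:=\eps+B$ on $L^p(\Om)^d$, using the Leray projection $\mathbb{Q}$ as the family $P_p$ of self-adjoint projections. The case $p=2$ is automatic: $\eps+A_\mu$ is self-adjoint, strictly positive in $\sL^2_\sigma(\Om)$, hence has a bounded $H^\infty$-calculus of angle $0$ by the spectral theorem. So assume $p\neq 2$ satisfies $|1/p-1/2|<1/d$. I would pick an auxiliary $p_0$ strictly between $p$ and the corresponding endpoint of the admissible interval (namely $p_0\in(p,2d/(d-2))$ if $p>2$, and $p_0\in(2d/(d+2),p)$ if $p<2$; for $d=2$ any $p_0$ on the correct side of $2$ works). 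Then $I:=[\min\{2,p_0\},\max\{2,p_0\}]$ is contained in the admissible range and $p\in I\sm\{p_0\}$.

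Next I would collect the standing hypotheses of Corollary~\ref{cor:abstract}. Consistency of $(B_p)_{p\in I}$ and boundedness of the $H^\infty$-calculus of each $B_p$ on $L^p(\Om)^d$ are recalled in the paragraph containing \eqref{eq:NLap-frac}. Consistency of $(A_p)_{p\in I}$ is immediate from the variational definition of $A_\mu$. The $R$-sectoriality of $\eps+A_\mu$ in $\sL^p_\sigma(\Om)$ for $p\in I$ follows from Theorem~\ref{Thm: R-secoriality Neumann} applied to $A_\mu$, together with the elementary fact that a strictly positive shift preserves $R$-sectoriality and places $0$ in the resolvent set. Self-adjointness of $P_2=\mathbb{Q}$ on $L^2(\Om)^d$ is standard, and $0\in\rho(A_2)\cap\rho(B_2)$ holds because the spectra of $\eps+B$ and $\eps+A_\mu$ lie in $[\eps,\infty)$. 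Since $B_2$ and $A_2$ are associated with closed symmetric (hence sectorial) forms on $L^2(\Om)^d$ and $\sL^2_\sigma(\Om)$, respectively, Remark~\ref{rem:Cor-sa} reduces the task to verifying condition \eqref{eq:A2-B2-P2} for a single exponent $\al\in(0,1/2)$.

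The heart of the argument is this verification. Fix any $s\in(0,1)$ with $s/2<1/2$, and take $\al=s/2$. Because $B$ and $A_\mu$ are self-adjoint and nonnegative, the spectral theorem gives $D((\eps+B)^{s/2})=D(B^{s/2})$ and $D((\eps+A_\mu)^{s/2})=D(A_\mu^{s/2})$. By \eqref{eq:NLap-frac},
$$
 D(B_2^{s/2}) \;=\; H^s(\Om)^d,
$$
and by Proposition~\ref{prop:MMW-frac} (applicable since $\mu\in(-1,\sqrt2-1)\subseteq(-1,1]$),
$$
 D(A_2^{s/2}) \;=\; H^s(\Om)^d\cap\sL^2_\sigma(\Om).
$$
Proposition~\ref{prop:MMW-HHdec} with the chosen $s\in(0,1/2)$ yields the topological direct-sum decomposition $H^s(\Om)^d=(H^s(\Om)^d\cap\sL^2_\sigma(\Om))\oplus\nabla H^{s+1}_0(\Om)$, which refines the $L^2$-Helmholtz decomposition. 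Uniqueness of these decompositions forces the restriction of $\mathbb{Q}$ to $H^s(\Om)^d$ to be the projection onto $H^s(\Om)^d\cap\sL^2_\sigma(\Om)$ along $\nabla H^{s+1}_0(\Om)$. Consequently,
$$
 \mathbb{Q}\big(D(B_2^{s/2})\big)\;=\;H^s(\Om)^d\cap\sL^2_\sigma(\Om)\;=\;D(A_2^{s/2})\;=\;D(B_2^{s/2})\cap\sL^2_\sigma(\Om),
$$
which is precisely \eqref{eq:A2-B2-P2} with $\al=s/2\in(0,1/2)$. Corollary~\ref{cor:abstract} then furnishes a bounded $H^\infty$-calculus for $\eps+A_\mu$ on $\sL^p_\sigma(\Om)$.

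The main obstacle I anticipate is bookkeeping rather than mathematical depth: translating Theorem~\ref{Thm: R-secoriality Neumann} (formulated for $A_\mu$, which is not sectorial in the strict sense of Section~2 because of its kernel) into genuine $R$-sectoriality of $\eps+A_\mu$ on $\sL^p_\sigma(\Om)$ with $0\in\rho$, and similarly for $\eps+B$ on $L^p(\Om)^d$. Once this is done and the shift-invariance of fractional domains is invoked, the verification above produces \eqref{eq:A2-B2-P2} and the theorem follows from Corollary~\ref{cor:abstract} together with Remark~\ref{rem:Cor-sa}.
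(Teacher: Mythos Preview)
Your proof is correct and follows essentially the same route as the paper's own proof: you combine the fractional domain identifications \eqref{eq:NLap-frac} and Proposition~\ref{prop:MMW-frac} (valid for the $\eps$-shifted operators) with the $H^s$-decomposition of Proposition~\ref{prop:MMW-HHdec}, the $R$-sectoriality from Theorem~\ref{Thm: R-secoriality Neumann}, and then invoke Corollary~\ref{cor:abstract} via Remark~\ref{rem:Cor-sa}. The only minor slip is the phrase ``Fix any $s\in(0,1)$ with $s/2<1/2$'': since Proposition~\ref{prop:MMW-HHdec} requires $s\in(0,1/2)$, you should simply fix $s\in(0,1/2)$ from the outset (your subsequent use of the proposition already does this implicitly).
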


\begin{proof}
Observe that the descriptions of the fractional domain spaces in \eqref{eq:NLap-frac} and Proposition~\ref{prop:MMW-frac} also hold for $\eps+B$ and $\eps+A_\mu$, respectively. Combine these with Proposition~\ref{prop:MMW-HHdec}, Theorem~\ref{Thm: R-secoriality Neumann}, Corollary~\ref{cor:abstract}, and Remark~\ref{rem:Cor-sa}. 
\end{proof}

\section{Stokes operators with no-slip conditions in bounded Lipschitz domains}\label{sec:bdd-Lip}

Let $\Om\subseteq\R^d$ be a bounded Lipschitz domain. For $p\in(1,\infty)$, we denote by $L^p_\si(\Om)$ the closure in $L^p(\Om)^d$ of the space
$$
 C^\infty_{c,\si}(\Om):=\{\ph\in C^\infty_c(\Om)^d: \dive \ph=0\ \mbox{in $\Om$}\}.
$$
Then $L^p_\si(\Om)=\{f\in L^p(\Om)^d: \dive f=0, \nu\cdot f|_{\partial\Om}=0\,\}$, where $\nu\cdot f|_{\partial\Om}$ is defined in the sense of traces for $f\in L^p(\Om)^d$ with $\dive f\in L^p(\Om)$.

\subsection{The operator and spaces}
We define $H^1_{0,\si}(\Om)$ as the closure of $C_{c,\si}^{\infty}(\Om)$ in $H^1(\Om)^d$. Then we have 
\begin{align*}
 H^1_{0,\si}(\Om)&=\{u\in L^2_\si(\Om): \nabla u\in L^2(\Om)^{d\times d}, u|_{\partial\Om}=0\}  \\
 &=\{u\in L^2(\Om)^d:\dive u=0, \nabla u\in L^2(\Om)^{d\times d}, u|_{\partial\Om}=0\}.
\end{align*}
The Stokes operator in $\Om$ with no-slip boundary conditions is the operator $A$ that is associated with the symmetric and closed  sesquilinear form
$$
 \mathfrak{a}(u,v):=\int_\Om \nabla u\cdot\ov{\nabla v}\,dx,\quad u,v\in H^1_{0,\si}(\Om),
$$
in $L^2_\si(\Om)$. This means for $u,f\in L^2_\si(\Om)$ that
$$
 u\in D(A)\ \mbox{and}\ Au=f\ \Longleftrightarrow\ 
 u\in H^1_{0,\si}(\Om)\ \mbox{and}\ \forall v\in H^1_{0,\si}(\Om): \mathfrak{a}(u,v)=\int_\Om f\cdot \ov{v}\,dx.
$$
In other words, in the Gelfand triple $H^1_{0,\si}(\Om)\emb L^2_\si(\Om)\emb (H^1_{0,\si}(\Om))^*$, the operator $A$ is the part in $L^2_\si(\Om)$ of the operator
$$
 \mathcal{A}:H^1_{0,\si}(\Om)\to (H^1_{0,\si}(\Om))^*, \quad u\mapsto \mathfrak{a}(u,\cdot).
$$
Since $\mathfrak{a}$ is symmetric, the operator $A$ is self-adjoint in $L^2_\si(\Om)$.
We shall compare the operator $A$ to the negative Dirichlet Laplacian $B$ on $\Om$, which in $L^2(\Om)^d$ is associated with the symmetric and closed sesquilinear form 
$$
 \mathfrak{b}(u,v):=\int_\Om \nabla u\cdot\ov{\nabla v}\,dx,\quad u,v\in H^1_{0}(\Om)^d,
$$
which means for $u,f\in L^2(\Om)^d$ that
$$
 u\in D(B)\ \mbox{and}\ Bu=f\ \Longleftrightarrow\ 
 u\in H^1_{0}(\Om)^d\ \mbox{and}\ \forall v\in H^1_{0}(\Om)^d: \mathfrak{b}(u,v)=\int_\Om f\cdot \ov{v}\,dx.
$$
So, in the Gelfand triple $H^1_0(\Om)^d\emb L^2(\Om)^d\emb (H^1_0(\Om)^d)^*$, the operator $B$ is the part in $L^2(\Om)^d$ of the operator
$$
 \mathcal{B}:H^1_0(\Om)^d \to (H^1_0(\Om)^d)^*, \quad u\mapsto \mathfrak{b}(u,\cdot),
$$
which acts on $u\in H^1_0(\Om)^d$ as $\mathcal{B}u=-\Delta u$ in distributional sense. Hence we have
$$
 D(B)=\{u\in H^1_0(\Om)^d: \Delta u\in L^2(\Om)^d\},\quad Bu=-\Delta u.
$$
Since $\mathfrak{b}$ is symmetric, the operator $B$ is self-adjoint in $L^2(\Om)^d$. 

Denoting by $\PP$ the orthogonal projection in $L^2(\Om)^d$ onto $L^2_\si(\Om)$ (the \emph{Helmholtz projection}) we have 
$$
 L^2(\Om)^d=L^2_\si(\Om) \oplus G^2(\Om),
$$
where $G^2(\Om)=\{\nabla p\in L^2(\Om)^d: p\in L^2(\Om)\}=\nabla H^1(\Om)$ is the space of gradients in $L^2(\Om)^d$. Then 
$\PP$ has a continuous extension $\mathcal{P}: (H^1_0(\Om)^d)^* \to (H^1_{0,\si}(\Om))^*$, which is given by $\mathcal{P}\phi:=\phi|_{H^1_{0,\si}(\Om)}$. The adjoint operator of $\mathcal{P}$ is the inclusion map $J:H^1_{0,\si}(\Om)
\to H^1_0(\Om)^d$. A rather obvious relation of the Stokes operator to the negative Dirichlet Laplacian is then given by
\begin{align}\label{eq:Stokes-DLap}
 \mathcal{A}=\mathcal{P}\mathcal{B}J,
\end{align}
see \cite[Prop. 2.3]{Monniaux}.
We remark that, in a canonical way, 
$$
 (H^1_{0,\si}(\Om))^*\equiv (H^1_0(\Om)^d)^*/ \{\phi : \mathcal{P}\phi|_{H^1_{0,\si}(\Om)}=0\} , \qquad
  \{\phi\in (H^1_0(\Om)^d)^* : \mathcal{P}\phi|_{H^1_{0,\si}(\Om)}=0\} = \nabla L^2(\Om),
$$
and thus elements of $(H^1_{0,\si}(\Om))^*$ may be regarded as equivalence classes $\phi+\nabla L^2(\Om)$, $\phi\in (H^1_0(\Om)^d)^*$. Taking the part in $L^2_\si(\Om)$ of the right hand side of \eqref{eq:Stokes-DLap} thus leads to the following description of the Stokes operator $A$ in $L^2_\si(\Om)$, see \cite[Thm. 4.7]{MM:JFA}:
$$
D(A)=\{u\in H^1_{0,\si}(\Om): \exists g\in L^2(\Om): -\Delta u+\nabla g\in L^2_\si(\Om) \}, \quad Au=-\Delta u+\nabla g.
$$
For $p\in(1,\infty)$ with $|\frac1p-\frac12|\le\frac16+\eps$, where $\eps$ depends on $\Om$, it had been shown in \cite{Fabes-Mendez-Mitrea} that the Helmholtz projection $\PP$ in $L^2(\Om)^d$ extends consistently to a bounded projection $\PP_p$ in $L^p(\Om)^d$ and that a  corresponding Helmholtz decomposition 
\begin{align}\label{eq:HH-dec}
 L^p(\Om)^d=L^p_\si(\Om)\oplus \nabla W^{1,p}(\Om)
\end{align}
holds. This range is known to be optimal for $d\ge3$. For bounded Lipschitz domains $\Om\subseteq\R^2$, boundedness of $\PP_p$ and the Helmholtz decomposition of $L^p(\Om)^d$ hold for the larger range of $p$ given by $|\frac12-\frac1p|\le\frac14+\eps$ (\cite{DMitrea}).
 
For $d\ge3$ it had been shown by Z. Shen in \cite{Shen2012} that, for $p\in(1,\infty)$ with $|\frac1p-\frac12|\le\frac1{2d}+\eps$, where $\eps$ depends on $\Om$, resolvents of the Stokes operator $A$ in $L^2_\si(\Om)$ extend by consistency to resolvents of a sectorial operator in $L^p_\si(\Om)$ (see \cite[Thm. 1.1]{Shen2012}) and that this operator, the Stokes operator $A_p$ in $L^p_\si(\Om)$, is given by
$$
 D(A_p)=\{u\in W^{1,p}_0(\Om)^d: \dive u=0,  \exists g\in L^p(\Om): -\Delta u+\nabla g\in L^p_\si(\Om) \}, 
 \quad A_pu=-\Delta u+\nabla g,
$$
see \cite[Cor. 1.2 and Rem. 6.4]{Shen2012}. We further mention, that resolvent bounds for the the Stokes operator in planar Lipschitz domains were recently established in $L^{\infty}$ by Geng and Shen (\cite{Geng_Shen}). 

\subsection{$R$-sectoriality and bounded $H^\infty$-calculus}
Building upon the arguments in \cite{Shen2012}, the following has been independently shown in \cite{KuW:Hinfty-Stokes} and \cite{Tolksdorf-Diss}.

\begin{prop}\label{prop:Lip-dge3}
Let $d\ge3$ and $\Om\subseteq\R^d$ be a bounded Lipschitz domain. For  $\theta\in(\frac\pi2,\pi)$ there exists $\eps>0$ such that, for $p\in(1,\infty)$ satisfying $|\frac12-\frac1p|\le \frac1{2d}+\eps$, the set 
$$
 \{(|\la|+1)(\la+A_p)^{-1}:\la\in\Si_\theta\}\subseteq \sL(L^p_\si(\Om))
$$
is $R$-bounded.
\end{prop}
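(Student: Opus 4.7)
The strategy is to mirror the Shen-type extrapolation scheme used in the proof of Theorem~\ref{Thm: R-secoriality Neumann}, substituting Shen's Lipschitz boundary estimates from \cite{Shen2012} for the convex-domain ingredients employed there. The first step is to rewrite $R$-boundedness as a uniform square function estimate: since $L^p_\si(\Om)$ is a closed subspace of $L^p(\Om)^d$, $R$-boundedness of $\{(|\la|+1)(\la+A_p)^{-1}:\la\in\Si_\theta\}$ is equivalent to the assertion that, for every $k_0\in\N$, every $\la_1,\ldots,\la_{k_0}\in\Si_\theta$, and every $f_1,\ldots,f_{k_0}\in L^p_\si(\Om)$,
$$
 \Bigl\| \bigl( \textstyle\sum_{k=1}^{k_0} \lvert (\lvert\la_k\rvert+1)(\la_k+A_p)^{-1}f_k\rvert^2 \bigr)^{1/2} \Bigr\|_{L^p(\Om)} \lesssim \Bigl\| \bigl( \textstyle\sum_{k=1}^{k_0}\lvert f_k \rvert^2 \bigr)^{1/2} \Bigr\|_{L^p(\Om)}.
$$
Equivalently, the map $T(f_k)_{k=1}^{k_0}:=((\lvert\la_k\rvert+1)(\la_k+A_p)^{-1}f_k)_{k=1}^{k_0}$ should be bounded on $L^p_\si(\Om;\ell^2)$ uniformly in $k_0$ and in the $\la_k$.

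At $p=2$ the estimate is immediate: since $A$ is self-adjoint and non-negative on $L^2_\si(\Om)$ with $0\in\rho(A)$, the spectral theorem yields $\nn{(\lvert\la\rvert+1)(\la+A)^{-1}}{\sL(L^2_\si(\Om))}\le C_\theta$ uniformly in $\la\in\Si_\theta$, and in a Hilbert space uniform boundedness coincides with $R$-boundedness. To push this to $p$ in a neighbourhood of $2$, I would invoke the vector-valued Shen extrapolation theorem (Theorem~\ref{Thm: Modified Shen whole space}) with $\Xi=\Om$, $X=Y=\ell^2$, and $p_0=2d/(d-2)$. Checking~\eqref{Eq: Generalized weak reverse Hoelder inequality IR^d} follows the same three-case pattern (large cube, boundary cube, interior cube) as in the convex Neumann case. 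On a boundary cube $Q$ with associated cube $\mathcal{R}$ centred on $\partial\Om$, each $u_k:=(\la_k+A)^{-1}f_k$ is split as $u_k=v_k+w_k$, where $v_k$ solves an auxiliary no-slip Stokes problem on $(8\mathcal{R})\cap\Om$ driven by the restriction of $f_k$; $v_k$ is controlled by the weak-type $(1,1)$ maximal estimate combined with the $L^2$ bound, while the homogeneous remainder $w_k$ is handled by Shen's boundary reverse H\"older estimate underlying \cite[Thm.~1.1]{Shen2012}. By the Cauchy--Schwarz chain rule trick from Case~2 of the proof of Theorem~\ref{Thm: R-secoriality Neumann}, these scalar inequalities extend termwise to the $\ell^2$-valued sum $\sum_k\lvert w_k\rvert^2$ without loss of constants. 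Theorem~\ref{Thm: Modified Shen whole space} then yields the desired $L^p$-bound for $2<p<2+\delta(\Om)$, and the range $p<2$ is recovered by duality using the self-adjointness of $A_2$.

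The main obstacle is the vector-valued upgrade of Shen's boundary reverse H\"older inequality: the interior Caccioppoli estimate, pressure localisation, and boundary regularity for the homogeneous Stokes system on $(8\mathcal{R})\cap\Om$ must be applied with constants independent of $k_0$ and of the parameters $(\la_k)_{k=1}^{k_0}$. This uniformity is inherited from the scalar argument, since the underlying constants depend only on the Lipschitz character of $\Om$ and on the geometry of $\mathcal{R}$, and summing the scalar inequalities over $k$ before integrating causes no loss. The resulting exponent window $\lvert\tfrac12-\tfrac1p\rvert\le\tfrac1{2d}+\eps$ then mirrors the scalar sectoriality range established in \cite[Thm.~1.1]{Shen2012}.
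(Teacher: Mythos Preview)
The paper does not supply its own proof of this proposition; it records the result as having been established independently in \cite{KuW:Hinfty-Stokes} and \cite{Tolksdorf-Diss}, both of which adapt Shen's extrapolation scheme from \cite{Shen2012} to the $R$-bounded setting. Your outline is faithful to that approach: rewrite $R$-boundedness as an $\ell^2$-valued square function estimate, feed it into the vector-valued Shen extrapolation (Theorem~\ref{Thm: Modified Shen whole space}), and verify the good-$\lambda$ inequality cube by cube. So in spirit you are reproducing the cited argument.

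Two technical corrections are worth flagging. First, the extrapolation exponent $p_0$ is \emph{not} $2d/(d-2)$ here; that value arises in the convex Neumann case because one has $H^2$-regularity. In the Lipschitz Dirichlet setting no such second-order estimate is available, and Shen's boundary reverse H\"older inequality for the homogeneous Stokes system yields only $p_0=2d/(d-1)+\delta(\Om)$, which is exactly why the final range is $|\tfrac12-\tfrac1p|\le\tfrac1{2d}+\eps$ rather than the wider window $\tfrac1d$ seen in Theorem~\ref{Thm: R-secoriality Neumann}. Second, and related, the reverse H\"older step in \cite{Shen2012} is not obtained via a Sobolev embedding applied to $\nabla^2 w$ and $\nabla\psi$; it rests on Rellich-type identities and nontangential maximal function estimates for the Stokes system near a Lipschitz boundary. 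Consequently the ``chain rule plus Cauchy--Schwarz'' trick you invoke from Case~2 is not the right mechanism here---but the $\ell^2$-upgrade still goes through, since Shen's scalar reverse H\"older estimate applies to each $w_k$ with constants depending only on the Lipschitz character, and one simply sums over $k$ before taking the $L^{p_0}$-norm.
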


For $d=3$, the range of $p$ in Proposition~\ref{prop:Lip-dge3} matches the range of $p$ for the validity of the Helmholtz decomposition \eqref{eq:HH-dec}. For $d>3$ the range of $p$ is smaller, and it is unknown if this is just due to the method of proof.

For an application of Corollary~\ref{cor:abstract} and Remark~\ref{rem:Cor-sa} we now only need to relate domains of some fractional powers of $A$ and $B$. The following is covered by \cite[Prop. 2.16]{MM:JFA}, which is much more general.

\begin{prop}\label{prop:fract-HH}
Let $\Om\subseteq\R^d$ be a bounded Lipschitz domain. For $|s|<1/2$ the Helmholtz projection $\PP$ is bounded in $H^s(\Om)^d$ and induces the topological direct sum decomposition
$$
 H^s(\Om)^d=H^s_\si(\Om)\oplus \nabla H^{s+1}(\Om)
$$ 
where $H^s_\si(\Om)=H^s(\Om)^d\cap L^2_\si(\Om)$.
\end{prop}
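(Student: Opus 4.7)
The strategy is to first establish boundedness of $\PP$ on $H^s(\Om)^d$ for the full range $|s|<1/2$, and then derive the decomposition from $u = \PP u + (\mathrm{Id}-\PP)u$ by identifying the ranges. The case $s=0$ is the classical orthogonal Helmholtz decomposition on $L^2(\Om)^d$ and needs no further argument.

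For $s\in(0,1/2)$ and $u\in H^s(\Om)^d\subseteq L^2(\Om)^d$, I would write $\PP u := u-\nabla p$, where $p\in H^1(\Om)$ is the weak Neumann solution
$$
 \int_\Om \nabla p\cdot\ov{\nabla\varphi}\,dx = \int_\Om u\cdot\ov{\nabla\varphi}\,dx,\qquad \varphi\in H^1(\Om).
$$
The key analytic input is the sharp $L^2$-Sobolev regularity for the Neumann Laplacian on bounded Lipschitz domains due to Jerison--Kenig (in the form used in~\cite{MM:JFA}): for $|s|<1/2$ the above problem admits a solution $p\in H^{s+1}(\Om)$, unique up to additive constants, satisfying $\nn{\nabla p}{H^s(\Om)^d}\lesssim \nn{u}{H^s(\Om)^d}$. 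Hence $\PP u\in H^s(\Om)^d$ with the corresponding norm estimate.

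For $s\in(-1/2,0)$ I would argue by duality. Since $\PP$ is self-adjoint in $L^2(\Om)^d$, and since $(H^{-s}(\Om))^*=H^s(\Om)$ with respect to the $L^2$-pairing for $|s|<1/2$ (the threshold $1/2$ is exactly what ensures that no boundary conditions enter on the dual side), the continuous extension of $\PP$ to $H^s(\Om)^d$ is realised as the adjoint of its bounded action on $H^{-s}(\Om)^d$ obtained in the previous step.

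Once $\PP\in\sL(H^s(\Om)^d)$ is available, the topological direct sum follows from $\PP^2=\PP$: the range of $\PP$ on $H^s(\Om)^d$ is $L^2_\si(\Om)\cap H^s(\Om)^d=H^s_\si(\Om)$, while the range of $\mathrm{Id}-\PP$ equals $\nabla H^{s+1}(\Om)$ again by the Neumann regularity; directness of the sum is inherited from the orthogonality at the $L^2$-level. The main obstacle is the $H^{s+1}$-regularity for the Neumann Laplacian on Lipschitz domains throughout the range $|s|<1/2$: this is precisely the threshold at which the Jerison--Kenig theory operates, and it dictates the sharp range of $s$ in the statement.
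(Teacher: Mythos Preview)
Your proposal is correct and matches the approach the paper defers to: the paper does not give its own proof but cites \cite[Prop.~2.16]{MM:JFA} and \cite[Prop.~14]{KuW:Hinfty-Stokes}, the latter following \cite[Thm.~11.1]{Fabes-Mendez-Mitrea}, and your argument (Neumann regularity for $s\in(0,1/2)$, then duality for $s\in(-1/2,0)$, then identification of ranges) is exactly the Fabes--Mendez--Mitrea route. One minor attribution point: the sharp $H^{s+1}$-regularity for the \emph{Neumann} problem on Lipschitz domains in this range is due to Fabes--Mendez--Mitrea rather than Jerison--Kenig (whose work concerns primarily the Dirichlet problem), so you should adjust the citation accordingly.
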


We also refer to \cite[Prop. 14]{KuW:Hinfty-Stokes} where an argument is sketched that follows the proof of \cite[Thm. 11.1]{Fabes-Mendez-Mitrea}. 

The following, which is part of \cite[Prop. 15]{KuW:Hinfty-Stokes}, identifies fractional domain spaces of $A$ and $B$ as the fractional Sobolev spaces in Proposition~\ref{prop:fract-HH}.  

\begin{prop}\label{prop:Lip-fract-AB}
Let $\Om\subseteq\R^d$ be a bounded Lipschitz domain. For $0<s<1/2$ we have
$$
 D(B_2^{s/2})=H^s(\Om)^d\quad\mbox{and}\quad D(A_2^{s/2})=H^s(\Om)^d\cap L^2_\si(\Om).
$$
\end{prop}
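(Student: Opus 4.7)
The plan is to identify both $D(B_2^{s/2})$ and $D(A_2^{s/2})$ with complex interpolation spaces and then recognise these via Proposition~\ref{prop:fract-HH}. Since $B$ and $A$ are self-adjoint and non-negative in $L^2(\Om)^d$ and $L^2_\si(\Om)$, respectively, the theory of symmetric closed forms gives $D(B_2^{1/2})=H^1_0(\Om)^d$ and $D(A_2^{1/2})=H^1_{0,\si}(\Om)$ with equivalent norms. Both operators have a bounded $H^\infty$-calculus by the spectral theorem, so, as recalled in~\eqref{eq:intpol-dom}, for $0<s<1$
$$
 D(B_2^{s/2})=\big[L^2(\Om)^d,H^1_0(\Om)^d\big]_s \quad\mbox{and}\quad D(A_2^{s/2})=\big[L^2_\si(\Om),H^1_{0,\si}(\Om)\big]_s.
$$

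For $B_2$, I would invoke the classical Jerison--Kenig type identification $[L^2(\Om),H^1_0(\Om)]_s=H^s(\Om)$, which holds on bounded Lipschitz domains in the range $0<s<1/2$ (precisely because the $H^s$-trace is not controlled for such $s$). Applied componentwise, this gives the first assertion $D(B_2^{s/2})=H^s(\Om)^d$.

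For $A_2$, the key tool is Proposition~\ref{prop:fract-HH}, which in particular asserts that $\PP$ is bounded on $H^s(\Om)^d$ with range $H^s(\Om)^d\cap L^2_\si(\Om)$. Combined with the obvious boundedness of $\PP:L^2(\Om)^d\to L^2_\si(\Om)$ and $\PP:H^1_0(\Om)^d\to H^1_{0,\si}(\Om)$, complex interpolation yields a bounded map $\PP:H^s(\Om)^d\to D(A_2^{s/2})$. Since every $u\in H^s(\Om)^d\cap L^2_\si(\Om)$ satisfies $u=\PP u$, this gives $H^s(\Om)^d\cap L^2_\si(\Om)\subseteq D(A_2^{s/2})$. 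For the converse inclusion, interpolating the inclusion maps $L^2_\si(\Om)\emb L^2(\Om)^d$ and $H^1_{0,\si}(\Om)\emb H^1_0(\Om)^d$ gives $D(A_2^{s/2})\emb H^s(\Om)^d$; together with the trivial $D(A_2^{s/2})\subseteq L^2_\si(\Om)$ this yields the reverse inclusion and completes the proof.

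The only substantive input beyond self-adjointness is the identification $[L^2(\Om),H^1_0(\Om)]_s=H^s(\Om)$ on a Lipschitz domain for $0<s<1/2$; this is the step where the restriction $s<1/2$ is genuinely used. Once this fact and Proposition~\ref{prop:fract-HH} are in hand, the argument is formal complex interpolation and I expect no further obstacles.
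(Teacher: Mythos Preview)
Your argument for $D(B_2^{s/2})=H^s(\Om)^d$ and for the inclusion $D(A_2^{s/2})\subseteq H^s(\Om)^d\cap L^2_\si(\Om)$ (via interpolation of the inclusion $H^1_{0,\si}(\Om)\hookrightarrow H^1_0(\Om)^d$) is fine. The genuine gap is the claimed ``obvious'' boundedness $\PP:H^1_0(\Om)^d\to H^1_{0,\si}(\Om)$: this is false. For $u\in H^1_0(\Om)^d$ one has $\PP u=u-\nabla p$, where $p$ solves the Neumann problem $\Delta p=\dive u$ in $\Om$, $\partial_\nu p=u\cdot\nu|_{\partial\Om}=0$. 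The boundary trace of $\nabla p$ therefore has vanishing normal component but, in general, a non-trivial tangential component; hence $\PP u\notin H^1_0(\Om)^d$ and a fortiori $\PP u\notin H^1_{0,\si}(\Om)$. (This fails already on smooth domains, so it is not a Lipschitz issue.) Consequently the interpolation that was supposed to yield $\PP:H^s(\Om)^d\to D(A_2^{s/2})$ has no valid upper endpoint, and the inclusion $H^s(\Om)^d\cap L^2_\si(\Om)\subseteq D(A_2^{s/2})$ is left unproved.

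The paper itself does not give a proof but defers to \cite[Prop.~15]{KuW:Hinfty-Stokes}; the underlying identification of $[L^2_\si(\Om),H^1_{0,\si}(\Om)]_s$ on Lipschitz domains is handled in \cite[Thm.~2.12]{MM:JFA}. If you want to repair your argument without simply citing those, the point is that the reverse inclusion must be obtained by a route that avoids $\PP$ at the level $H^1_0(\Om)^d$: for instance, one can work with the solenoidal interpolation scale directly (as in \cite{MM:JFA}), or combine your continuous dense embedding $D(A_2^{s/2})\hookrightarrow H^s_\si(\Om)$ with self-adjointness of $A_2$ and a duality argument. Either way, an additional idea is needed beyond the endpoint interpolation you proposed.
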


Combination of Propositions~\ref{prop:fract-HH} and \ref{prop:Lip-fract-AB} yields \eqref{eq:A2-B2-P2} and, respecting Remark~\ref{rem:Cor-sa}, we can apply Corollary~\ref{cor:abstract} to obtain the following. 

\begin{theorem}\label{thm:Lip-dge3}
Let $d\ge3$ and $\Om\subseteq\R^d$ be a bounded Lipschitz domain. Then there exists $\eps>0$ such that, for $p\in(1,\infty)$ satisfying $|\frac12-\frac1p|\le\frac1{2d}+\eps$, the Stokes operator $A_p$ has a bounded $H^\infty$-calculus in $L^p_\si(\Om)$.
\end{theorem}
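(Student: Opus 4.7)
The strategy is a direct application of Corollary~\ref{cor:abstract} together with Remark~\ref{rem:Cor-sa}. The plan is to take $(B_p)_{p\in I}$ to be the consistent family of negative Dirichlet Laplacians on $L^p(\Om)^d$ and $(A_p)_{p\in I}$ to be the family of Stokes operators provided by Proposition~\ref{prop:Lip-dge3}, with $X_p = L^p_\si(\Om) = \ran \PP_p$ and $P_p = \PP_p$ the Helmholtz projection. Since the Dirichlet heat semigroup on a bounded Lipschitz domain satisfies Gaussian upper bounds, each $B_p$ has a bounded $H^\infty$-calculus on $L^p(\Om)^d$ for all $1<p<\infty$, and consistency of the resolvents is immediate.

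Fix any $p_0 \in (1,\infty) \sm \{2\}$ with $|\frac1{p_0} - \frac12| < \frac1{2d} + \eps$, where $\eps>0$ is as in Proposition~\ref{prop:Lip-dge3}, and set $I := [\min\{2,p_0\}, \max\{2,p_0\}]$. Since $\frac1{2d} < \frac16$ for $d \ge 3$, the Fabes--Mendez--Mitrea result makes $(\PP_p)_{p \in I}$ a consistent family of bounded projections on $L^p(\Om)^d$, and $\PP_2$ is orthogonal and hence self-adjoint. Proposition~\ref{prop:Lip-dge3} then supplies the $R$-sectoriality of the consistent family $(A_p)_{p \in I}$ in $X_p$. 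Since $\Om$ is bounded, both $A_2$ and $B_2$ have a positive spectral gap by Poincar\'e's inequality, so $0 \in \rho(A_2) \cap \rho(B_2)$, and both operators are self-adjoint (in particular, associated with closed sectorial forms).

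The main verification is the fractional-domain compatibility condition \eqref{eq:A2-B2-P2}. By Remark~\ref{rem:Cor-sa}, it suffices to establish it for a single $\al \in (0,1/2)$. Pick $\al \in (0,1/4)$ and set $s := 2\al \in (0,1/2)$. Proposition~\ref{prop:Lip-fract-AB} gives $D(B_2^\al) = H^s(\Om)^d$ and $D(A_2^\al) = H^s(\Om)^d \cap L^2_\si(\Om)$, while Proposition~\ref{prop:fract-HH} supplies boundedness of $\PP$ on $H^s(\Om)^d$ together with the topological direct sum decomposition
\[
 H^s(\Om)^d \;=\; \bigl(H^s(\Om)^d \cap L^2_\si(\Om)\bigr) \;\oplus\; \nabla H^{s+1}(\Om).
\]
Consequently, $\PP_2(D(B_2^\al)) = H^s(\Om)^d \cap L^2_\si(\Om) = D(A_2^\al) = D(B_2^\al) \cap X_2$, which is exactly \eqref{eq:A2-B2-P2}.

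Corollary~\ref{cor:abstract} then delivers the bounded $H^\infty$-calculus for $A_p$ on $L^p_\si(\Om)$ for every $p \in I \sm \{p_0\}$. Letting $p_0$ approach both endpoints of the interval $\{p \in (1,\infty) : |\frac12 - \frac1p| < \frac1{2d} + \eps\}$ sweeps out the full range claimed in the theorem. There is no serious obstacle here, since the essential analytic input, namely the $R$-sectoriality in Proposition~\ref{prop:Lip-dge3} and the fractional-domain identifications in Propositions~\ref{prop:fract-HH} and~\ref{prop:Lip-fract-AB}, has already been imported; the proof amounts to assembling these pieces within the framework of the abstract comparison result.
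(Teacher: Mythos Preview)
Your proof is correct and follows exactly the approach of the paper: combine Propositions~\ref{prop:fract-HH} and~\ref{prop:Lip-fract-AB} to verify \eqref{eq:A2-B2-P2}, then invoke Corollary~\ref{cor:abstract} via Remark~\ref{rem:Cor-sa}, with Proposition~\ref{prop:Lip-dge3} supplying the $R$-sectoriality. The only cosmetic point is that your sweep over $p_0$ yields the open range $|\tfrac12-\tfrac1p|<\tfrac1{2d}+\eps$, whereas the statement has $\le$; this is harmless since the $\eps$ in the theorem may be taken slightly smaller than the one in Proposition~\ref{prop:Lip-dge3}.
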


In \cite{GT}, the corresponding counterpart for $d=2$ has been shown, where the range for $p$ is larger and matches the range for validity of the $L^p$-Helmholtz decomposition \eqref{eq:HH-dec}, see \cite[Thm. 1.5]{GT}.

\begin{theorem}\label{thm:Lip-d2}
Let $\Om\subseteq\R^2$ be a bounded Lipschitz domain. For $p\in(1,\infty)$ satisfying $|\frac12-\frac1p|\le\frac14+\eps$ the Stokes operator $A_p$ has a bounded $H^\infty$-calculus in $L^p_\si(\Om)$.
\end{theorem}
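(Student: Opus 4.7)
The strategy is to apply Corollary~\ref{cor:abstract} with Remark~\ref{rem:Cor-sa}, in direct parallel with the proof of Theorem~\ref{thm:Lip-dge3}. For $B$ I would take the negative Dirichlet Laplacian on $L^2(\Om)^2$, acting componentwise; it is self-adjoint and non-negative and extends to a consistent family $(B_p)_{p\in(1,\infty)}$ in $L^p(\Om)^2$ each enjoying a bounded $H^\infty$-calculus. For $A$ I take the Stokes operator with no-slip boundary conditions, which is self-adjoint and non-negative in $L^2_\si(\Om)$, so that $P_2=\PP$ is orthogonal and both $A_2$ and $B_2$ are associated with closed symmetric forms. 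By Remark~\ref{rem:Cor-sa} it then suffices to verify \eqref{eq:A2-B2-P2} for a single $\al\in(0,\tfrac12)$ and to establish $R$-sectoriality of $A_p$ on the asserted $p$-range.

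The fractional domain identity is dimension-free. For any $0<s<\tfrac12$, Proposition~\ref{prop:Lip-fract-AB} yields $D(B_2^{s/2})=H^s(\Om)^2$ and $D(A_2^{s/2})=H^s(\Om)^2\cap L^2_\si(\Om)$, while Proposition~\ref{prop:fract-HH} provides the direct sum decomposition $H^s(\Om)^2=H^s_\si(\Om)\oplus\nabla H^{s+1}(\Om)$ with $H^s_\si(\Om)=H^s(\Om)^2\cap L^2_\si(\Om)$ and $\PP$ bounded on $H^s(\Om)^2$. Taking for instance $\al=s/2$ with $s\in(0,1/2)$, one reads off
\[
 \PP\big(D(B_2^{\al})\big)=D(A_2^{\al})=D(B_2^{\al})\cap L^2_\si(\Om),
\]
which is precisely \eqref{eq:A2-B2-P2} after shifting $A$ and $B$ by an $\eps>0$ to land in the boundedly invertible case required by Corollary~\ref{cor:abstract}.

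The substantive step, and the main obstacle, is the $R$-sectoriality of $A_p$ on the planar range $|\tfrac12-\tfrac1p|\le\tfrac14+\eps$. In dimensions $d\ge3$, Proposition~\ref{prop:Lip-dge3} is proved by adapting Shen's real-variable extrapolation (\cite{Shen2012}) along the lines already used in Section~\ref{sec:StokesNBC}: one verifies a weak reverse Hölder inequality of the form \eqref{Eq: Generalized weak reverse Hoelder inequality IR^d} for the Stokes resolvent vector, exploiting the $L^2$ resolvent estimates together with a Caccioppoli-type inequality and Sobolev embedding. In $d=2$ the gain is that the $L^p$-Helmholtz decomposition of \cite{DMitrea} is available on the wider range $|\tfrac12-\tfrac1p|\le\tfrac14+\eps$, so the underlying Helmholtz projections $\PP_p$ and $\mathcal{Q}_p$ are bounded throughout this range. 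One must therefore rerun the Shen-type argument (in its $\ell^2$-valued square-function formulation, as in Steps~1--3 of the proof of Theorem~\ref{Thm: R-secoriality Neumann}) with the exponent $p_0>2$ chosen freely, using the planar Sobolev embedding $H^1\hookrightarrow L^{p_0}$ for arbitrary $p_0<\infty$; this is where the endpoint $\tfrac14+\eps$ enters, rather than the higher-dimensional $\tfrac{1}{2d}+\eps$. The delicate point is the boundary case: near a Lipschitz boundary cube one must split the solution into local and harmonic-type parts and bound the harmonic part using a planar reverse Hölder estimate adapted to the no-slip condition, which is where the sharper $p$-range of \cite{DMitrea} can be inherited.

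With $R$-sectoriality in hand on $|\tfrac12-\tfrac1p|\le\tfrac14+\eps$, the conclusion is immediate: for each endpoint $p_0$ of this interval, Corollary~\ref{cor:abstract} together with Remark~\ref{rem:Cor-sa} transfers the bounded $H^\infty$-calculus of $\eps+B_p$ on $L^p(\Om)^2$ to the bounded $H^\infty$-calculus of $\eps+A_p$ on $L^p_\si(\Om)$ for all $p$ in the open interval between $2$ and $p_0$; the endpoints themselves are then recovered by choosing $p_0$ slightly beyond them within the admissible range and reapplying the corollary. Letting $\eps\to 0$ via the functional calculus extension to non-injective sectorial operators yields the statement for $A_p$ itself.
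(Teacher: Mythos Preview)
Your overall architecture is exactly the paper's: apply Corollary~\ref{cor:abstract} via Remark~\ref{rem:Cor-sa}, using the negative Dirichlet Laplacian as $B$, the no-slip Stokes operator as $A$, verify \eqref{eq:A2-B2-P2} through Propositions~\ref{prop:fract-HH} and~\ref{prop:Lip-fract-AB}, and feed in $R$-sectoriality on the planar range. The paper does precisely this, replacing Proposition~\ref{prop:Lip-dge3} by its two-dimensional counterpart Proposition~\ref{prop:Lip-d2}, which it simply quotes from \cite[Subsection~3.1]{GT}.

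Two corrections to your sketch of the $R$-sectoriality step. First, your explanation of \emph{why} the endpoint is $\tfrac14+\eps$ is off. You attribute the gain to the planar Sobolev embedding $H^1\hookrightarrow L^{p_0}$ for every $p_0<\infty$; but if that were the binding constraint in the Shen extrapolation, one would reach the entire range $1<p<\infty$, which is false on general Lipschitz domains. The actual bottleneck is the \emph{boundary} reverse H\"older exponent for solutions of the homogeneous Dirichlet Stokes system on a Lipschitz boundary cube: in dimension $2$ this exponent can be pushed up to $4+\eps$ (rather than $\tfrac{2d}{d-1}+\eps$ for $d\ge3$), and it is this regularity input---established in \cite{GT} and related to the planar Stokes boundary theory, not to \cite{DMitrea}, which concerns the Hodge/Helmholtz decomposition---that produces the range $|\tfrac12-\tfrac1p|<\tfrac14+\eps$. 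The coincidence with the Helmholtz range of \cite{DMitrea} is a matching of endpoints, not a causal mechanism.

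Second, the shift by $\eps>0$ and the subsequent passage $\eps\to0$ are unnecessary here: on a bounded domain both the Dirichlet Laplacian and the no-slip Stokes operator are boundedly invertible by the Poincar\'e inequality, so $0\in\rho(A_2)\cap\rho(B_2)$ holds outright and Corollary~\ref{cor:abstract} applies without modification.
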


The proof of Theorem~\ref{thm:Lip-d2} is done in the same way as the proof of Theorem~\ref{thm:Lip-dge3}. We just have to replace the assertion of Proposition~\ref{prop:Lip-dge3} by the following counterpart for $d=2$ (see \cite[Subsection 3.1]{GT}).

\begin{prop}\label{prop:Lip-d2}
Let $\Om\subseteq\R^2$ be a bounded Lipschitz domain. For $\theta\in(\frac\pi2,\pi)$ there exists $\eps>0$ such that, for $p\in(1,\infty)$ satisfying $|\frac12-\frac1p|\le \frac1{4}+\eps$,  the set 
$$
 \{(|\la|+1)(\la+A_p)^{-1}:\la\in\Si_\theta\}\subseteq \sL(L^p_\si(\Om))
$$
is $R$-bounded.
\end{prop}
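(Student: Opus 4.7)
The plan is to mirror the proof of Theorem~\ref{Thm: R-secoriality Neumann} (with no-slip in place of Neumann type boundary conditions) and to adapt Shen's argument from \cite{Shen2012} to $d=2$ and to the vector-valued setting needed for $R$-boundedness. First, I would use the square function characterization explained in Step~1 of the proof of Theorem~\ref{Thm: R-secoriality Neumann}: the $R$-boundedness of $\{(|\la|+1)(\la+A_p)^{-1}:\la\in\Si_\theta\}$ in $L^p_\si(\Om)$ is equivalent to the uniform $L^p(\Om;\ell^2(\C^2))\to L^p(\Om;\ell^2(\C^2))$ boundedness of the maps
\[
 T(f_k)_{k=1}^{k_0}:=\bigl((|\la_k|+1)(\la_k+A)^{-1}f_k\bigr)_{k=1}^{k_0},
\]
taken over all finite choices $(\la_k)\subseteq\Si_\theta$ and $(f_k)\subseteq C^\infty_{c,\si}(\Om)$. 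The $L^2$-bound of $T$ is immediate from the self-adjointness of $A$ in $L^2_\si(\Om)$ together with the bounded $H^\infty$-calculus of self-adjoint operators.

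Second, I would apply the vector-valued Shen extrapolation theorem (Theorem~\ref{Thm: Modified Shen whole space}) with $\Xi=\Om$, $X=Y=\ell^2(\C^2)$, and some exponent $p_0>2$ that can be chosen \emph{arbitrarily large} in the planar case, since Sobolev embedding in $\R^2$ gives $H^1(\Om)\emb L^{p_0}(\Om)$ for every $p_0<\infty$. It is this freedom that turns the $d\ge3$ threshold $\tfrac1{2d}$ into the larger threshold $\tfrac14$ in $d=2$. Fixing such a $p_0$, the only nontrivial task is to verify the weak reverse Hölder estimate \eqref{Eq: Generalized weak reverse Hoelder inequality IR^d} for $T$.

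Third, following Cases~1--3 of the proof of Theorem~\ref{Thm: R-secoriality Neumann}, I would distinguish large cubes (handled by the weak-type $(1,1)$ bound for the maximal operator and the $L^2$-bound of $T$), interior cubes with $2Q^*\cap\partial\Om=\emptyset$, and boundary cubes. In the boundary case, pick $y\in 2Q^*\cap\partial\Om$, let $\mathcal{R}:=Q(y,4r)$, and split $u_k:=(\la_k+A)^{-1}f_k=v_k+w_k$ on $8\mathcal{R}\cap\Om$, where $v_k$ solves the resolvent problem for the no-slip Stokes operator on the Lipschitz subdomain $8\mathcal{R}\cap\Om$ with right-hand side $f_k|_{8\mathcal{R}\cap\Om}$, and $w_k$ solves the homogeneous problem with no-slip condition only on $(8\mathcal{R})\cap\partial\Om$. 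The $v_k$-part is controlled via the $L^2$-bound on the subdomain, while the $w_k$-part requires a local Meyers-type reverse Hölder estimate
\[
 \Bigl(\fint_{\mathcal{R}\cap\Om}\!\!\sum_k\lvert(|\la_k|+1)w_k\rvert^{p_0}\,dx\Bigr)^{1/p_0}
 \lesssim \Bigl(\fint_{4\mathcal{R}\cap\Om}\!\!\sum_k\lvert(|\la_k|+1)w_k\rvert^{2}\,dx\Bigr)^{1/2},
\]
obtained by applying the chain rule to the $\ell^2$-norm (as in the derivation of \eqref{Eq: Reverse Holder intermediate}), the 2D Sobolev inequality on the convex set $\mathcal{R}\cap\Om$, and a Caccioppoli-type estimate for the homogeneous no-slip Stokes system on a Lipschitz graph domain. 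The Caccioppoli estimate on $4\mathcal{R}\cap\Om$ and the second-order control it provides are precisely the planar variant of the ingredients used by Shen in \cite{Shen2012} and underlying Proposition~\ref{prop:Lip-dge3}; they are available in $d=2$ without restriction on $p_0$. Adding and subtracting $v_k$ under the average, and using the $L^2$-bound for $v_k$, recovers $\|(f_k)\|_X$ on the right-hand side, yielding \eqref{Eq: Generalized weak reverse Hoelder inequality IR^d}.

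The main obstacle is the boundary reverse Hölder estimate for the homogeneous no-slip Stokes system on $4\mathcal{R}\cap\Om$: one must transfer the $L^2$-second-order Caccioppoli bound from \cite[Prop.~4.12]{Tolksdorf_convex} (which was stated for convex domains and Neumann type conditions) to the setting of no-slip conditions on a bounded Lipschitz domain in $\R^2$. This is exactly the step worked out in \cite[Subsection~3.1]{GT}, where the planar Lipschitz geometry together with the $L^2$ resolvent theory of the Dirichlet Stokes system yields the Caccioppoli estimate that drives the Shen iteration. Once this estimate is in hand, Shen's extrapolation produces uniform boundedness of $T$ on $L^p(\Om;\ell^2(\C^2))$ for all $p\in(2,\infty)$ with $|\tfrac12-\tfrac1p|<\tfrac14+\eps$, and duality (exploiting the self-adjointness of $A$) gives the range $|\tfrac12-\tfrac1p|<\tfrac14+\eps$ with $p<2$, completing the proof.
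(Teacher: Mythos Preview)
Your overall framework---reducing $R$-boundedness to uniform $L^p(\Om;\ell^2)$-bounds via square functions and then running a vector-valued Shen extrapolation---is correct and matches what the paper cites from \cite[Subsection~3.1]{GT}. However, the way you describe the key local step contains two genuine errors.

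First, your claim that $p_0$ can be taken \emph{arbitrarily large} in $d=2$ is inconsistent with the conclusion: if the extrapolation worked for every $p_0<\infty$ you would get $R$-boundedness for all $p\in(1,\infty)$, not merely for $|\tfrac12-\tfrac1p|<\tfrac14+\eps$. The threshold $\tfrac14$ is exactly $\tfrac1{2d}$ at $d=2$, and the Shen exponent in the no-slip Lipschitz setting is $p_0=\tfrac{2d}{d-1}=4$ (plus a small gain), not $\tfrac{2d}{d-2}$. The unrestricted Sobolev embedding $H^1\hookrightarrow L^{p_0}$ is the mechanism in the \emph{convex Neumann} proof of Theorem~\ref{Thm: R-secoriality Neumann}, not in the Lipschitz no-slip case.

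Second, and relatedly, for a general Lipschitz domain the set $\mathcal{R}\cap\Om$ is \emph{not} convex, so neither the Sobolev inequality with explicit constants from \cite[Prop.~6.2]{Tolksdorf_convex} nor the second-order estimate \cite[Prop.~4.12]{Tolksdorf_convex} is available. The reverse H\"older inequality that drives Shen's iteration for the Dirichlet Stokes resolvent does not come from an $H^2$-bound plus Sobolev; it comes from boundary regularity for the homogeneous Dirichlet Stokes system on Lipschitz graph domains (nontangential maximal function estimates and Rellich-type identities), which in $d=2$ yield the exponent $p_0=4+\eps$. This is precisely what \cite{Shen2012} establishes for $d\ge3$ and what \cite[Subsection~3.1]{GT} adapts to $d=2$. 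Your sketch has the right skeleton but grafts onto it the convex/Neumann machinery, which does not survive the passage to Lipschitz/no-slip; replacing that step by the boundary reverse H\"older estimate from \cite{GT} fixes the argument.
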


\subsection{Domains of fractional powers}
Boundedness of the $H^\infty$-calculus in Theorems~\ref{thm:Lip-dge3} and \ref{thm:Lip-d2} has been used to identify fractional domain spaces $D(A_p^\theta)$ in \cite[Thm. 1.1]{Tolksdorf} for $d\ge3$ and $\theta=\frac12$ and 
\cite[Thm. 1.3]{GT} for $d=2$ and more general $\theta$. Those results extended the result contained in \cite[Thm.~16]{KuW:Hinfty-Stokes}, which was obtained by abstract means.
It is remarked in \cite[p. 256]{GT} that the same approach as in \cite{GT} works in also in dimensions $d\ge3$. We present the precise formulation here and sketch elements of the proof.

\begin{theorem}\label{thm:fract-dom-Lip}
Let $\Om\subseteq\R^d$ be a bounded Lipschitz domain where $d\ge2$. There exists $\eps>0$ such that, for $p$ satisfying $|\frac12-\frac1p|<\frac1{2d}+\eps$ and $\theta\in(0,1)$, we have
\begin{align}\label{eq:fract-dom-Lip-incl}
                 H^{2\theta,p}_{0,\si}(\Om)\subseteq D(A_p^\theta).
\end{align}
Moreover, there exists $\del\in(0,1]$ such that, for $q$ and $\theta$ satisfying in addition
\begin{align}\label{eq:fract-dom-Lip-cond}
 \theta<\frac12+\frac1{2p}\ \mbox{if}\ \frac12-\frac1p\le\frac\del2\quad\mbox{or}\quad
 \theta<\frac{3 - d}{4}+\frac{d}{2p}+\frac{(d - 1) \delta}{4}\ \mbox{if}\ \frac12-\frac1p>\frac\del2,
\end{align}
we have (with equivalent norms)
\begin{align}\label{eq:fract-dom-Lip}
       H^{2\theta,p}_{0,\si}(\Om)= D(A_p^\theta).
\end{align}
Here, the spaces $H^{s , p}_{0 , \sigma} (\Omega)$ are defined as the spaces $V^{s , p} (\Omega)$ in~\cite{MM:JFA}. 
\end{theorem}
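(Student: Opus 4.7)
The plan is to exploit the bounded $H^\infty$-calculus of $A_p$ obtained in Theorems~\ref{thm:Lip-dge3} and~\ref{thm:Lip-d2}. By Subsection~\ref{sub:BIP}, specifically~\eqref{eq:intpol-dom}, this yields the complex interpolation identity
\begin{equation*}
 D(A_p^\theta) = [L^p_\sigma(\Omega),\,D(A_p)]_\theta, \qquad \theta \in (0,1),
\end{equation*}
with equivalent norms throughout the range $|\tfrac{1}{p}-\tfrac{1}{2}|<\tfrac{1}{2d}+\eps$. The whole task thereby reduces to comparing this interpolation scale with the solenoidal Sobolev scale $H^{s,p}_{0,\sigma}(\Omega)=V^{s,p}(\Omega)$ from~\cite{MM:JFA}, whose crucial property is that complex interpolation behaves as expected, i.e., $[H^{s_0,p}_{0,\sigma}(\Omega),H^{s_1,p}_{0,\sigma}(\Omega)]_\theta = H^{(1-\theta)s_0+\theta s_1,\,p}_{0,\sigma}(\Omega)$ on the relevant parameter strip.

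For the inclusion~\eqref{eq:fract-dom-Lip-incl}, I first verify the endpoint embedding $H^{2,p}_{0,\sigma}(\Omega)\hookrightarrow D(A_p)$. Given $u\in H^{2,p}_{0,\sigma}(\Omega)$ we have $u\in W^{1,p}_0(\Omega)^d$, $\dive u=0$, and $\Delta u\in L^p(\Omega)^d$; the $L^p$-Helmholtz decomposition~\eqref{eq:HH-dec}, valid for $|\tfrac{1}{p}-\tfrac{1}{2}|<\tfrac{1}{2d}+\eps$ by~\cite{Fabes-Mendez-Mitrea} (and~\cite{DMitrea} for $d=2$), furnishes $g\in W^{1,p}(\Omega)$ with $-\Delta u+\nabla g\in L^p_\sigma(\Omega)$, so that $u\in D(A_p)$ with a continuous estimate. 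Interpolating against the trivial endpoint $L^p_\sigma(\Omega)=L^p_\sigma(\Omega)$ then gives
\begin{equation*}
 H^{2\theta,p}_{0,\sigma}(\Omega) = [L^p_\sigma(\Omega),H^{2,p}_{0,\sigma}(\Omega)]_\theta \hookrightarrow [L^p_\sigma(\Omega),D(A_p)]_\theta = D(A_p^\theta),
\end{equation*}
which is~\eqref{eq:fract-dom-Lip-incl}.

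For the equality~\eqref{eq:fract-dom-Lip} under the restriction~\eqref{eq:fract-dom-Lip-cond} I aim for the reverse embedding $D(A_p^\theta)\hookrightarrow H^{2\theta,p}_{0,\sigma}(\Omega)$. Here the parameter $\delta\in(0,1]$ enters as the optimal Sobolev regularity exponent for the Stokes problem on the Lipschitz domain $\Omega$: for $|\tfrac{1}{p}-\tfrac{1}{2}|\le\tfrac{\delta}{2}$ the operator $A_p$ can be realised as a topological isomorphism between well-chosen solenoidal Sobolev endpoints, supplying an embedding $D(A_p^{\kappa_*})\hookrightarrow H^{2\kappa_*,p}_{0,\sigma}(\Omega)$ for some $\kappa_*$ beyond $\theta$, and a further complex interpolation against the trivial endpoint $\kappa=0$ covers all $\theta<\tfrac{1}{2}+\tfrac{1}{2p}$. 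When $|\tfrac{1}{p}-\tfrac{1}{2}|>\tfrac{\delta}{2}$, the Stokes isomorphism is only available at a smaller fractional exponent and a dimension-dependent Sobolev trade-off produces the second alternative in~\eqref{eq:fract-dom-Lip-cond}. The two regimes match at the trace threshold $2\theta=1/p$, below which the boundary condition encoded in $H^{2\theta,p}_{0,\sigma}$ becomes vacuous and one need not track it separately.

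The main obstacle will be this reverse embedding at the sharp threshold, i.e., identifying $D(A_p^{\kappa_*})$ with the correct Sobolev space up to the largest admissible $\kappa_*$. It is here that one must combine the $L^p$-resolvent bounds built in~\cite{Shen2012} with the sharp Sobolev-regularity results of~\cite{MM:JFA} and a careful choice of interpolation endpoints straddling the trace threshold $2\kappa=1/p$. The argument then proceeds essentially as in~\cite[Subsection~3.2]{GT}, with the $\tfrac{1}{2d}+\eps$-range of validity of the Helmholtz decomposition for $d\ge 3$ replacing the $\tfrac{1}{4}+\eps$-range exploited there in dimension two.
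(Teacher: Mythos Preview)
Your treatment of the inclusion~\eqref{eq:fract-dom-Lip-incl} matches the paper's argument: establish the endpoint $W^{2,p}_{0,\sigma}(\Omega)\hookrightarrow D(A_p)$, invoke the bounded $H^\infty$-calculus via~\eqref{eq:intpol-dom}, and interpolate using~\cite[Thm.~2.12]{MM:JFA}.

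For the reverse embedding, however, there is a genuine gap. You claim that the Stokes isomorphism on a Sobolev scale ``supplies an embedding $D(A_p^{\kappa_*})\hookrightarrow H^{2\kappa_*,p}_{0,\sigma}(\Omega)$ for some $\kappa_*$ beyond $\theta$'', but you do not say how. The regularity theory for Lipschitz domains gives an order-two shift for the \emph{full} inverse, i.e., boundedness of $A_p^{-1}$ between Sobolev spaces of smoothness $s$ and $s+2$ for $s$ in a limited range; it says nothing directly about the \emph{fractional} inverse $A_p^{-\kappa_*}$. If you try to extract the fractional statement by interpolating $D(A_p)\hookrightarrow H^{s,p}_{0,\sigma}(\Omega)$ (with the best available $s<2$) against $L^p_\sigma(\Omega)$, you only obtain $D(A_p^\theta)\hookrightarrow H^{\theta s,p}_{0,\sigma}(\Omega)$, which falls short of $H^{2\theta,p}_{0,\sigma}(\Omega)$ since $\theta s<2\theta$. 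Your subsequent interpolation from $\kappa_*$ down to $\theta$ is fine \emph{once} the $\kappa_*$ case is in hand, but that first step is the hard one and is not justified.

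The paper closes this gap by a duality-and-bootstrap argument that you are missing. To show $A_p^{-\theta}:L^p_\sigma(\Omega)\to H^{2\theta,p}_{0,\sigma}(\Omega)$ is bounded, one passes by duality to $A_{p'}^{-\theta}\PP_{p'}:H^{-2\theta,p'}(\Omega)^d\to L^{p'}_\sigma(\Omega)$, then writes $A_{p'}^{-\theta}=A_{p'}^{1-\theta}A_{p'}^{-1}$ and uses the already-proved inclusion~\eqref{eq:fract-dom-Lip-incl} at the pair $(p',1-\theta)$, namely $H^{2-2\theta,p'}_{0,\sigma}(\Omega)\hookrightarrow D(A_{p'}^{1-\theta})$. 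This reduces everything to boundedness of $A_{p'}^{-1}\PP_{p'}:H^{-2\theta,p'}(\Omega)^d\to H^{2-2\theta,p'}_{0,\sigma}(\Omega)$, which is precisely the order-two solvability statement for the stationary Stokes system. The sharp source for this last ingredient is Mitrea--Wright~\cite[Thm.~10.6.2]{MWr} on the Besov/Triebel--Lizorkin scale, not~\cite{MM:JFA}; the parameter $\delta$ in~\eqref{eq:fract-dom-Lip-cond} is the one coming from~\cite{MWr}. So the key idea you need to add is: dualize, then feed the inclusion~\eqref{eq:fract-dom-Lip-incl} for the complementary exponent $1-\theta$ back into the argument so that only the full (not fractional) Stokes inverse has to be controlled.
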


\begin{proof}[Sketch of proof]
The first step is to show, for $p$ with  $|\frac12-\frac1p|<\frac1{2d}+\eps$, the inclusion $W^{2,p}_{0,\si}(\Om)\subseteq D(A_p)$, see \cite[Lem. 2.5]{Tolksdorf} for $d\ge3$ and (with the same proof) \cite[Lem. 3.8]{GT} for $d=2$.

Then Theorems~\ref{thm:Lip-dge3} and \ref{thm:Lip-d2} imply via Subsection~\ref{sub:BIP} that
$$
 \big[ L^p_\si(\Om),W^{2,p}_{0,\si}(\Om) \big]_\theta\subseteq D(A_p^\theta),\quad \theta\in(0,1).
$$
By \cite[Thm. 2.12]{MM:JFA} the space on the left hand side equals $H^{2\theta,p}_{0,\si}(\Om)$, and \eqref{eq:fract-dom-Lip-incl} is proved.

The proof of \eqref{eq:fract-dom-Lip} under the conditions \eqref{eq:fract-dom-Lip-cond} relies on results by Mitrea and Wright (see \cite[Thm. 10.6.2]{MWr}) on solvability of the stationary Stokes system in the scale of Besov- and Triebel-Lizorkin spaces. The latter comprises the spaces $H^s_p$ that are relevant here. 

In order to show boundedness of 
$$
 A_p^{-\theta}:L^p_\si(\Om)\to H^{2\theta,p}_{0,\si}(\Om)
$$ 
it suffices by duality to show boundedness 
$$
 A_{p'}^{-\theta}\PP_{p'}:H^{-2\theta,p'}(\Om)^d\to L^{p'}_\si(\Om).
$$ 
Via \eqref{eq:fract-dom-Lip-incl} this can be reduced to showing boundedness of 
$$
 A_{p'}^{-1}\PP_{p'}:H^{-2\theta,p}(\Om)^d \to H^{2-2\theta,p'}_{0,\si}(\Om).
$$
Here, the results from \cite{MWr} on the Stokes system come into play, for details we refer to \cite{GT}.
\end{proof}

\section{Stokes operators on unbounded domains}\label{sec:unbdd}

In this section we consider Stokes operators with no-slip boundary conditions on unbounded domains. In contrast to the two previous sections the boundary of $\Om$ will be relatively smooth. The main issue is that unboundedness of the domain may cause the Helmholtz decomposition to fail in $L^p(\Om)^d$ for certain $p\neq2$, even if the boundary $\partial\Om$ is smooth, see \cite{MaBo}. Two different ideas have been used in the theory for Stokes operators for such domains. 
There is the $\wt{L}^p$-theory by Farwig, Kozono, and Sohr (see \cite{FKS:ArchMath}, \cite{FKS:maxreg}), and there is a by \emph{relative $L^p$-theory} Gei{\ss}ert, Heck, Hieber, and Sawada (see \cite{GHHS}), where a Helmholtz decomposition for $L^p(\Om)^d$ is \emph{assumed}. Here, it is assumed that $\Om\subseteq\R^d$ is a uniform $C^{1,1}$-domain or a uniform $C^3$-domain, respectively.

\subsection{$\wt{L}^p$-theory}
 Farwig, Kozono, and Sohr established a theory for the Stokes operator in $\wt{L}^p$-spaces, where   
$$
 \wt{L}^p(\Om):=\left\{\begin{aligned}
                           &L^p(\Om)\cap L^2(\Om), && 2\le p<\infty,\\
                            &L^p(\Om)+ L^2(\Om), && 1<p<2.\\
                           \end{aligned}
                           \right.
$$
If $\Om\subseteq\R^d$ is a uniform $C^1$-domain, they showed in \cite{FKS:ArchMath} the Helmholtz decomposition
$$
 \wt{L}^p(\Om)^d=\wt{L}^p_\si(\Om)\oplus \wt{G}^p(\Om),\quad 1<p<\infty,
$$
with corresponding projecton operator $\wt{\PP}_p$, where     
$$
 \wt{L}^p_\si(\Om):=\left\{\begin{aligned}
                           &L^p_\si(\Om)\cap L_\si^2(\Om), && 2\le p<\infty,\\
                            &L^p_\si(\Om)+ L_\si^2(\Om), && 1<p<2,\\
                           \end{aligned}
                           \right.
\qquad
 \wt{G}^p(\Om):=\left\{\begin{aligned}
                           &G^p(\Om)\cap G^2(\Om), && 2\le p<\infty,\\
                            &G^p(\Om)+ G^2(\Om), && 1<p<2,\\
                           \end{aligned}
                           \right.
$$
and $G^p(\Om)$ is now given as $G^p(\Om)=\{\nabla g\in L^p(\Om)^d: g\in L^p_\loc(\ov{\Om})\}$.
Denoting by $D^p(\Om):=W^{2,p}(\Om)\cap W^{1,p}_0(\Om)$ the domain of the Dirichlet Laplacian in $L^p(\Om)$, and defining in an analogous way spaces $\wt{D}^p(\Om)$, $1<p<\infty$, the Stokes operator $\wt{A}_p$ with no-slip boundary conditions in $\wt{L}^p_\si(\Om)$ is defined by
$$
 \wt{A}_pu:=-\wt{\PP}_p\Delta u,\quad u\in D(\wt{A}_p):=\wt{D}^p(\Om)^d\cap \wt{L}^p_\si(\Om).
$$
The following has been shown in \cite[Thm. 1.3 and 1.4]{FKS:maxreg}. 

\begin{theorem}\label{thm:qtilde-maxreg}
Let $\Om\subseteq\R^d$ be a uniform $C^{1,1}$-domain. For $1<p<\infty$ and $\del>0$, the Stokes operator $\del+\wt{A}_p$ has maximal $L^q$-regularity in $\wt{L}^p_\si(\Om)$ for $1<q<\infty$, and hence is $R$-sectorial in $\wt{L}^p_\si(\Om)$ of angle $<\frac\pi2$.
\end{theorem}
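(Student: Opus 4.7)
Since $\wt{L}^p_\si(\Om)$ is a closed subspace of the $UMD$-space $L^p(\Om)\cap L^2(\Om)$ (for $p\ge2$) or $L^p(\Om)+L^2(\Om)$ (for $p<2$), Weis's characterization recalled in Subsection~\ref{sub:R-sec} reduces the maximal $L^q$-regularity assertion to $R$-sectoriality of $\del+\wt{A}_p$ of angle strictly smaller than $\pi/2$. I would therefore concentrate on establishing the latter, the maximal regularity conclusion then being automatic.

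The strategy is a standard localization-and-perturbation scheme, carried out throughout in the $\wt{L}^p$-scale. For the half-space $\Om=\R^d_+$ the Helmholtz decomposition holds in every $L^p(\R^d_+)^d$ with $1<p<\infty$, and the Stokes resolvent admits a well-known representation as an operator-valued Fourier multiplier in the tangential variables. An operator-valued multiplier theorem of Mikhlin or Kalton--Weis type (as used in \cite{pruess} and \cite{abels}) produces $R$-sectoriality of angle $<\pi/2$ in $L^p_\si(\R^d_+)$ for all such $p$, and by forming intersections and sums these estimates pass to $\wt{L}^p_\si(\R^d_+)$. For a uniform $C^{1,1}$-domain $\Om$, I would then use a uniformly locally finite atlas of boundary charts straightening $\partial\Om$ with geometric constants independent of the patch, a subordinate partition of unity, and Bogovski\u{\i}'s corrector applied simultaneously in $L^p$ and $L^2$ on each chart, so as to reduce the resolvent problem on $\Om$ to a perturbation of the half-space model. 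The lower-order commutator terms produced by the localization are absorbed by choosing $\del>0$ sufficiently large, which is permissible in view of the shift in the statement. Tracking $R$-bounds rather than mere norm bounds through every step then yields the desired $R$-sectoriality of $\del+\wt{A}_p$ of angle $<\pi/2$.

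The hard part is making the localization compatible with the $\wt{L}^p$-norm, which encodes both $L^p$- and $L^2$-information simultaneously: every auxiliary construction (Bogovski\u{\i} corrector, multiplication by cutoffs, coordinate diffeomorphism) must be bounded on both scales with constants depending only on the geometry of the chart, and the pieces produced must glue to a single object on $\wt{L}^p_\si(\Om)$. Precisely this is why the uniform $C^{1,1}$-assumption on $\partial\Om$ is crucial, and why the argument cannot be localized to $L^p$ alone when $L^p_\si(\Om)$ fails to carry a bounded Helmholtz projection. The $\wt{L}^p$-Helmholtz decomposition of Farwig--Kozono--Sohr supplies the essential substitute that makes the scheme go through; combining its projection $\wt\PP_p$ with the chart-wise half-space estimates and absorbing the commutators gives the $R$-sectoriality, from which Weis's theorem then extracts the full maximal $L^q$-regularity for every $q\in(1,\infty)$.
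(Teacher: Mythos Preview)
The paper does not prove this theorem: it is quoted from \cite[Thm.~1.3 and 1.4]{FKS:maxreg}, and the clause ``and hence is $R$-sectorial'' is simply the easy direction of Weis's characterization (maximal $L^q$-regularity implies $R$-sectoriality of angle $<\pi/2$). There is thus nothing in the present paper to compare your proposal against; the result is imported wholesale as input for Theorem~\ref{thm:qtilde-hinfty}.

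For what it is worth, your sketch inverts the logical order of the original source: Farwig--Kozono--Sohr establish maximal $L^q$-regularity directly, and $R$-sectoriality is then a consequence via Weis. You propose instead to obtain $R$-sectoriality first through $R$-bounded resolvent estimates and then invoke Weis in the other direction. Both routes rest on the same localization machinery you outline, and your identification of the essential difficulty --- that the Bogovski\u{\i} correctors, cutoffs, and chart diffeomorphisms must act simultaneously and consistently on $L^p$ and $L^2$ with chart-independent bounds, which is precisely what the $\wt L^p$-Helmholtz decomposition of \cite{FKS:ArchMath} makes possible --- is correct. The extra bookkeeping in your route is that $R$-bounds rather than operator norms or $L^q$-in-time estimates must be propagated through every perturbation step; this is feasible but not lighter than the direct maximal-regularity localization.
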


This has been used in the proof of the following, which is \cite[Thm. 1.1]{K:hinfty-stokes}.

\begin{theorem}\label{thm:qtilde-hinfty}
Let $\Om\subseteq\R^d$ be a uniform $C^{1,1}$-domain. For $1<p<\infty$ and $\del>0$, the Stokes operator $\del+\wt{A}_p$ has a bounded $H^\infty$-calculus in $\wt{L}^p_\si(\Om)$.
\end{theorem}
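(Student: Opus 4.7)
The plan is to apply Corollary~\ref{cor:abstract} together with Remark~\ref{rem:Cor-sa}, but in the scale $\{\wt{L}^p(\Om) : p \in (1,\infty)\}$ in place of the classical $L^p$-scale. The comparison operator will be the shifted negative Dirichlet Laplacian $\del + \wt{B}_p$ on $\wt{L}^p(\Om)^d$, connected to $\del + \wt{A}_p$ via the Helmholtz projection $\wt{\PP}_p$ of Farwig-Kozono-Sohr, whose $L^2$-realization $\wt{\PP}_2 = \PP$ is self-adjoint. Each $\wt{L}^p(\Om)$ is $B$-convex (as intersection or sum of two $B$-convex $L^p$-spaces) and the scale is stable under complex interpolation with base point $\wt{L}^2(\Om) = L^2(\Om)$, so the proof of Theorem~\ref{thm:abstract} transfers to this setting essentially verbatim.

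To verify the hypotheses, first the $R$-sectoriality of $\del + \wt{A}_p$ of angle $<\pi/2$ in $\wt{L}^p_\si(\Om)$ is precisely Theorem~\ref{thm:qtilde-maxreg}. Second, the bounded $H^\infty$-calculus of $\del + \wt{B}_p$ in $\wt{L}^p(\Om)^d$ follows by consistency from the classical bounded $H^\infty$-calculus of the Dirichlet Laplacian in $L^p(\Om)^d$, which is available on uniform $C^{1,1}$-domains via Gaussian heat-kernel estimates, together with self-adjointness in $L^2(\Om)^d$: one transfers the calculus to $L^p \cap L^2$ or $L^p + L^2$ by consistency of the resolvents.

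For the fractional domain comparison at $p = 2$, both $\del + A_2$ and $\del + B_2$ are strictly positive self-adjoint operators with $0 \in \rho$. By the first part of Remark~\ref{rem:Cor-sa} it suffices to verify \eqref{eq:A2-B2-P2} for some single $\al > 0$; one chooses $\al = s/2$ with $s \in (0, 1/2)$. Form theory together with the reiteration theorem yield
\begin{align*}
 D((\del + B_2)^{s/2}) &= [L^2(\Om)^d , H^1_0(\Om)^d]_s = H^s(\Om)^d, \\
 D((\del + A_2)^{s/2}) &= [L^2_\si(\Om), H^1_{0,\si}(\Om)]_s = H^s(\Om)^d \cap L^2_\si(\Om),
\end{align*}
the right-most identifications using that boundary traces are invisible for $s < 1/2$. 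The $H^s$-Helmholtz decomposition $H^s(\Om)^d = (H^s(\Om)^d \cap L^2_\si(\Om)) \oplus \nabla H^{s+1}(\Om)$ for $|s| < 1/2$ on uniform $C^{1,1}$-domains (the analogue of Proposition~\ref{prop:fract-HH}) then gives $\PP(H^s(\Om)^d) = H^s(\Om)^d \cap L^2_\si(\Om)$, which is exactly \eqref{eq:A2-B2-P2}. Corollary~\ref{cor:abstract} now delivers the bounded $H^\infty$-calculus for $\del + \wt{A}_p$ on $\wt{L}^p_\si(\Om)$ for every $p \in (1,\infty) \sm \{2\}$ (by applying the result for a suitable $p_0$ on each side of $2$), while $p = 2$ is immediate from self-adjointness.

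The main technical obstacle is the transfer of the abstract framework from the $L^p$-scale to the $\wt{L}^p$-scale: one must revisit the proof of Theorem~\ref{thm:abstract} in \cite{KuW:Hinfty-Stokes} to confirm that the complex interpolation of the $R$-boundedness estimates and the uses of $B$-convexity go through when $L^p(\Om)$ is replaced by $\wt{L}^p(\Om)$. A secondary but more routine point is the identification of the interpolation space $[L^2_\si(\Om), H^1_{0,\si}(\Om)]_s$ with $H^s(\Om)^d \cap L^2_\si(\Om)$ and the $H^s$-Helmholtz decomposition on unbounded uniform $C^{1,1}$-domains, both of which require $H^s$-boundedness of $\PP$ in this geometric setting.
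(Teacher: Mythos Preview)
Your proposal is correct and follows essentially the same route as the paper: apply the $\wt{L}^p$-analog of Corollary~\ref{cor:abstract} and Remark~\ref{rem:Cor-sa}, using Theorem~\ref{thm:qtilde-maxreg} for $R$-sectoriality, the bounded $H^\infty$-calculus of the Dirichlet Laplacian on $\wt{L}^p(\Om)^d$, and the $L^2$-level fractional domain comparison via the $H^s$-Helmholtz decomposition. The paper handles the two technical points you flag by citing \cite[Lem.~4.1]{K:hinfty-stokes} for complex interpolation of the $\wt{L}^p$-scale and \cite[Lem.~4.3]{K:hinfty-stokes} for the mapping properties of $\PP$ between the fractional domain spaces at $p=2$.
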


The proof given in \cite{K:hinfty-stokes} was based on \cite[Thm. 8.2]{KKW}, whose proof contains a gap. The gap has been closed in \cite{KuW:erratum} by the cost of an additional assumption on the Helmholtz decomposition for $p\neq2$. For the application, this additional assumption required
more regularity of $\Om$, namely that $\Om$ is a uniform $C^{2,\al}$-domain for some $\al\in(0,1]$, we refer to the discussion on this issue in \cite{GK}.

Here, we can use the analog for $\wt{L}^q$-spaces of Corollary~\ref{cor:abstract} and Remark~\ref{rem:Cor-sa} instead of the result from \cite{KuW:erratum}. Indeed, the proof in Section~\ref{sec:abstract} is based on the fact that the $L^p$-scale is a complex interpolation scale. This persists to the $\wt{L}^p$-scale, see \cite[Lem. 4.1]{K:hinfty-stokes}, which in turn is based on results in \cite{calderon}. 

Consequently, besides Theorem~\ref{thm:qtilde-maxreg}, we only need mapping properties of the Helmholtz projection $\wt{\PP}_2=\wt{\PP}$ between fractional domain spaces in $L^2(\Om)^d$ and $L^2_\si(\Om)$, respectively. These are shown in \cite[Lem. 4.3]{K:hinfty-stokes}.  

We formulate the result on fractional domain spaces, whose proof is much easier here since we have a precise description of domain $D(\wt{A}_p)$ in $\wt{L}^p_\si(\Om)$ for $1<p<\infty$.

\begin{corollary}\label{cor:qtilde-fract}
Let $\Om\subseteq\R^d$ be a uniform $C^{1,1}$-domain. For $1<p<\infty$, $\del>0$, and $\theta\in(0,1)$, we have
$$
 D\big((\del+\wt{A}_p)^\theta\big)=D\big((\del-\wt{\Delta}_p)^\theta\big)^d\cap \wt{L}^p_\si(\Om)
 =\wt{H}^{2\theta,p}_0(\Om)^d\cap\wt{L}^p_\si(\Om),
$$
where $\wt{\Delta}_p$ denotes the Dirichlet Laplacian in $\wt{L}^p(\Om)$, which has domain $\wt{D}^p(\Om)$, and 
$$
  \wt{H}^{2\theta,p}_0(\Om)
  =\left\{\begin{aligned} &H^{2\theta,p}_0(\Om)+H^{2\theta,2}_0(\Om), && 1<p\le 2, \\
  &H^{2\theta,p}_0(\Om)\cap H^{2\theta,2}_0(\Om), && 2<p<\infty.
  \end{aligned}\right.
$$
\end{corollary}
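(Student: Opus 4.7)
The plan is to combine the bounded $H^\infty$-calculus of $\del+\wt{A}_p$ from Theorem~\ref{thm:qtilde-hinfty} with the precise domain description $D(\wt{A}_p)=\wt{D}^p(\Om)^d\cap\wt{L}^p_\si(\Om)$ built into the construction of $\wt{A}_p$, together with complex interpolation. First I would invoke Subsection~\ref{sub:BIP}: since $\del+\wt{A}_p$ has a bounded $H^\infty$-calculus, the identification \eqref{eq:intpol-dom} yields
$$D\big((\del+\wt{A}_p)^\theta\big)=\big[\wt{L}^p_\si(\Om),D(\wt{A}_p)\big]_\theta,\qquad \theta\in(0,1).$$
The same reasoning applies to the Dirichlet Laplacian $\del-\wt{\Delta}_p$ on $\wt{L}^p(\Om)$, which inherits a bounded $H^\infty$-calculus from its $L^p$- and $L^2$-counterparts (classical for uniform $C^{1,1}$-domains) via the $\wt{L}^p$-interpolation scale used in \cite[Lem.~4.1]{K:hinfty-stokes}. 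Consequently
$$D\big((\del-\wt{\Delta}_p)^\theta\big)^d=\big[\wt{L}^p(\Om)^d,\wt{D}^p(\Om)^d\big]_\theta=\wt{H}^{2\theta,p}_0(\Om)^d,$$
the last equality being the definition of $\wt{H}^{2\theta,p}_0(\Om)$ transferred from the corresponding $L^p$- and $L^2$-identifications of fractional Dirichlet domains.

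Substituting the explicit domain of the Stokes operator into the first display then reduces the corollary to the interpolation identity
$$\big[\wt{L}^p_\si(\Om),\wt{D}^p(\Om)^d\cap \wt{L}^p_\si(\Om)\big]_\theta=\big[\wt{L}^p(\Om)^d,\wt{D}^p(\Om)^d\big]_\theta\cap \wt{L}^p_\si(\Om).$$
The inclusion $\subseteq$ is immediate from functoriality of the complex method applied to the continuous embeddings $\wt{L}^p_\si(\Om)\emb\wt{L}^p(\Om)^d$ and $\wt{D}^p(\Om)^d\cap \wt{L}^p_\si(\Om)\emb\wt{D}^p(\Om)^d$, together with the observation that the resulting element lies in the closed subspace $\wt{L}^p_\si(\Om)$. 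For the reverse inclusion I would use that $\wt{L}^p_\si(\Om)$ is a complemented subspace of $\wt{L}^p(\Om)^d$ via the Helmholtz projection $\wt{\PP}_p$ of Farwig--Kozono--Sohr, and then invoke Triebel's retraction/coretraction principle (\cite[Section~1.2.4]{Triebel}) to transfer the interpolation pair from $\wt{L}^p(\Om)^d$ down to $\wt{L}^p_\si(\Om)$.

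The main obstacle is this last step. Although $\wt{\PP}_p$ is bounded on $\wt{L}^p(\Om)^d$, it does not in general map $\wt{D}^p(\Om)$ into itself: the Helmholtz potential produced from a function in $\wt{D}^p$ satisfies only a Neumann boundary condition, not the Dirichlet condition built into $\wt{D}^p$, so the retraction-coretraction framework does not apply at the ``smooth'' endpoint off the shelf. This is precisely where the description $D(\wt{A}_p)=\wt{D}^p(\Om)^d\cap \wt{L}^p_\si(\Om)$ -- available here thanks to the uniform $C^{1,1}$-regularity of $\partial\Om$ but unavailable on the bounded Lipschitz domains of Section~\ref{sec:bdd-Lip} -- does the decisive work: it directly identifies the smooth endpoint of the left-hand interpolation pair with the natural intersection appearing on the right-hand side, so the coincidence of the two interpolation spaces follows without any further mapping property of $\wt{\PP}_p$ on $\wt{D}^p(\Om)$. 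This is exactly the sense in which the proof is ``much easier here'' than in the Lipschitz setting, where no such explicit domain description is available.
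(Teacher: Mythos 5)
Your reduction via \eqref{eq:intpol-dom} to the interpolation identity, the easy inclusion by functoriality, and your diagnosis of why the retraction/coretraction route fails (the Helmholtz projection $\wt{\PP}_p$ does not preserve the Dirichlet condition built into $\wt{D}^p(\Om)$) are all sound. But your concluding step is a non sequitur: the endpoint identity $D(\wt{A}_p)=\wt{D}^p(\Om)^d\cap\wt{L}^p_\si(\Om)$ (which holds by the very definition of $\wt{A}_p$) does not imply $\bigl[\wt{L}^p_\si(\Om),D(\wt{A}_p)\bigr]_\theta=\bigl[\wt{L}^p(\Om)^d,\wt{D}^p(\Om)^d\bigr]_\theta\cap\wt{L}^p_\si(\Om)$. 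Complex interpolation does not commute with intersecting a couple by a closed subspace: for a subcouple one always has the embedding ``$\subseteq$'' (your easy half), but the reverse inclusion can fail even when the subspace is complemented at the coarse endpoint and the fine endpoints agree as sets; compatibility of the projection with the whole scale, or some substitute for it, is genuinely needed. That this is not a formality is visible in the paper itself: comparability of fractional domain spaces is the central \emph{hypothesis} of Theorem~\ref{thm:abstract} and Corollary~\ref{cor:abstract}, not an automatic consequence of endpoint equalities. So after you rightly discard the retraction argument, the inclusion $\bigl[\wt{L}^p(\Om)^d,\wt{D}^p(\Om)^d\bigr]_\theta\cap\wt{L}^p_\si(\Om)\subseteq D\bigl((\del+\wt{A}_p)^\theta\bigr)$ --- which is the entire content of the first asserted equality --- is left unproven, and the sentence claiming it ``follows without any further mapping property of $\wt{\PP}_p$'' is circular: the corollary is precisely the statement that the endpoint identity propagates to $\theta\in(0,1)$, and that propagation is where the work lies.

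The paper closes exactly this gap by invoking the arguments of the proof of \cite[Lem.~6]{Giga}, which yield $D\bigl((\del+\wt{A}_p)^\theta\bigr)=D\bigl((\del-\wt{\Delta}_p)^\theta\bigr)^d\cap\wt{L}^p_\si(\Om)$ from the bounded imaginary powers of both operators (available here via Theorem~\ref{thm:qtilde-hinfty} and the $H^\infty$-calculus of the Dirichlet Laplacian); this, not the explicit domain description alone, is what makes the proof go through, and the remark that things are ``much easier here'' than in the Lipschitz setting refers to the availability of the explicit description of $D(\wt{A}_p)$ as an input to Giga's argument, not to the interpolation identity becoming automatic. Your treatment of the second equality is essentially the paper's in outline, but the two steps you compress also need their supports: consistency of resolvents together with the Calder\'on-type interpolation structure of the $\wt{L}^p$-scale (\cite[Lem.~4.1]{K:hinfty-stokes}, \cite{calderon}) identify $D\bigl((\del-\wt{\Delta}_p)^\theta\bigr)$ as the sum (for $1<p\le2$) or intersection (for $2<p<\infty$) of $D\bigl((\del-\Delta_p)^\theta\bigr)$ and $D\bigl((\del-\Delta_2)^\theta\bigr)$, and the identification $D\bigl((\del-\Delta_p)^\theta\bigr)=\bigl[L^p(\Om),D^p(\Om)\bigr]_\theta=H^{2\theta,p}_0(\Om)$ is obtained by localization in the spirit of Seeley, see the proof of \cite[Cor.~1.2]{K:hinfty-stokes}. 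To repair your write-up, replace your final paragraph by an appeal to (or a reproduction of) the argument of \cite[Lem.~6]{Giga}.
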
 

\begin{proof}
The first equality can be  shown by the arguments in the proof of \cite[Lem. 6]{Giga}. By consistency of resolvents we have
$$
  D\big((\del-\wt{\Delta}_p)^\theta\big)
  =\left\{\begin{aligned} &D\big((\del-\Delta_p)^\theta\big)+ D\big((\del-\Delta_2)^\theta\big), && 1<p\le 2, \\
  &D\big((\del-\Delta_p)^\theta\big)\cap D\big((\del-\Delta_2)^\theta\big), && 2<p<\infty,
  \end{aligned}\right.
$$
and by boundedness of the $H^\infty$-calculus for $\mu+\Delta_p$ we have $D((\mu-\Delta_p)^\theta)=[L^p(\Om),D^p(\Om)]_\theta$.
These interpolation spaces may be determined by localization, see \cite[proof of Cor.~1.2]{K:hinfty-stokes}, and this gives the result.
\end{proof}

\subsection{Relative $L^p$-theory}
Assume that $\Om\subseteq\R^d$ is a uniform $C^3$-domain and that $p\in(1,\infty)$ is such that the Helmholtz decomposition
\begin{align}\label{eq:Lq-HHD}
 L^p(\Om)^d=L^p_\si(\Om)\oplus G^p(\Om)
\end{align}
holds, i.e., that the Helmholtz projection $\PP$ acts boundedly in $L^p(\Om)^d$. It was shown in \cite[Prop. 2.19]{GK} that the set of all such $p\in(1,\infty)$ is an interval containing $2$ and closed under taking dual exponents, and that corresponding Helmholtz projections are consistent. For $p$ in this interval, define the Stokes operator $A_p$ in $L^p_\si(\Om)$ by
$$
 A_pu:=-\PP_p\Delta u,\quad u\in D(A_p):=W^{2,p}(\Om)^d\cap W^{1,p}_0(\Om)^d\cap L^p_\si(\Om).
$$
Under these assumptions,
Gei{\ss}ert, Heck, Hieber, and Sawada showed in \cite[Thm. 1.1]{GHHS} that there exists $\mu\ge0$ such that the operator $\mu+A_p$ has maximal $L^q$-regularity in $L^p_\si(\Om)$ for $1<q<\infty$. By Lutz Weis' theorem mentioned in Subsection~\ref{sub:R-sec} this means that we have the following.

\begin{prop}\label{prop:GHHS-Rsec} 
Let $\Om\subseteq\R^d$ be a uniform $C^3$-domain and $p\in(1,\infty)$ such that \eqref{eq:Lq-HHD} holds.
Then there exists $\mu\ge0$ such that $\mu+A_p$ is $R$-sectorial in $L^p_\si(\Om)$ of some angle $<\frac\pi2$.
\end{prop}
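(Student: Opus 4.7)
The approach is to deduce this directly from the equivalence, due to Lutz Weis, between maximal $L^q$-regularity and $R$-sectoriality of angle $<\pi/2$ for sectorial operators on $UMD$-spaces, as recalled in Subsection~\ref{sub:R-sec}. The deep input is the main theorem of \cite{GHHS}, which furnishes some $\mu\ge 0$ such that $\mu+A_p$ has maximal $L^q$-regularity in $L^p_\sigma(\Omega)$ for $1<q<\infty$. Given this, the conclusion is essentially a matter of assembling the right citations.

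The hypotheses of Weis' theorem must be verified. First, the ambient space $L^p_\sigma(\Omega)$ is $UMD$, since it is a closed subspace of $L^p(\Omega)^d$ and the $UMD$-property is inherited by closed subspaces, while $L^p(\Omega)^d$ is $UMD$ for $1<p<\infty$. Second, one needs that $\mu+A_p$ is sectorial of some angle strictly less than $\pi/2$. This is itself a byproduct of maximal $L^q$-regularity, for instance via Dore's theorem: an operator with maximal $L^q$-regularity on the half line with zero initial value generates a bounded analytic semigroup, which is equivalent to sectoriality of angle $<\pi/2$. Alternatively, and more directly, \cite{GHHS} establishes analyticity of the semigroup generated by $-\mu-A_p$ as part of their argument.

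With both hypotheses in place, Weis' characterization upgrades the boundedness of $\{\lambda R(\lambda,-\mu-A_p):\lambda\in\C\setminus\Sigma_\theta\}$ on every proper subsector containing the left half-plane to $R$-boundedness, which is precisely the $R$-sectoriality statement of the proposition. I do not anticipate a real obstacle in this proof; the only minor point of care is to ensure that the same shift $\mu$ serves both purposes (sectoriality and the Weis equivalence), but this is harmless since the conclusion allows $\mu$ to be taken as large as needed, and \cite{GHHS} guarantees existence of such a $\mu$.
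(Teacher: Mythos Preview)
Your proposal is correct and follows exactly the paper's approach: the proposition is stated immediately after noting that \cite[Thm.~1.1]{GHHS} yields maximal $L^q$-regularity for some shift $\mu+A_p$, and the paper simply invokes Weis' characterization from Subsection~\ref{sub:R-sec} to conclude $R$-sectoriality of angle $<\frac\pi2$. Your additional remarks on verifying the UMD property and sectoriality are accurate and, if anything, more detailed than the paper's one-line justification.
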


Again, we can use the mapping properties of the Helmholtz projection in $L^2(\Om)^d$ from \cite[Lem. 4.3]{K:hinfty-stokes} for an application of Corollary~\ref{cor:abstract} via Remark~\ref{rem:Cor-sa} and obtain the following, originally proved in \cite[Thm. 1.1]{GK}.

\begin{theorem}\label{thm:GHHS-Hinfty} 
Let $\Om\subseteq\R^d$ be a uniform $C^3$-domain and $p_0\in(1,\infty)$ such that \eqref{eq:Lq-HHD} holds for $p_0$.
Then there exists $\mu\ge0$ such that $\mu+A_p$ has a bounded $H^\infty$-calculus in $L^p_\si(\Om)$ for all $p\in(\min\{p_0,p_0'\},\max\{p_0,p_0'\})$.
\end{theorem}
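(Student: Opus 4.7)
The plan is to apply Corollary~\ref{cor:abstract} in the self-adjoint-projection form sharpened by Remark~\ref{rem:Cor-sa}, with the Helmholtz projection as the consistent family of projections $P_p := \PP_p$ (so that $X_p = L^p_\si(\Om) = \ran\PP_p$), with comparison family $B_p := \mu - \Delta_{D,p}$ the shifted Dirichlet Laplacian on $L^p(\Om)^d$, and with the shifted Stokes operator $\mu + A_p$ in the role of $A_p$. Since $\Om$ is a uniform $C^3$-domain, the Dirichlet Laplacian generates a semigroup with Gaussian upper bounds, hence $\mu - \Delta_{D,p}$ has a bounded $H^\infty$-calculus on $L^p(\Om)^d$ for every $p\in(1,\infty)$, and consistency of this family is standard. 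By \cite[Prop.~2.19]{GK} the set of $p\in(1,\infty)$ for which \eqref{eq:Lq-HHD} holds is an interval containing $2$ and closed under duality, so it contains $I:=[\min\{p_0,p_0'\},\max\{p_0,p_0'\}]$; the associated projections $\PP_p$ are consistent, and $\PP_2$ is the orthogonal projection in $L^2(\Om)^d$, hence self-adjoint. The $R$-sectoriality of $\mu+A_p$ on $L^p_\si(\Om)$ required by the Corollary is supplied by Proposition~\ref{prop:GHHS-Rsec} at each $p\in I$, after possibly enlarging $\mu$.

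The remaining, and only nontrivial, hypothesis is the compatibility condition \eqref{eq:A2-B2-P2}. Here Remark~\ref{rem:Cor-sa} is decisive: both $\mu+A_2$ and $\mu-\Delta_{D,2}$ are associated with closed non-negative symmetric sesquilinear forms in $L^2_\si(\Om)$ and $L^2(\Om)^d$, namely $(u,v)\mapsto \int_\Om\nabla u\cdot\overline{\nabla v}\,dx + \mu\int_\Om u\cdot\overline{v}\,dx$ with form domains $H^1_{0,\si}(\Om)$ and $H^1_0(\Om)^d$ respectively. By the result of Kato cited in Remark~\ref{rem:Cor-sa} we need only verify \eqref{eq:A2-B2-P2} for one single $\al\in(0,\tfrac12)$. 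For such $\al$, standard form interpolation between $L^2$ and the respective form domain yields
\begin{align*}
 D\bigl((\mu-\Delta_{D,2})^\al\bigr) = H^{2\al}(\Om)^d\quad\mbox{and}\quad D\bigl((\mu+A_2)^\al\bigr) = H^{2\al}(\Om)^d\cap L^2_\si(\Om),
\end{align*}
so \eqref{eq:A2-B2-P2} reduces to the single identity $\PP\bigl(H^{2\al}(\Om)^d\bigr) = H^{2\al}(\Om)^d\cap L^2_\si(\Om)$, i.e.\ to the boundedness of the $L^2$-Helmholtz projection on small-order fractional Sobolev spaces together with the correct description of its range. This is exactly the content of \cite[Lem.~4.3]{K:hinfty-stokes}, where the $C^3$-regularity of $\partial\Om$ is used.

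All hypotheses being in place, one application of Corollary~\ref{cor:abstract} at the endpoint $p_0$ yields the bounded $H^\infty$-calculus of $\mu+A_p$ on $L^p_\si(\Om)$ for every $p\in I\sm\{p_0\}$, hence for all $p$ strictly between $2$ and $p_0$ inclusive on the side of $2$; a second application with $p_0$ replaced by $p_0'$ covers all $p$ strictly between $2$ and $p_0'$. Together these two applications give the full open interval $(\min\{p_0,p_0'\},\max\{p_0,p_0'\})$, which is the claim. The step I expect to require the most care is the matching of fractional domain spaces $D((\mu+A_2)^\al) = H^{2\al}(\Om)^d\cap L^2_\si(\Om)$ together with $\PP(H^{2\al}(\Om)^d) = H^{2\al}(\Om)^d\cap L^2_\si(\Om)$ for small $\al>0$; this is where the $C^3$-regularity of the boundary enters in an essential way, and where \cite[Lem.~4.3]{K:hinfty-stokes} is invoked. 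Everything else is abstract machinery already packaged in Corollary~\ref{cor:abstract} and Remark~\ref{rem:Cor-sa}.
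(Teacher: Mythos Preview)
Your proposal is correct and follows precisely the paper's own argument: apply Corollary~\ref{cor:abstract} via Remark~\ref{rem:Cor-sa} with $B_p=\mu-\Delta_{D,p}$, use Proposition~\ref{prop:GHHS-Rsec} for $R$-sectoriality, and invoke \cite[Lem.~4.3]{K:hinfty-stokes} for the $L^2$ fractional-domain compatibility \eqref{eq:A2-B2-P2}. One minor inaccuracy: the $C^3$-regularity is not needed for \cite[Lem.~4.3]{K:hinfty-stokes} (which already holds for uniform $C^{1,1}$-domains, cf.\ the $\wt{L}^p$-discussion above) but rather enters through the Gei{\ss}ert--Heck--Hieber--Sawada result behind Proposition~\ref{prop:GHHS-Rsec}.
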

 
The proof given in \cite{GK} used the result from \cite{KuW:erratum}. Hence, the additional condition on mapping properties of the Helmholtz projection in $L^q$ had to be verified. Thus, the proof given here is simpler.

It was conjectured by M. Geissert that the assertion of \cite[Thm. 1.1]{GHHS} (and hence also of Proposition~\ref{prop:GHHS-Rsec}) remains valid if $\Om$ is just a uniform $C^2$-domain or maybe just a uniform $C^{1,1}$-domain. In this case and with the given proof, we would obtain the same for the assertion of Theorem~\ref{thm:GHHS-Hinfty}. 

\begin{corollary}\label{eq:GHHS-fract}
Under the assumptions of Theorem~\ref{thm:GHHS-Hinfty} we have, for $q\in(\min\{p_0,p_0'\},\max\{p_0,p_0'\})$, 
$\theta\in(0,1)$, and for the $\mu$ from Theorem~\ref{thm:GHHS-Hinfty},
$$
 D\big((\mu+A_p)^\theta\big)=D\big((\mu-\Delta_p)^\theta\big)^d\cap L^p_\si(\Om)=H^{2\theta,p}_0(\Om)^d\cap L^p_\si(\Om),
$$
where $\Delta_p$ denotes the Dirichlet Laplacian in $L^p(\Om)$ with domain $D^p(\Om)$. 
\end{corollary}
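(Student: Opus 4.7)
The plan is to mirror the proof of Corollary~\ref{cor:qtilde-fract}, adapted to the $L^p_\si$-setting. First, I would identify the Dirichlet Laplacian's fractional domain spaces. Since $\Om$ is a uniform $C^3$-domain, the Dirichlet Laplacian $\Delta_p$ has a bounded $H^\infty$-calculus on $L^p(\Om)$ for every $p\in(1,\infty)$, obtainable by localization and reduction to the bent half-space. Via Subsection~\ref{sub:BIP}, this yields
\begin{equation*}
 D((\mu-\Delta_p)^\theta) = [L^p(\Om), D^p(\Om)]_\theta,
\end{equation*}
and the identification $[L^p(\Om), D^p(\Om)]_\theta = H^{2\theta,p}_0(\Om)$ is obtained by the same localization argument employed in the proof of \cite[Cor. 1.2]{K:hinfty-stokes}, reducing to the half-space where the characterization of interpolation spaces between $L^p$ and $W^{2,p}\cap W^{1,p}_0$ is classical. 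Taking $d$-fold products and intersecting with $L^p_\si(\Om)$ yields the second equality in the statement.

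For the first equality, Theorem~\ref{thm:GHHS-Hinfty} supplies a bounded $H^\infty$-calculus for $\mu+A_p$ in $L^p_\si(\Om)$, so
\begin{equation*}
 D((\mu+A_p)^\theta) = [L^p_\si(\Om), D(A_p)]_\theta.
\end{equation*}
Using the description $D(A_p) = D^p(\Om)^d \cap L^p_\si(\Om)$ together with the equivalence of the graph norm of $A_p$ with the $W^{2,p}$-norm on this domain (a consequence of the $L^p$-resolvent estimates underlying \cite[Thm.~1.1]{GHHS}), the task reduces to proving
\begin{equation*}
 [L^p_\si(\Om), D^p(\Om)^d \cap L^p_\si(\Om)]_\theta = [L^p(\Om)^d, D^p(\Om)^d]_\theta \cap L^p_\si(\Om).
\end{equation*}
The inclusion $\subseteq$ is immediate from the natural embedding of interpolation couples, noting that $L^p_\si(\Om)$ is closed in $L^p(\Om)^d$. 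For the reverse inclusion I would follow the argument of \cite[Lem.~6]{Giga}, representing fractional powers via resolvent integrals and exploiting the boundedness of the Helmholtz projection $\PP_p$ on $L^p(\Om)^d$ (available because $p\in(\min\{p_0,p_0'\},\max\{p_0,p_0'\})$) together with the identity $(\mu+A_p)u = \PP_p(\mu-\Delta_p)u$ on $D(A_p)$ and the bounded imaginary powers of both operators.

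The main obstacle is that $\PP_p$ does not commute with $\Delta_p$ and, in particular, does not preserve $D^p(\Om)^d$: for $u\in D^p(\Om)^d$, the gradient component $(I-\PP_p)u$ generally fails to vanish on $\partial\Om$, so the tangential component of $\PP_p u$ on the boundary need not vanish either. Consequently, the textbook ``interpolation of a complemented subspace via a commuting projection'' argument does not apply directly, and one must appeal to Giga's more refined resolvent-integral reasoning, which requires only the boundedness of $\PP_p$ on the base space $L^p(\Om)^d$ together with bounded $H^\infty$-calculi on each side, both already at our disposal.
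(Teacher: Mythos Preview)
Your proposal is correct and follows essentially the same route as the paper: the paper merely remarks that the proof is simpler than that of Corollary~\ref{cor:qtilde-fract}, which in turn invokes the argument of \cite[Lem.~6]{Giga} for the first equality and the bounded $H^\infty$-calculus of the Dirichlet Laplacian together with the localization argument from \cite[proof of Cor.~1.2]{K:hinfty-stokes} for the second. The simplification over Corollary~\ref{cor:qtilde-fract} is precisely that one works directly in $L^p$ rather than in the sum/intersection spaces $\wt{L}^p$, so the consistency-of-resolvents step is unnecessary; your discussion of the ``main obstacle'' and its resolution via Giga's resolvent-integral argument is accurate but more detailed than what the paper spells out.
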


The proof is in fact simpler than the proof of Corollary~\ref{cor:qtilde-fract} given above.


\small\normalsize\small

\end{document}